\newtheorem{lemma}{Lemma}
\newtheorem{corollary}{Corollary}
\newtheorem{theorem}{Theorem}
\newtheorem{proposition}{Proposition}
\theoremstyle{definition}
\newtheorem{remark}{Remark}
\newcommand{\diff}{\mathop{}\!\mathrm{d}}
\newcommand{\cC}{\mathscr{C}}
\title{
Boundary Value Problems for a Special Helfrich Functional for 
\\Surfaces of Revolution\\
-- Existence and asymptotic behaviour --
}
\author{Klaus Deckelnick\footnote{Fakult\"at f\"ur Mathematik, Otto-von-Guericke-Universit\"at Magdeburg, Postfach 4120, D-39016 Magdeburg, Germany} \and Marco Doemeland\footnote{Math 111810, RWTH Aachen, D-52056 Aachen, Germany} \and Hans-Christoph Grunau{\footnotesize *}
}
\date{\today}
\begin{document}

\maketitle
%F1
%---------------------------------------------------

\begin{abstract}
	The central object of this article is (a special version of)
	the Helfrich functional which is the sum of the Willmore  functional and the area functional times a weight factor $\varepsilon\ge 0$. We collect several results concerning the existence of solutions to a Dirichlet boundary value problem for Helfrich surfaces of revolution and cover some specific regimes of boundary conditions and weight factors $\varepsilon\ge 0$. These results are obtained with the help of different techniques like an energy method, gluing techniques and the use of the implicit function theorem close to Helfrich cylinders. In particular, concerning the regime of boundary values, where a catenoid exists as a global minimiser of the area functional, existence of minimisers of the Helfrich functional is established for \emph{all} weight factors $\varepsilon\ge 0$. For the singular limit of weight factors $ \varepsilon \nearrow \infty  $ they  converge uniformly to the catenoid which minimises the surface area in the class of surfaces of revolution.
\end{abstract}

%=======================================================================
\section{Introduction}

The Helfrich energy for a sufficiently  smooth (two dimensional) surface $ S \subset \mathbb{R} ^3 $ (with or without boundary), introduced by Helfrich in \cite{Helfrich} and Canham in \cite{Canham}, is defined by
$$
{\mathscr H}(S) := \int_{S} (H-H_0)^2 \,  \diff S - \gamma \int_S K \,  \diff S + \varepsilon \int_{S} \, \diff S \, .
$$
Here the integration is done with respect to the usual $ 2 $-dimensional area measure $ \diff S $, $ H $ is the mean curvature of $ S $, i.\,e. the mean value of the principal curvatures, $ K $ is the Gaussian curvature, $ \gamma  \in \mathbb {R} $ is a constant bending rigidity, $ \varepsilon  \ge 0$ the weight factor of the area functional and $ H_0 \in \mathbb {R}  $ a given spontaneous curvature. For simplicity we will always assume $ H_0=0 $ so that the first term in the above energy reduces to the well--known Willmore functional $\int_S H^2 \, \diff S$. We expect nontrivial $H_0\not=0$ to result in completely different geometric phenomena; we think that our methods cannot directly be extended to this case. 
For the investigation of closed surfaces in the case $H_0\not=0$ one may see  e.g. \cite{BernardWheeler,ChoksiVeneroni}.
We also choose $ \gamma =0 $ since $ \int_ {S}K \, \diff S $ in the second term will only contribute to boundary terms by the Gauss-Bonnet theorem, which are constant thanks to the given boundary conditions. The Euler-Lagrange equation to the resulting Helfrich energy is known as the Helfrich equation and is given by
\begin{align*}
	\Delta  _ { S } H  + 2H \, (H ^2- K) - 2 \varepsilon  H  \, = \,  0 \qquad  \mbox{ on }   S ,
\end{align*}
where $ \Delta _S $ denotes the Laplace-Beltrami operator on $ S $.

There are several applications of the Helfrich energy in e.\,g. biology in modelling red blood cells or lipid bilayers (see e.\,g. \cite{Canham,Helfrich,Nitsche,Ou-Yang}) and in modelling thin elastic plates (see e.\,g. \cite{Germain}). As noted in \cite{dLPR,Nitsche}, the Willmore functional 
was already considered in the early 19th century (see \cite{Poisson,Germain})
and has been periodically reintroduced according to the availability of more
powerful mathematical tools e.g. in the early 1920s (see \cite{Thomsen}) 
and most successfully by Willmore (see  \cite{Wil65}) in the 1960s.
So far, mathematical research has mainly focused on closed (i.e. compact without boundary) surfaces minimising  the Willmore functional, see e.\,g. \cite{Simon,BauerKuwert,MarquesNeves} and references therein. Some results on closed surfaces minimising the Helfrich energy under fixed area and enclosed volume for the axisymmetric case can be found in \cite{ChoksiVeneroni}.
In \cite{CMV} this was generalised to a multiphase Helfrich energy, while recently in \cite{BLS} the same problem was studied in the setting of curvature varifolds, thereby dropping the symmetry assumptions.

In contrast to closed surfaces we are here interested in a Dirichlet boundary value problem for minimising the Helfrich energy. Some existence results on the Dirichlet problem for the Willmore functional can be found e.\,g. in \cite{Schaetzle} for a class of branched immersions in $\mathbb{R}^3\cup \{\infty\}$. 
The Douglas (or Navier) boundary value problem was studied in \cite{NovagaPozzetta,Pozzetta}.
In the class of surfaces of revolution existence results for the Willmore functional were obtained in \cite{DDG,DFGS,EichGr}; see also references therein. In \cite{DGR} existence of minimisers of a relaxed Willmore functional in the class of graphs over two-dimensional domains was proved. For 
the Helfrich functional an existence result for branched immersions in $\mathbb{R}^3$ was found in \cite{Eichmann}. This is somehow related to \cite{dLPR}, where an area constraint was imposed in order to minimise the Willmore functional. 
 \\ \-

 In this paper we focus on surfaces of revolution $ S $
\begin{align*}
	(x,\theta)\mapsto  \big(x,u(x)\cos \theta, u(x)\sin \theta \big) \, ,
\quad  x\in[-1,1],~\theta\in[0,2\pi],
\end{align*}
for some sufficiently smooth profile curve $ u \colon [-1,1] \rightarrow (0, \infty ) $. We then consider the Dirichlet boundary value problem for Helfrich surfaces of revolution
\begin{align}
	\label{eq:helfrich_dbvp}
	\left\{~
	\begin{aligned}
		\Delta  _ { S } H  + 2H \, (H ^2- K) - 2 \varepsilon  H  \, &= \,  0 \qquad  \mbox{ in }   (-1,1), \\
		u(\pm 1) \,=\, \alpha, \quad  u'(\pm 1)\, & = \, 0 \, ,
	\end{aligned}
	\right.
\end{align}
for a given boundary value $ \alpha >0 $ and for profile curves $ u $ that are even, i.\,e. $ u(x) = u(-x) $. The precise setting and a derivation of (\ref{eq:helfrich_dbvp}) will be given in Section \ref{helfrichfunctional}. 
First existence results for (\ref{eq:helfrich_dbvp}) have been obtained in \cite{Scholtes} and \cite{Doemeland}. The goal of this paper is to  extend these
results in several directions including a description of  the qualitative behaviour of solutions when possible.
%In \cite{EK} it was shown, that  Willmore minimisers in the class of surfaces of revolution have even profile curves. Our hope is that this symmetry result stays true also for minimisers of the Helfrich functional. 
Depending on the two parameters $ \alpha  $ and $ \varepsilon  $, Figure \ref{fig:domains_of_existence}
\begin{figure}[h]
	\centering
	\definecolor{red}{RGB}{210,34,16}
	\definecolor{blue}{RGB}{31,96,205}
	\definecolor{green}{RGB}{6,151,31}
	\definecolor{orange}{RGB}{251,123,22}
	\definecolor{light blue}{RGB}{25,212,221}
	\definecolor{purple}{RGB}{122,68,179}
	\newcommand{\domaincolor}[1]{\textbf{\textcolor{#1}{\underline{#1}}}}
	\includegraphics[]{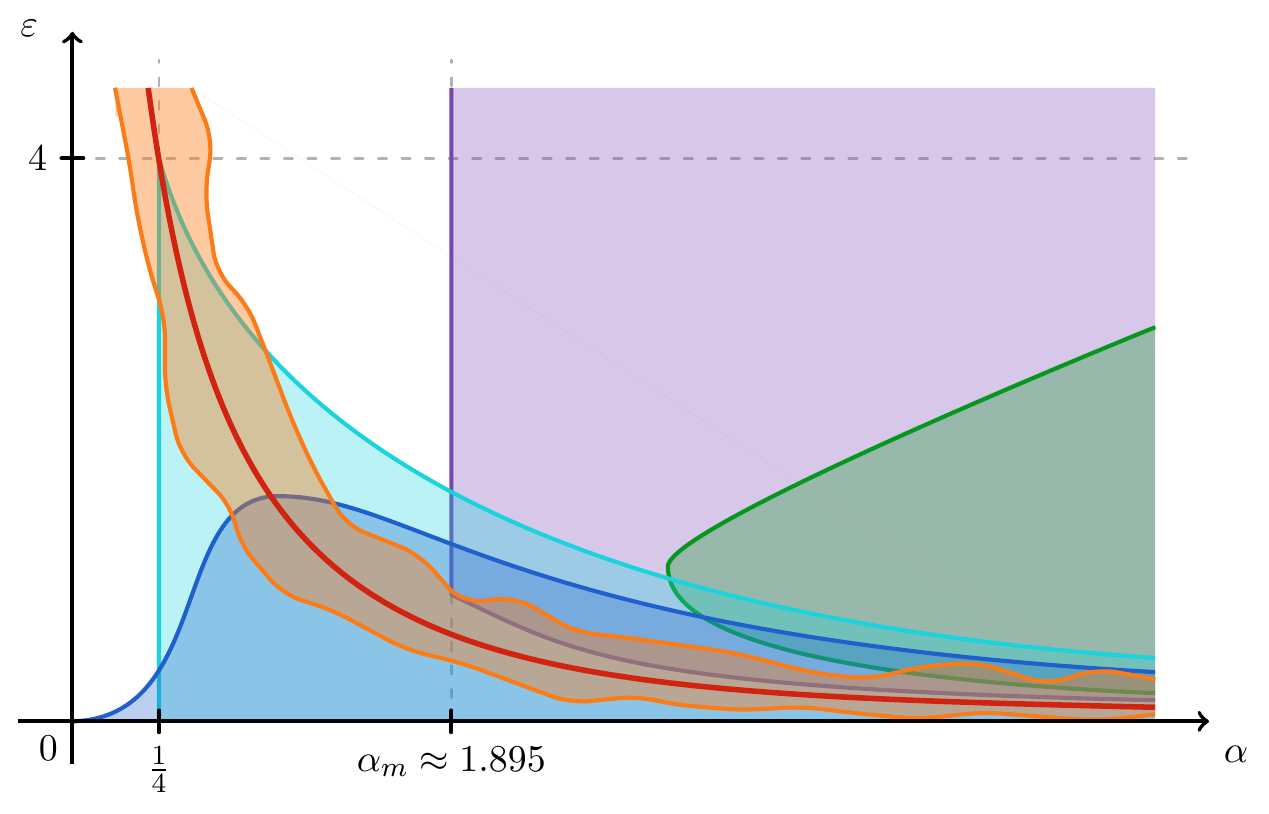}
	\caption{A sketch of domains for existence of solutions to the Dirichlet problem \eqref{eq:helfrich_dbvp} depending on $ (\alpha ,\varepsilon ) $: The domains \domaincolor{blue} and \domaincolor{light blue} show existence of Helfrich minimisers  obtained in Theorem \ref{thm:existence_energy_method} and \cite[Satz\;5.3.3\,and\;5.3.4]{Doemeland}. The \domaincolor{red} curve where $ \varepsilon = \varepsilon _ \alpha = \frac{1}{4 \alpha ^2} $ displays the Helfrich cylinders (see \cite[Lemma\;4.1]{Scholtes}). The \domaincolor{orange} domain sketches existence of solutions obtained by perturbing the Helfrich cylinders (see Theorem \ref{thm:exist_implicit_fct}).  The \domaincolor{purple} domain is obtained with the help of gluing techniques (see Theorem \ref{thm:existence_conv_to_cat}) and may be compared to the \domaincolor{green} domain implicitly found in \cite[Theorem\;4.14]{Scholtes}.}
	\label{fig:domains_of_existence}
\end{figure}
gives an overview of domains on which existence of solutions to the Helfrich problem \eqref{eq:helfrich_dbvp} is guaranteed. Let us describe these domains in more detail and relate them to
the various sections of this paper. 

As a starting point we consider in Section \ref{subsec:exist_energy_bounds}  the Helfrich boundary value problem as a \emph{regular} perturbation of the Willmore
boundary value problem for small  $\varepsilon$. Note however that a direct application of the techniques developed in  \cite{DDG} for the Willmore functional is not possible because they are heavily based on its conformal invariance and on explicitly known Willmore minimisers. Nevertheless, by applying the direct method  and  utilising an energy gap of the Willmore energy to $ 4 \pi  $ we find in Theorem~\ref{thm:existence_energy_method} solutions of \eqref{eq:helfrich_dbvp} for any 
$\alpha>0$ and $\varepsilon\ge 0$ not too large, see the blue regions in
Figure~\ref{fig:domains_of_existence}. In this case we solely give a quantitative existence proof with no qualitative description of the minimisers. 

In order to develop some intuition for what happens when $\varepsilon$ increases we then concentrate in Section~\ref{sec:perturbation_cylinder}  on the local picture around a Helfrich cylinder, i.e. 
 $ u(x) \equiv \alpha  $, which 
is an explicit solution and the unique Helfrich minimiser in the case $ \varepsilon = \varepsilon _ \alpha := \frac{1}{4 \alpha ^2} $ (see \cite[Lemma\;4.1]{Scholtes}; red curve in Figure \ref{fig:domains_of_existence}). 
Linearising (\ref{eq:helfrich_dbvp}) for fixed $ \alpha  $  at the corresponding Helfrich 
cylinder, the implicit  function theorem yields the  existence of a smooth 
family $(u_ \varepsilon )_{\varepsilon\approx \varepsilon _ \alpha }$ of solutions of  \eqref{eq:helfrich_dbvp}, see Theorem \ref{thm:exist_implicit_fct}; orange domain in Figure \ref{fig:domains_of_existence}. As a first step
in order to understand whether and how this local branch  is part of a 
possibly existing family of solutions $(u_ \varepsilon )_{\varepsilon\ge 0}$ we consider 
the rate of change function $\mathit{rc}_\alpha:=  \frac{\partial u_ \varepsilon }{\partial \varepsilon } |_ {\varepsilon =  \varepsilon _ \alpha } $, for which we derive an explicit formula. A careful analysis of the resulting
expression shows that $\mathit{rc}_\alpha$ is negative, see Theorem \ref{thm:change_of_rate_negative}, which 
gives rise to the (open) conjecture that the family 
$(u_ \varepsilon )_{\varepsilon\in [0,\varepsilon _ \alpha]}$ exists and is
decreasing with respect to $\varepsilon$. Furthermore, we find that the shape of $\mathit{rc}_\alpha$ strongly depends  on the boundary value $ \alpha   $. More precisely, there exists a critical value $\alpha _ \text{crit} \approx 0.18008$
such that for $ \alpha > \alpha _ \text{crit}$ the function $\mathit{rc}_\alpha$ is strictly increasing on $ [0,1] $ (see Theorem \ref{thm:change_of_rate_oscillation}), while for $ \alpha < \alpha _ \text{crit}   $ and $ \alpha \searrow 0 $ an increasing number of oscillations around a negative value show up. This interesting phenomenon 
of ``overshoot'' and subsequent oscillations around an expected ``attracting state'' has often been observed in numerical experiments and to our knowledge this is the first time where an analytical proof is given. It indicates that 
for small $\alpha>0$, understanding existence, qualitative properties and 
asymptotic behaviour for the full range of $\varepsilon\in[0,\infty)$ remains 
an interesting and  challenging open problem.

Next, in Section~~\ref{subsec:exist_gluing} we consider the case of larger values of $\varepsilon$. This  means that the area functional as part of the Helfrich energy is given a bigger weight suggesting that minimisers of the area functional in the class of surfaces of
revolution might be useful when minimising the Helfrich funtional in this case. It is well--known that there exists a threshold
$ \alpha _m \approx 1.805 $ such that  for $ \alpha > \alpha _m $ the minimiser is given by a catenoid while for $ \alpha < \alpha _m $ it is a so-called Goldschmidt solution. Using this observation in the case $\alpha \geq \alpha_m$ 
we are able  to decrease the energy of minimising sequences for the Helfrich functional by gluing in suitable catenoids leading to the existence of minimisers, see Theorem \ref{thm:existence_conv_to_cat} and the
purple region in Figure \ref{fig:domains_of_existence}.
At the same time this process gives some qualitative information about these minimisers. In addition, we prove in Corollary \ref{cor:existence_conv_to_cat} that the sequence of  minimisers converges to the area-minimising catenoid
in the singular limit  $ \varepsilon \nearrow \infty  $. So far, there is only numerical evidence that 
for $0< \alpha < \alpha _m$ minimisers may exist for any $\varepsilon\ge 0$
and converge in some singular sense to the Goldschmidt minimal surface as $ \varepsilon \nearrow \infty  $.

In Section \ref{sec:summary} we finally summarise our results and describe some open problems. At the end we add two appendices. Appendix~\ref{sec:divergence_form_helfrich} proves a divergence form of the Helfrich equation and Appendix~\ref{sec:appendix_est_oscill} collects the basic estimates to prove the theorems of Section~\ref{sec:perturbation_cylinder}.

%---------------------------------------------------

\section{Geometric background for surfaces of revolution}
\label{helfrichfunctional}
%---------------------------------------------------

For a sufficiently smooth (two dimensional) surface $S \subset \mathbb{R}^3$ 
(with or without boundary)
we consider the Helfrich functional (see e.g. \cite{Helfrich,Nitsche}) 
$$
{\mathscr H}(S) := \int_{S} H^2 \, \diff S - \gamma \int_S K \, \diff S + \varepsilon \int_{S} \, \diff S \, .
$$
Here $\diff  S$ denotes integration with respect to the surface area measure, while
$$
H(x):= \frac{1}{2} \bigl( \kappa_1 (x) + \kappa_2 (x) \bigr) \quad \mbox{ and } \quad  K(x) := \kappa_1(x) \kappa_2(x)
$$
are the mean curvature and the Gauss curvature of $S$ respectively. 
Furthermore, $\gamma \in \mathbb{R}$ as well as $\varepsilon \geq 0$ are given constants. In what follows we consider surfaces of revolution $S$
\begin{align*}
	(x,\theta)\mapsto  \big(x,u(x)\cos \theta, u(x)\sin \theta \big) \, , \quad  x\in[-1,1],~\theta\in[0,2\pi],
\end{align*}
which arise when the sufficiently smooth profile curve
$
u:[-1,1]\to (0,\infty)
$
is rotated around the $x$-axis. 
Expressing $H$ and $K$ in terms of $u$ yields
\begin{align*}
H(x) &= \frac{1}{2} \Bigg(  \frac{1}{u(x)\, \sqrt{ 1+ u'(x)^2 } } - \frac{u''(x)}{ {(1+u'(x)^2)}^{3/2}} \Bigg) \, , \\
K(x) &= -\dfrac{u''(x)}{u(x) {(1 + u'(x)^2)}^2} \, .  
\end{align*}
Since $\diff S =  u(x) \sqrt{1+u'(x)^2}\, \diff x\, \diff \theta$ we have for the surface area
\begin{equation} \label{surfacearea}
\mathscr A(u)\, := \, 2 \pi \int_{-1}^1 u(x) \sqrt{1+ u'(x)^2} \; \diff x
\end{equation}
while the Willmore functional is given by
\begin{equation}  \label{def:Willmore_rev}
{\mathscr W}(u) \,:=\,  \dfrac{\pi}{2} \int^1_{-1} \left( \dfrac{1}{u(x) \sqrt{1+u'(x)^2}} - \dfrac{u''(x)}{{(1+u'(x)^2)}^{3/2}} \right)^2 u(x) \sqrt{1 + u'(x)^2} \; \diff x \, .
\end{equation}
In what follows we shall consider for $\alpha>0$  the following class of admissible functions:
\begin{equation}\label{eq:adm_fctns}
N_{\alpha}:=\{ u\in H^2 (-1,1)\, :\, u \mbox{ is even}, u>0 \mbox{ in } [-1,1],  u(\pm 1)=\alpha,u'(\pm 1)=0\}.
\end{equation} 
Note  that for $u \in N_\alpha$
\begin{equation}  \label{intK}
\int_S K \, \diff  S \, = \,  - 2 \pi \int_{-1}^1  \frac{u''(x)}{{( 1+ u'(x)^2 )}^{3/2}} \, \diff x 
\, = \,  - 2 \pi \left[ \frac{u'(x)}{\sqrt{1+ u'(x)^2}} \right]^{x=1}_{x=-1} \, = \, 0 \, ,
\end{equation}
so that in our setting the Helfrich functional $\mathscr H = \mathscr H_\varepsilon: N_\alpha \rightarrow \mathbb{R}$ takes the form
\begin{equation} \label{def:Helfrich_rev}
{\mathscr H}_\varepsilon(u) := \mathscr W(u) + \varepsilon \mathscr A(u).
\end{equation}
Using the fact that  $\frac{u''}{(1+(u')^2)^{3/2}}= \frac{\diff }{\diff x} \bigl( \frac{u'}{\sqrt{1+(u')^2}} \bigr)$ we find that
\begin{eqnarray}
\lefteqn{ \hspace{1cm} \int_a^b \Bigl( \frac{1}{ u(x)\,\sqrt{ 1+ u'(x)^2 } } -  \frac{u''(x)}{{(1+ u'(x)^2)}^{3/2}} \Bigr)^2 u(x) \sqrt{1+u'(x)^2} \, \diff x } \label{equiv}  \\
&  = & \   \int_a^b \left[ \bigl( \frac{1}{u(x) \sqrt{1+u'(x)^2}} \bigr)^2 + \bigl( \frac{u''(x)}{{(1+u'(x)^2)}^{3/2}} \bigr)^2 \right] u(x) \sqrt{1+u'(x)^2} \, \diff x
- 2 \frac{u'(x)}{\sqrt{1+u'(x)^2}} \Big|^b_a  \nonumber   \\
&  = &  \int_a^b \Bigl( \frac{1}{ u(x)\,\sqrt{ 1+ u'(x)^2 } } +  \frac{u''(x)}{{(1+ u'(x)^2)}^{3/2}} \Bigr)^2 u(x) \sqrt{1+u'(x)^2}  \,  \diff x - 4 \frac{u'(x)}{\sqrt{1+u'(x)^2}} \Big|^b_a. \nonumber
\end{eqnarray}
In particular we have for $u \in N_\alpha$ that
\begin{equation} \label{eq:helfrich_functional1}
 \mathscr H_\varepsilon(u) =  \frac{\pi}{2} \int_ {-1}^1  \Bigg(\frac{1}{ u(x)\,\sqrt{ 1+ u'(x)^2 } } + 4 \varepsilon \, u(x)\,\sqrt{ 1+ {u'(x)}^2 }\Bigg)\, \diff x   + \frac{\pi }{2} \int_ {-1}^1  
 \frac{u(x) u''(x)^2}{{( 1+ u'(x)^2 )}^{5/2}} \, \diff x. 
\end{equation} 
%
%=& \dfrac{\pi}{2} \int^1_{-1} \left( \dfrac{1}{u(x) \sqrt{1+(u'(x))^2}} - \dfrac{u''(x)}{(1+(u'(x))^2)^{3/2}} \right)^2 u(x) \sqrt{1 + (u'(x))^2} \, dx 
%\nonumber\\
%&+2\pi\varepsilon \int^1_{-1}u(x) \sqrt{1 + (u'(x))^2}\, dx 
Let us next consider the Euler--Lagrange equation for $\mathscr H_\varepsilon$ and fix $u \in N_\alpha \cap C^4([-1,1])$. Then we calculate for the first variation of
$\mathscr A$ at $u$ in direction $\varphi \in H^2_0(-1,1)$
\begin{equation} \label{fva}
\langle \mathscr A'(u), \varphi \rangle = 2 \pi \int_{-1}^1 \left[\varphi(x) \sqrt{1+u'(x)^2} + \frac{u(x) u'(x) \varphi'(x)}{\sqrt{1+ u'(x)^2}} \right] \, \diff x = 4 \pi \int_{-1}^1 u(x) \varphi(x) H(x) \, \diff x
\end{equation}
while \cite[Lemma A.1]{DeckelnickGrunau3} yields
\begin{equation} \label{fvw}
\langle \mathscr W'(u),\varphi \rangle = - 2 \pi \int_{-1}^1 u(x) \varphi(x) \bigl( \Delta_S H + 2 H(H^2 -K) \bigr)(x) \, \diff x.
\end{equation}
Combining (\ref{fva}) and (\ref{fvw}) we see that if $u \in N_\alpha \cap C^4([-1,1])$ is a critical point of the
 energy $\mathscr H_\varepsilon$ then it is a solution of  Helfrich equation
\begin{equation} \label{helfeq}
\Delta  _ { S } H  + 2H \, (H ^2- K) - 2 \varepsilon  H  \, = \,  0 \qquad  \mbox{ in }   (-1,1).
\end{equation}
Using the calculations in \cite[Section 2.1]{DDG} in order to express the left hand side in terms of $u$ we obtain that $u$ is a solution of the following 
Dirichlet problem
\begin{eqnarray}
\lefteqn{ \frac{1}{u(x) \sqrt{1+u'(x)^2}} \, \frac{\diff }{\diff x} \bigg( \frac{u(x)}{\sqrt{1+u'(x)^2}} H'(x)\bigg) } \nonumber \\[2mm]
& & + \frac12 H(x) \bigg(\frac{u''(x)}{{(1+u'(x)^2)}^{3/2}} + \frac{1}{u(x)\sqrt{1+u'(x)^2}} \bigg)^2 - 2 \varepsilon H(x)
 = 0, \quad x\in(-1,1) \label{dirhelf}  \\[3mm]
 & & u(\pm 1)=\alpha, \; u'(\pm 1)=0.  \label{dirbc}
\end{eqnarray}
Note that (\ref{dirhelf}) is a fourth order quasilinear equation, which is elliptic, but not uniformly elliptic. \\
Let us briefly refer to three particular solutions of (\ref{dirhelf}) in the class of symmetric, positive profile curves: \\[2mm]
(i) catenoids: $u(x) = c \cosh(\frac{x}{c}) \; (c>0)$ for every $\varepsilon \geq 0$. \\[2mm]
(ii) spheres: $u(x) = \sqrt{c^2 - x^2} \; (c>1)$ in the case $\varepsilon=0$; \\[2mm]
(iii) cylinders: $u(x) = c>0$ in the case $\varepsilon = \frac{1}{4c^2}$. \\[2mm]
Catenoids and spheres have been successfully employed in the construction of symmetric Willmore surfaces (see e.g. \cite{DDG}, \cite{DFGS}) as well as in the analysis
of their asymptotic shape for small values of $\alpha$, see \cite{G}.

\section{Existence of minimisers via energy bounds} \label{subsec:exist_energy_bounds} %\label{sec:exist_min}

\begin{lemma} \label{uniformbounds}
Let $u \in H^2(-1,1)$ be even. \\
a) If $u'(\pm 1)=0$, then $\mathscr A(u) \, \mathscr W(u) \geq 4 \pi^2$. \\[2mm]
b) If $\mathscr W(u) < 4 \pi$ and $u'(\pm 1)=0$, then 
\begin{displaymath}
\max_{x \in [-1,1]} | u'(x) | \leq \frac{\mathscr W(u)}{\sqrt{16 \pi^2 - \mathscr W(u)^2}}.
\end{displaymath}
c) If $| u'(x) | \leq M, u( \pm 1)= \alpha$, then
\begin{displaymath}
\forall x \in [-1,1]: \qquad \alpha \exp \bigl( -\frac{1}{\pi} M \sqrt{1+M^2} \, \mathscr W(u) \bigr) \leq u(x) \leq \alpha +M.
\end{displaymath}
\end{lemma}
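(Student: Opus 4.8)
The plan is to prove the three parts in order, since b) and c) build on an estimate harvested along the way in a). For \textbf{a)}, I would evaluate identity \eqref{equiv} on $[-1,1]$: because $u'(\pm1)=0$ the boundary term $\tfrac{u'}{\sqrt{1+u'^{2}}}\big|_{-1}^{1}$ vanishes, so its first line reads
$$\frac{2}{\pi}\,\mathscr W(u)=\int_{-1}^{1}\Bigl[\frac{1}{u^{2}(1+u'^{2})}+\frac{u''^{2}}{(1+u'^{2})^{3}}\Bigr]u\sqrt{1+u'^{2}}\,\diff x\;\ge\;\int_{-1}^{1}\frac{\diff x}{u(x)\sqrt{1+u'(x)^{2}}},$$
the last step just discarding the non-negative $u''^{2}$-term. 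Feeding this into the Cauchy--Schwarz inequality,
$$4=\Bigl(\int_{-1}^{1}1\,\diff x\Bigr)^{2}\le\Bigl(\int_{-1}^{1}u\sqrt{1+u'^{2}}\,\diff x\Bigr)\Bigl(\int_{-1}^{1}\frac{\diff x}{u\sqrt{1+u'^{2}}}\Bigr)=\frac{\mathscr A(u)}{2\pi}\int_{-1}^{1}\frac{\diff x}{u\sqrt{1+u'^{2}}}\le\frac{\mathscr A(u)\,\mathscr W(u)}{\pi^{2}},$$
which is exactly $\mathscr A(u)\mathscr W(u)\ge4\pi^{2}$. I would also record here the two by-products (the second by evenness of the integrand) $\int_{-1}^{1}\tfrac{\diff x}{u\sqrt{1+u'^{2}}}\le\tfrac{2}{\pi}\mathscr W(u)$ and $\int_{0}^{1}\tfrac{\diff x}{u\sqrt{1+u'^{2}}}\le\tfrac{1}{\pi}\mathscr W(u)$.

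For \textbf{b)}, set $g:=\tfrac{u'}{\sqrt{1+u'^{2}}}$, which is odd (since $u$ is even), continuous, and satisfies $g(0)=g(\pm1)=0$ with $g'=\tfrac{u''}{(1+u'^{2})^{3/2}}$. Writing $|g'|=\bigl(\tfrac{u\,u''^{2}}{(1+u'^{2})^{5/2}}\bigr)^{1/2}\bigl(\tfrac{1}{u\sqrt{1+u'^{2}}}\bigr)^{1/2}$, Cauchy--Schwarz gives $\int_{-1}^{1}|g'|\,\diff x\le\sqrt{AB}$ with $A:=\int_{-1}^{1}\tfrac{\diff x}{u\sqrt{1+u'^{2}}}$ and $B:=\int_{-1}^{1}\tfrac{u\,u''^{2}}{(1+u'^{2})^{5/2}}\,\diff x$; since $A+B=\tfrac{2}{\pi}\mathscr W(u)$ by the computation in a), AM--GM yields $\int_{-1}^{1}|g'|\,\diff x\le\tfrac{A+B}{2}=\tfrac{1}{\pi}\mathscr W(u)$. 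Now choose $x_{0}\in[0,1]$ with $|g(x_{0})|=\max_{[-1,1]}|g|$ (legitimate, as $|g|$ is even), and, replacing $g$ by $-g$ if necessary, assume $g(x_{0})\ge0$. Using $g(x_{0})=\int_{0}^{x_{0}}g'$ and $g(x_{0})=-\int_{x_{0}}^{1}g'$ and adding,
$$2\,g(x_{0})\le\int_{0}^{x_{0}}|g'|+\int_{x_{0}}^{1}|g'|=\int_{0}^{1}|g'|\,\diff x=\frac{1}{2}\int_{-1}^{1}|g'|\,\diff x\le\frac{1}{2\pi}\,\mathscr W(u),$$
so $\max_{[-1,1]}\tfrac{|u'|}{\sqrt{1+u'^{2}}}=\max|g|\le\tfrac{\mathscr W(u)}{4\pi}$. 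Since $s\mapsto s/\sqrt{1-s^{2}}$ is increasing on $[0,1)$ and $\mathscr W(u)<4\pi$, inverting $t\mapsto t/\sqrt{1+t^{2}}$ gives $\max_{[-1,1]}|u'|\le\tfrac{\mathscr W(u)/(4\pi)}{\sqrt{1-\mathscr W(u)^{2}/(16\pi^{2})}}=\tfrac{\mathscr W(u)}{\sqrt{16\pi^{2}-\mathscr W(u)^{2}}}$.

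For \textbf{c)}, the upper bound is elementary: for $x\in[0,1]$, $u(x)=u(1)-\int_{x}^{1}u'\,\diff t\le\alpha+M(1-x)\le\alpha+M$, and evenness covers $x<0$. For the lower bound, take $x\in[0,1]$ and estimate, using $\tfrac{|u'|}{u}=|u'|\sqrt{1+u'^{2}}\cdot\tfrac{1}{u\sqrt{1+u'^{2}}}\le M\sqrt{1+M^{2}}\cdot\tfrac{1}{u\sqrt{1+u'^{2}}}$ together with the by-product $\int_{0}^{1}\tfrac{\diff t}{u\sqrt{1+u'^{2}}}\le\tfrac1\pi\mathscr W(u)$ from a),
$$\log\alpha-\log u(x)=\int_{x}^{1}\frac{u'(t)}{u(t)}\,\diff t\le M\sqrt{1+M^{2}}\int_{x}^{1}\frac{\diff t}{u(t)\sqrt{1+u'(t)^{2}}}\le\frac{1}{\pi}M\sqrt{1+M^{2}}\,\mathscr W(u);$$
exponentiating and using evenness for $x<0$ gives the claimed lower bound. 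The main obstacle is pinning down the sharp constant $16\pi^{2}$ in b): it forces one to extract three independent factors of $2$ — restricting to $[0,1]$ by evenness, the two-sided representation $2g(x_{0})=\int_{0}^{x_{0}}g'-\int_{x_{0}}^{1}g'$, and the AM--GM step collapsing $\sqrt{AB}$ to $(A+B)/2$ — and each is tied to the boundary-term-free form of \eqref{equiv}, i.e. to $u'(\pm1)=0$, which likewise drives the estimate $\int_{-1}^{1}\tfrac{\diff x}{u\sqrt{1+u'^{2}}}\le\tfrac{2}{\pi}\mathscr W(u)$ behind a) and the lower bound in c).
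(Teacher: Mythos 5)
Your argument is correct, and it splits naturally: parts a) and c) follow essentially the paper's own proof, while part b) takes a genuinely different route. In a) the paper also combines \eqref{equiv} (with the boundary term killed by $u'(\pm 1)=0$) with the Cauchy--Schwarz inequality; it keeps both squares inside the Cauchy--Schwarz step, whereas you first discard the $u''$-term and then pair $1=\bigl(u\sqrt{1+u'^2}\bigr)^{1/2}\bigl(u\sqrt{1+u'^2}\bigr)^{-1/2}$ --- same mechanism, same constant. Part c) is the paper's computation with $\log u$ almost verbatim. In b) the paper applies \eqref{equiv} a second time, on the subinterval with endpoints $x_0$ (a maximiser of $|u'|$) and $0$ or $1$, chosen according to the sign of $u'(x_0)$, and reads the bound $\frac{|u'(x_0)|}{\sqrt{1+u'(x_0)^2}}\le\frac{\mathscr W(u)}{4\pi}$ directly off the boundary term $4\pi\,\frac{u'}{\sqrt{1+u'^2}}\Big|_a^b$ after discarding the remaining square; you instead bound the total variation of $g=u'/\sqrt{1+u'^2}$ via $\int_{-1}^1|g'|\,\diff x\le\sqrt{AB}\le\tfrac12(A+B)=\tfrac1\pi\mathscr W(u)$ (Cauchy--Schwarz plus AM--GM on the two pieces of the expanded Willmore integrand) and recover the same constant from the two-sided fundamental-theorem identity $2g(x_0)=\int_0^{x_0}g'-\int_{x_0}^1 g'$ together with the evenness of $|g'|$. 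Both are valid and yield the same constant $16\pi^2$; the paper's version is shorter and needs no AM--GM, while yours has the side benefit of an $L^1$-bound on $g'$ (a total-curvature type quantity) in terms of $\mathscr W(u)$. One further remark: in c) both you and the paper rely on $\int_0^1\frac{\diff t}{u(t)\sqrt{1+u'(t)^2}}\le\frac1\pi\mathscr W(u)$, which requires the boundary-term-free form of \eqref{equiv}, i.e. $u'(\pm1)=0$ (a hypothesis not listed in c) but satisfied in every application of the lemma); you make this dependence explicit by calling it a by-product of a), which is a fair and even slightly more transparent bookkeeping than the paper's.
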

\begin{proof} 
a) Using H\"older's inequality and (\ref{equiv})  we have
\begin{eqnarray*}
4 \pi & \leq & 2  \pi \int_{-1}^1 \underbrace{ \sqrt{ \Bigl( \frac{1}{u(x) \sqrt{1+u'(x)^2}} \Bigr)^2 +\underbrace{ \Bigl( \frac{u''(x)}{{(1+u'(x)^2)}^{3/2}} \Bigr)^2}_{\ge 0} } \, u(x) \sqrt{1+u'(x)^2} }_{\ge 1} \, \diff x \\
& \leq & \sqrt{ 2 \pi} \Bigl( \int_{-1}^1 \left[ \bigl( \frac{1}{u(x) \sqrt{1+u'(x)^2}} \bigr)^2 + \bigl( \frac{u''(x)}{{(1+u'(x)^2)}^{3/2}} \bigr)^2 \right] u(x) \sqrt{1+u'(x)^2} \,  \diff x \Bigr)^{\frac{1}{2}}
\sqrt{\mathscr A(u)}  \\
& = & \sqrt{2 \pi} \Bigl( \int_{-1}^1 \bigl( \frac{1}{u(x) \sqrt{1+u'(x)^2}} - \frac{u''(x)}{{(1+u'(x)^2)}^{3/2}} \bigr)^2 u(x) \sqrt{1+u'(x)^2} \, \diff x \Bigr)^{\frac{1}{2}}
\sqrt{\mathscr A(u)}  \\
& = & 2 \sqrt{ \mathscr W(u)} \sqrt{\mathscr A(u)}.
\end{eqnarray*}
b) Choose $x_0 \in (0,1)$ with $| u'(x_0) |= \max_{x \in [-1,1]} | u'(x) |$ and let $[a,b]=[x_0,1]$ if $u'(x_0) \geq 0$ and $[a,b]=[0,x_0]$ if
$u'(x_0)<0$. Since $u'(0)=u'(1)=0$ we calculate with the help of (\ref{equiv})
\begin{eqnarray*}
\mathscr W(u) & \geq & \pi  \int_a^b \Bigl( \frac{1}{u(x) \sqrt{1+ u'(x)^2}} - \frac{u''(x)}{{(1+ u'(x)^2)}^{3/2}} \Bigr)^2 u(x) \sqrt{1+ u'(x)^2} \, \diff x \\
& = & \pi \int_a^b \Bigl( \frac{1}{u(x) \sqrt{1+ u'(x)^2}} +  \frac{u''(x)}{{(1+ u'(x)^2)}^{3/2}} \Bigr)^2 u(x) \sqrt{1+ u'(x)^2}\,  \diff x - 4 \pi \frac{u'(x)}{\sqrt{1+ u'(x)^2}} \Big|^b_a \\
& \geq & 4 \pi \frac{| u'(x_0)|}{\sqrt{1+u'(x_0)^2}}
\end{eqnarray*}
and the required bound follows by solving for $|u'(x_0)|$. \\
c) For $x \in [0,1]$ we have
\begin{eqnarray*}
\log u(x) & = & \log u(1) - \int_x^1 \frac{u'(t)}{u(t)} \, \diff t  \geq  \log \alpha -  M \sqrt{1+M^2} \int_0^1 \frac{1}{u(t) \sqrt{1+u'(t)^2}} \, \diff t \\
& \geq &  \log \alpha - \frac{1}{\pi} M \sqrt{1+M^2} \, \mathscr W(u),
\end{eqnarray*}
which implies the lower bound on $u$. A similar idea was used in \cite[Lemma 4.9]{DFGS}. The upper bound is immediate. 
\end{proof}

\begin{corollary} \label{cor1} Suppose that $u \in H^2(-1,1)$ is even with $u'(\pm 1)=0$. Then 
$\mathscr H_\varepsilon(u) \ge 4\pi \sqrt{\varepsilon}$ and
\begin{equation} \label{wubound}
\mathscr W(u) \leq \frac{1}{2} \bigl( \mathscr H_\varepsilon(u) + \sqrt{\mathscr H_\varepsilon(u)^2 - 16 \pi^2 \varepsilon} \bigr).
\end{equation}
\end{corollary}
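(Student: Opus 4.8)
The plan is to combine the identity $\mathscr H_\varepsilon(u) = \mathscr W(u) + \varepsilon \mathscr A(u)$ from \eqref{def:Helfrich_rev} with the isoperimetric-type inequality $\mathscr A(u)\,\mathscr W(u) \ge 4\pi^2$ of Lemma \ref{uniformbounds}\,a), and then to read off both assertions from elementary properties of a quadratic expression. One may assume $\mathscr A(u) < \infty$ and $\mathscr W(u) < \infty$, since otherwise $\mathscr H_\varepsilon(u) = +\infty$ whenever $\varepsilon>0$ and there is nothing to prove; the case $\varepsilon = 0$ is trivial because then $\mathscr H_0 = \mathscr W$ and both inequalities hold, the second one with equality. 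Under this finiteness assumption Lemma \ref{uniformbounds}\,a) forces $W := \mathscr W(u) > 0$, so that $\mathscr A(u) \ge 4\pi^2/W$ and the division by $W$ carried out below is legitimate.

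First step: from the above we obtain
\[
\mathscr H_\varepsilon(u) \;=\; W + \varepsilon\,\mathscr A(u) \;\ge\; W + \frac{4\pi^2\varepsilon}{W}.
\]
Applying the inequality between arithmetic and geometric means to the two summands on the right-hand side gives $W + \tfrac{4\pi^2\varepsilon}{W} \ge 2\sqrt{4\pi^2\varepsilon} = 4\pi\sqrt{\varepsilon}$, which is the first assertion $\mathscr H_\varepsilon(u) \ge 4\pi\sqrt\varepsilon$.

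Second step: multiplying the displayed inequality by $W>0$ and rearranging yields
\[
W^2 - \mathscr H_\varepsilon(u)\,W + 4\pi^2\varepsilon \;\le\; 0.
\]
By the first step the discriminant $\mathscr H_\varepsilon(u)^2 - 16\pi^2\varepsilon$ is nonnegative, so the quadratic $t \mapsto t^2 - \mathscr H_\varepsilon(u)\,t + 4\pi^2\varepsilon$ has the two real roots $\tfrac12\big(\mathscr H_\varepsilon(u) \pm \sqrt{\mathscr H_\varepsilon(u)^2 - 16\pi^2\varepsilon}\big)$ and is $\le 0$ precisely on the closed interval between them. Since $W$ lies in this interval, in particular $W \le \tfrac12\big(\mathscr H_\varepsilon(u) + \sqrt{\mathscr H_\varepsilon(u)^2 - 16\pi^2\varepsilon}\big)$, which is \eqref{wubound}.

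There is no real obstacle here: the argument is a two-line consequence of Lemma \ref{uniformbounds}\,a). The only points deserving a little care are the bookkeeping of the degenerate cases $\mathscr A(u)=\infty$, $\mathscr W(u)=\infty$ and $\varepsilon=0$, and the remark that Lemma \ref{uniformbounds}\,a) already secures $\mathscr W(u) > 0$ in the nontrivial case.
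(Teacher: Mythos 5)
Your proof is correct and follows essentially the same route as the paper: the decomposition $\mathscr H_\varepsilon=\mathscr W+\varepsilon\mathscr A$ combined with Lemma \ref{uniformbounds}\,a) to get $\mathscr H_\varepsilon(u)\ge \mathscr W(u)+\frac{4\pi^2\varepsilon}{\mathscr W(u)}$, and then the resulting quadratic inequality in $\mathscr W(u)$. Your extra remarks (AM--GM for the lower bound $4\pi\sqrt{\varepsilon}$, positivity of $\mathscr W(u)$, degenerate cases) only make explicit what the paper leaves implicit; note that for $u\in H^2(-1,1)$ with $u>0$ both functionals are automatically finite, so the degenerate cases do not actually occur.
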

\begin{proof}  Lemma \ref{uniformbounds} a) implies that 
\begin{displaymath}
\mathscr H_\varepsilon(u) =  \mathscr W(u) + \varepsilon \mathscr A(u) \geq \mathscr W(u) + \frac{4 \pi^2 \varepsilon}{\mathscr W(u)}
\end{displaymath}
and hence
\begin{displaymath}
\mathscr W(u)^2 - \mathscr H_\varepsilon(u) \,  \mathscr W(u) + 4 \pi^2 \varepsilon \leq 0,
\end{displaymath}
which yields (\ref{wubound}).
\end{proof}

\begin{theorem} \label{thm:energyexistence}
Let $\alpha>0$ and $0 \leq \varepsilon \leq 4$. If $\mathscr H_\varepsilon(v)< \pi(4+\varepsilon)$ for some $v \in N_\alpha$, 
then $\mathscr H_\varepsilon$ attains a 
minimum $u \in N_\alpha$, which belongs to $C^{\infty}([-1,1])$.
\end{theorem}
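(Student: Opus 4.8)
The plan is to apply the direct method of the calculus of variations. Set $\mu:=\inf_{N_\alpha}\mathscr H_\varepsilon$; by hypothesis $\mu\le\mathscr H_\varepsilon(v)<\pi(4+\varepsilon)$, while Corollary~\ref{cor1} gives $\mu\ge 4\pi\sqrt{\varepsilon}\ge 0$. Choose a minimising sequence $(u_k)\subset N_\alpha$; discarding finitely many terms we may assume $\mathscr H_\varepsilon(u_k)\le C_0$ for a fixed $C_0<\pi(4+\varepsilon)$. The crucial first step is to extract from this a \emph{uniform} bound $\mathscr W(u_k)\le W_0<4\pi$. Indeed, by \eqref{wubound} the function $t\mapsto\frac12(t+\sqrt{t^2-16\pi^2\varepsilon})$ is strictly increasing, and at $t=\pi(4+\varepsilon)$ it equals $\frac\pi2\bigl((4+\varepsilon)+\sqrt{(4+\varepsilon)^2-16\varepsilon}\bigr)=\frac\pi2\bigl((4+\varepsilon)+|4-\varepsilon|\bigr)=4\pi$, where the last equality uses precisely $\varepsilon\le 4$; hence $\mathscr W(u_k)\le\frac12(C_0+\sqrt{C_0^2-16\pi^2\varepsilon})=:W_0<4\pi$. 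This is the one place where the restriction $\varepsilon\le 4$ enters. Lemma~\ref{uniformbounds}\,b) then yields $\|u_k'\|_{L^\infty}\le M:=W_0/\sqrt{16\pi^2-W_0^2}$, and Lemma~\ref{uniformbounds}\,c) gives $0<c_0\le u_k\le\alpha+M$ on $[-1,1]$ with $c_0:=\alpha\exp(-\frac1\pi M\sqrt{1+M^2}\,W_0)$, all constants independent of $k$.

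The second step upgrades these to an $H^2$ bound. Taking $\varepsilon=0$ in \eqref{eq:helfrich_functional1} gives $\mathscr W(u_k)=\frac\pi2\int_{-1}^1\frac{1}{u_k\sqrt{1+u_k'^2}}\,\diff x+\frac\pi2\int_{-1}^1\frac{u_k u_k''^2}{(1+u_k'^2)^{5/2}}\,\diff x$; the first integral is nonnegative, so the second is at most $\frac2\pi W_0$, and using $u_k\ge c_0$ and $|u_k'|\le M$ one concludes that $\|u_k''\|_{L^2}^2$ is bounded independently of $k$. Hence $(u_k)$ is bounded in $H^2(-1,1)$, and after passing to a subsequence $u_k\rightharpoonup u$ in $H^2(-1,1)$ and $u_k\to u$ in $C^1([-1,1])$ by compact embedding. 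The $C^1$ convergence transfers the conditions $u(\pm1)=\alpha$, $u'(\pm1)=0$ and the evenness to $u$, while $u\ge c_0>0$; therefore $u\in N_\alpha$.

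The third step is lower semicontinuity of $\mathscr H_\varepsilon=\mathscr W+\varepsilon\mathscr A$ along $(u_k)$. By the uniform $C^1$ convergence and $u_k\ge c_0$, the integrands of $\mathscr A(u_k)$ and of $\frac\pi2\int\frac{1}{u_k\sqrt{1+u_k'^2}}\,\diff x$ converge uniformly, so these quantities converge to the corresponding ones for $u$. For the remaining term, write $\frac\pi2\int\frac{u_k u_k''^2}{(1+u_k'^2)^{5/2}}\,\diff x=\|\sqrt{a_k}\,u_k''\|_{L^2}^2$ with $a_k:=\frac\pi2\,u_k/(1+u_k'^2)^{5/2}$; since $a_k\to a:=\frac\pi2\,u/(1+u'^2)^{5/2}$ uniformly and $a_k$ is bounded below by a positive constant uniformly in $k$, we get $\sqrt{a_k}\,u_k''\rightharpoonup\sqrt{a}\,u''$ in $L^2$, and weak lower semicontinuity of the $L^2$-norm gives $\frac\pi2\int\frac{u u''^2}{(1+u'^2)^{5/2}}\,\diff x\le\liminf_k\frac\pi2\int\frac{u_k u_k''^2}{(1+u_k'^2)^{5/2}}\,\diff x$. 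Adding the three contributions yields $\mathscr H_\varepsilon(u)\le\liminf_k\mathscr H_\varepsilon(u_k)=\mu$, so $u$ is a minimiser.

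Finally, regularity. Since $u>0$ on $[-1,1]$, $u$ is an interior point of the admissible class, so $\langle\mathscr H_\varepsilon'(u),\varphi\rangle=0$ for all $\varphi\in H^2_0(-1,1)$ (for even $\varphi$ directly, and for general $\varphi$ by the reflection symmetry of $\mathscr H_\varepsilon$). Writing this weak equation in the divergence form established in Appendix~\ref{sec:divergence_form_helfrich} and observing that the associated fourth order quasilinear operator is elliptic and, on the region where $u\ge c_0$ with $|u'|\le M$, nondegenerate, a standard bootstrap (difference quotients to raise the regularity one step at a time, then Schauder estimates) gives $u\in C^\infty([-1,1])$, and $u$ solves \eqref{dirhelf}--\eqref{dirbc} classically. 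The main obstacle is the uniform estimate $\mathscr W(u_k)\le W_0<4\pi$ of the first step — this is exactly what the energy threshold $\pi(4+\varepsilon)$ together with $\varepsilon\le 4$ is designed to furnish; once it is available, the compactness and lower semicontinuity are routine, and the only further technically delicate point is the regularity bootstrap for the non-uniformly elliptic equation.
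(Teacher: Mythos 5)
Your proposal is correct and follows essentially the same route as the paper: the hypothesis $\mathscr H_\varepsilon(v)<\pi(4+\varepsilon)$ together with $\varepsilon\le 4$ and Corollary \ref{cor1} forces a uniform Willmore bound strictly below $4\pi$ along the minimising sequence, after which Lemma \ref{uniformbounds} and \eqref{equiv} (equivalently \eqref{eq:helfrich_functional1}) give $C^1$-, positivity- and $H^2$-bounds, and the direct method with weak lower semicontinuity plus an elliptic bootstrap concludes. You merely spell out details (the computation $\sqrt{(4+\varepsilon)^2-16\varepsilon}=4-\varepsilon$, the lower semicontinuity of the second-order term, the regularity argument) that the paper delegates to the proof of \cite[Theorem 3.9]{DDG}.
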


\begin{proof} Let $(u_k)_{k \in \mathbb{N}} \subset N_\alpha$ be a minimising sequence such that $\mathscr H_\varepsilon(u_k) 
\searrow \inf_{w \in N_\alpha} \mathscr H_\varepsilon(w) \leq \mathscr H_\varepsilon(v)$. Corollary
\ref{cor1} implies that
\begin{displaymath}
\limsup_{k \rightarrow \infty} \mathscr W(u_k) \leq \frac{1}{2} \lim_{k \rightarrow \infty}  \bigl( \mathscr H_\varepsilon(u_k) 
+ \sqrt{\mathscr H_\varepsilon(u_k)^2 - 16 \pi^2 \varepsilon} \bigr) \leq 
\frac{1}{2} \bigl( \mathscr H_\varepsilon(v) + \sqrt{\mathscr H_\varepsilon(v)^2 - 16 \pi^2 \varepsilon} \bigr) < 4 \pi,
\end{displaymath}
since $\mathscr H_\varepsilon(v)< \pi(4+\varepsilon)$ and $\varepsilon \leq 4$. Thus there exist $\delta>0, k_0 \in \mathbb{N}$ 
such that $\mathscr W(u_k) \leq 4 \pi - \delta$ for all $k \geq k_0$. Lemma
\ref{uniformbounds} then implies that $(u_k)_{k \in \mathbb{N}}$ is bounded in $C^1([-1,1])$ and that $0<\varrho \le u_k(x)\le \frac{1}{\varrho}$ for a suitable $\varrho >0$. Then one sees from (\ref{equiv}) that a bound for ${\mathscr W} (u_k)$ also yields an $L^2$-bound for $\left(u_k''\right)_{k\in\mathbb{N}}$.
Arguing as in the proof of 
\cite[Theorem\;3.9]{DDG} we obtain a subsequence, again
denoted by $(u_k)_{k \in \mathbb{N}}$, and $u \in N_{\alpha}$ such that
\begin{displaymath}
u_k \rightharpoonup u \mbox{ in } H^2(-1,1), \quad u_k \rightarrow u \mbox{ in } C^1([-1,1])
\end{displaymath}
and it is easily seen that $u$ is a minimiser of $\mathscr H_\varepsilon$. The fact that $u$ belongs to $C^{\infty}([-1,1])$ can be shown by a straightforward
adaptation of the  corresponding argument in the proof of \cite[Theorem\;3.9]{DDG}.
\end{proof}

In order to apply Theorem \ref{thm:energyexistence} we look for suitable functions $v \in N_\alpha$ such that $\mathscr H_\varepsilon(v) < \pi(4+\varepsilon)$. 
For  the simple choice $v \equiv \alpha$ we find that 
\begin{equation} \label{energyzylinder}
\mathscr H_\varepsilon (v) = \frac{\pi}{\alpha} + 4 \pi \alpha \varepsilon.
\end{equation}
As a second useful comparison function we define (see \cite[Lemma 8.6]{GGS})
\begin{displaymath}
v(x):= 
\left\{
\begin{array}{ll}
\sqrt{r^2 -x^2}, & | x| < x_0; \\[2mm]
\alpha \cosh \bigl( \frac{|x|-1}{\alpha} \bigr), & x_0 \leq |x| \leq 1,
\end{array}
\right.
\end{displaymath}
where $r=\sqrt{x_0^2+ \alpha^2 \cosh \bigl( \frac{x_0-1}{\alpha} \bigr)^2}$ and  $x_0 \in (\frac{1}{2},1)$ is chosen in such a way that
\begin{displaymath}
x_0 + \alpha \cosh \bigl( \frac{x_0-1}{\alpha} \bigr) \sinh \bigl( \frac{x_0-1}{\alpha} \bigr)=0, \quad 
x  + \alpha \cosh \bigl( \frac{x-1}{\alpha} \bigr) \sinh \bigl( \frac{x-1}{\alpha} \bigr)>0, x \in (x_0,1].
\end{displaymath}
This condition means that the normal to the $\cosh$-function through the point $(x_0,v(x_0))$ intersects the $x$-axis in the origin.
One may observe that
$$
x\mapsto x  + \alpha \cosh \bigl( \frac{x-1}{\alpha} \bigr) \sinh \bigl( \frac{x-1}{\alpha} \bigr)
$$
is strictly increasing on $[0,1]$, strictly negative for $x=\frac12$ and strictly positive for $x=1$.
The choice of $x_0$ and $r$  ensures that $v \in N_\alpha$ and that $r \leq \sqrt{1+\alpha^2}$.  Since $H=0$ for $x_0 \leq |x| \leq 1$, 
we obtain with the help of (\ref{equiv}) that
\begin{eqnarray}
\mathscr H_\varepsilon(v) & = &  \frac{\pi}{2} \int_{-x_0}^{x_0} \Bigl( \frac{1}{ v(x)\,\sqrt{ 1+ v'(x)^2 } } 
+  \frac{v''(x)}{{(1+ v'(x)^2)}^{3/2}} \Bigr)^2 v(x) \sqrt{1+v'(x)^2} \,  \diff x - 2 \pi \frac{v'(x)}{\sqrt{1+v'(x)^2}} \Big|^{x_0}_{-x_0} 
\nonumber \\
& & +   4 \pi \varepsilon  x_0 r  + 2  \pi \alpha \varepsilon  \Bigl(1-x_0 + \frac{\alpha}{2} \sinh \bigl( \frac{2 (1-x_0)}{\alpha} \bigr) \Bigr) \nonumber \\
& <  &  4 \pi \tanh \bigl(\frac{1-x_0}{\alpha} \bigr) + \varepsilon \bigl( 4 \pi \sqrt{1+ \alpha^2} + \pi \alpha + \pi \alpha^2 \sinh \bigl( \frac{1}{\alpha} \bigr) \bigr) \nonumber  \\
& < &  4 \pi \tanh \bigl( \frac{1}{2 \alpha} \bigr) + \varepsilon \bigl( 4 \pi \sqrt{1+ \alpha^2} + \pi \alpha + \pi \alpha^2 \sinh \bigl( \frac{1}{\alpha} \bigr) \bigr), \label{energy2}
\end{eqnarray}
since $\frac{1}{2} < x_0< 1$ and 
\begin{displaymath}
\frac{1}{v(x) \sqrt{1+v'(x)^2}} + \frac{v''(x)}{{(1+v'(x)^2)}^{3/2}} = 0, \quad x \in (-x_0,x_0).
\end{displaymath}

{From} the above calculations we infer the following existence result:

%-----------------------------------------------------------------------
\begin{theorem}
	\label{thm:existence_energy_method} 
	The functional $\mathscr H_\varepsilon$ attains a minimum on $N_\alpha$, if one of the following conditions is satisfied:
	\begin{enumerate}[label={(\roman*)}]
		\item\label{item:existence_energy_method_i} $\alpha>\frac{1}{4}$ and $0 \leq \varepsilon < \frac{1}{\alpha}$;
		\item\label{item:existence_energy_method_ii} $\alpha>0$ and $\displaystyle 0 \leq \varepsilon \leq \frac{1 - \tanh \bigl( \frac{1}{2 \alpha} \bigr)}{\sqrt{1+\alpha^2} + \frac{\alpha}{4}+ \frac{\alpha^2}{4} \sinh \bigl( \frac{1}{\alpha} \bigr) - \frac{1}{4}}$. 
	\end{enumerate}
\end{theorem}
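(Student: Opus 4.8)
The plan is to reduce both statements to Theorem~\ref{thm:energyexistence}: for a given pair $(\alpha,\varepsilon)$ with $0\le\varepsilon\le 4$ it suffices to produce a single comparison function $v\in N_\alpha$ with $\mathscr H_\varepsilon(v)<\pi(4+\varepsilon)$, and then the direct method, lower semicontinuity, the $C^1$- and positivity bounds via Lemma~\ref{uniformbounds}, and the smoothness of the minimiser are all already supplied. The two items correspond to the two comparison functions constructed above — the cylinder $v\equiv\alpha$ for item~\ref{item:existence_energy_method_i} and the glued sphere–catenoid profile for item~\ref{item:existence_energy_method_ii} — and in each case the stated condition on $\varepsilon$ is exactly what the explicit energy estimate gives after an elementary rearrangement.

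For item~\ref{item:existence_energy_method_i} I would take $v\equiv\alpha$ and insert \eqref{energyzylinder}: the inequality $\mathscr H_\varepsilon(v)<\pi(4+\varepsilon)$ reads $\frac1\alpha+4\alpha\varepsilon<4+\varepsilon$, i.e. $(4\alpha-1)\varepsilon<\frac{4\alpha-1}{\alpha}$. Under the hypothesis $\alpha>\frac14$ the factor $4\alpha-1$ is strictly positive, so this is equivalent to $\varepsilon<\frac1\alpha$; moreover $\alpha>\frac14$ forces $\frac1\alpha<4$, so the structural restriction $\varepsilon\le4$ of Theorem~\ref{thm:energyexistence} is automatically satisfied. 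Hence Theorem~\ref{thm:energyexistence} applies and yields a minimiser in $C^\infty([-1,1])$.

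For item~\ref{item:existence_energy_method_ii} I would take $v$ to be the glued profile defined before \eqref{energy2} (which the preceding discussion already shows to lie in $N_\alpha$) and use the estimate \eqref{energy2}. It then suffices that
\[
4\pi\tanh\bigl(\tfrac1{2\alpha}\bigr)+\varepsilon\pi\bigl(4\sqrt{1+\alpha^2}+\alpha+\alpha^2\sinh\bigl(\tfrac1\alpha\bigr)\bigr)\le\pi(4+\varepsilon),
\]
and since the quantity $\sqrt{1+\alpha^2}+\tfrac\alpha4+\tfrac{\alpha^2}4\sinh\bigl(\tfrac1\alpha\bigr)-\tfrac14$ is strictly positive (indeed $\ge\tfrac34$), dividing through by it rearranges this precisely to the bound on $\varepsilon$ stated in \ref{item:existence_energy_method_ii}. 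Because the numerator $1-\tanh\bigl(\tfrac1{2\alpha}\bigr)$ is smaller than $1$ while that denominator exceeds $\tfrac34$, the admissible range of $\varepsilon$ lies in $[0,\tfrac43)\subset[0,4]$, so once again the hypothesis $\varepsilon\le4$ of Theorem~\ref{thm:energyexistence} is met, and the conclusion follows.

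There is no genuine obstacle here: all the analytic content is packaged in Theorem~\ref{thm:energyexistence} and the energy computations \eqref{energyzylinder} and \eqref{energy2}. The only points requiring a little care are (a) choosing the test function adapted to the parameter regime — the cylinder is useful only for $\alpha>\frac14$, which is exactly why the glued comparison function is needed to reach small $\alpha$ — and (b) verifying, as above, that the resulting $\varepsilon$-ranges are compatible with the restriction $\varepsilon\le4$ inherited from Theorem~\ref{thm:energyexistence}.
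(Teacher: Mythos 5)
Your proposal is correct and follows exactly the paper's route: both items are reduced to Theorem~\ref{thm:energyexistence} using the cylinder energy \eqref{energyzylinder} for \ref{item:existence_energy_method_i} and the glued sphere--catenoid estimate \eqref{energy2} for \ref{item:existence_energy_method_ii}, which is precisely the paper's one-line proof with the elementary rearrangements (including the check $\varepsilon\le 4$ and the use of the strict inequality in \eqref{energy2} to cover the equality case in \ref{item:existence_energy_method_ii}) written out.
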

\begin{proof} The result follows from  (\ref{energyzylinder}), (\ref{energy2}) and Theorem \ref{thm:energyexistence}.
\end{proof}

%=======================================================================
%=======================================================================
\section{Perturbation of Helfrich cylinders}
\label{sec:perturbation_cylinder}

Throughout this section we fix for a given $ \alpha >0 $
$$
{{\varepsilon}_\alpha}:=\frac{1}{4\alpha^2}
$$
as the parameter where the Helfrich cylinder
$$
u_{{{\varepsilon}_\alpha}}(x)\equiv \alpha
$$
is the unique minimiser of ${\mathscr H}_{{{\varepsilon}_\alpha}}$ (see \cite[Lemma\;4.1]{Scholtes}). Let us define
\begin{displaymath}
F(\varepsilon,u):= \Delta_{ S } H  + 2H \, (H ^2- K) - 2 \varepsilon  H.
\end{displaymath}
Recalling  \eqref{dirhelf} and taking into account that
\begin{displaymath}
H[u_{{{\varepsilon}_\alpha}}] \equiv \frac{1}{2 \alpha}, \qquad  \frac{\diff }{\diff  \delta} H[u_{{{\varepsilon}_\alpha}}+ \delta \varphi]_{| \delta =0} = \frac{1}{2} \bigl(- \frac{1}{\alpha^2} \varphi - \varphi'' \bigr)
\end{displaymath}
we calculate
\begin{eqnarray}
\lefteqn{ \langle \frac{\partial F}{\partial u}(\varepsilon_\alpha, u_{{{\varepsilon}_\alpha}}), \varphi \rangle = \frac{\diff }{\diff  \delta} F(\varepsilon_\alpha, \alpha+ \delta \varphi)_{| \delta =0} } \nonumber \\
& =  & \frac{1}{\alpha} \frac{\diff ^2}{\diff  x^2} \bigg( \alpha \frac{1}{2} \bigl(- \frac{1}{\alpha^2} \varphi - \varphi'' \bigr) \bigg) 
 + \frac{1}{4} \frac{1}{\alpha^2}   \bigl(- \frac{1}{\alpha^2} \varphi - \varphi'' \bigr) 
 +  \frac{1}{2 \alpha}  \frac{1}{\alpha} \bigl( \varphi'' - \frac{1}{\alpha^2} \varphi \bigr) -  \frac{1}{4 \alpha^2}   \bigl(- \frac{1}{\alpha^2} \varphi - \varphi'' \bigr) \nonumber \\
& = & -\frac{1}{2} \bigl( \varphi^{(iv)} + \frac{1}{\alpha^4} \varphi \bigr). \label{Ffirstvar}
\end{eqnarray}

\begin{theorem}
	\label{thm:exist_implicit_fct} 
	For every $ \alpha  > 0 $ there exists  $ \delta >0 $ such that for all $ \varepsilon  \in (  \varepsilon _ \alpha - \delta ,  \varepsilon _ \alpha + \delta ) $ the boundary value problem \eqref{dirhelf}, \eqref{dirbc} has a smooth family of solutions $ u_ \varepsilon  \in N_ \alpha \cap C^ \infty ([-1,1]) $.
\end{theorem}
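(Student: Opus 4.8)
The plan is to set up the Dirichlet problem \eqref{dirhelf}, \eqref{dirbc} as an abstract equation $G(\varepsilon,\varphi)=0$ and to apply the implicit function theorem at the solution furnished by the Helfrich cylinder $u_{\varepsilon_\alpha}\equiv\alpha$. Writing $u=\alpha+\varphi$, I would work in the Banach spaces of \emph{even} functions encoding the clamped boundary data, for instance
$$
X:=\{\varphi\in H^4(-1,1)\ :\ \varphi\ \text{even},\ \varphi(\pm1)=0,\ \varphi'(\pm1)=0\},\qquad
Y:=\{\psi\in L^2(-1,1)\ :\ \psi\ \text{even}\},
$$
and set $G(\varepsilon,\varphi):=F(\varepsilon,\alpha+\varphi)$. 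Since the cylinder is strictly positive with vanishing slope, for $\varphi$ in a small ball $B\subset X$ the competitor $\alpha+\varphi$ stays bounded away from $0$ in $C^1$, so every denominator occurring in \eqref{dirhelf} is bounded away from zero; rewriting that equation in the form $u^{(iv)}=\Phi(x,u,u',u'',u''')$ with $\Phi$ smooth in its arguments, one checks in the standard way that $G\colon\mathbb{R}\times B\to Y$ is well defined and smooth, with $G(\varepsilon_\alpha,0)=0$.

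The decisive step is to show that $L:=D_\varphi G(\varepsilon_\alpha,0)\colon X\to Y$ is an isomorphism; by \eqref{Ffirstvar} it is the constant-coefficient operator $L\varphi=-\tfrac12\bigl(\varphi^{(iv)}+\alpha^{-4}\varphi\bigr)$. For injectivity I would solve $\varphi^{(iv)}+\alpha^{-4}\varphi=0$ explicitly: the characteristic roots are $\pm\mu\pm i\mu$ with $\mu:=(\alpha\sqrt2)^{-1}$, so the even solutions are spanned by $\cosh(\mu x)\cos(\mu x)$ and $\sinh(\mu x)\sin(\mu x)$, and the remaining conditions $\varphi(1)=\varphi'(1)=0$ reduce to a homogeneous $2\times2$ linear system with determinant
$$
\tfrac{\mu}{2}\bigl(\sin(2\mu)+\sinh(2\mu)\bigr).
$$
Since $\sinh t>t\ge-\sin t$ for $t>0$, this is strictly positive for every $\mu>0$, i.e. for every $\alpha>0$, so $L$ is injective. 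For surjectivity I would write $L=L_0+K$, where $L_0\varphi=-\tfrac12\varphi^{(iv)}$ is an isomorphism $X\to Y$ (solve $\varphi^{(iv)}=g$ by repeated integration from $0$; the even polynomial part $a+bx^2$ is uniquely pinned down by $\varphi(1)=\varphi'(1)=0$) and $K\varphi=-\tfrac1{2\alpha^4}\varphi$ is compact because $X\hookrightarrow Y$ is compact. Hence $L$ is Fredholm of index $0$, and being injective it is invertible with bounded inverse.

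With these preparations the implicit function theorem yields $\delta>0$ and a $C^\infty$ curve $(\varepsilon_\alpha-\delta,\varepsilon_\alpha+\delta)\ni\varepsilon\mapsto\varphi_\varepsilon\in X$ with $\varphi_{\varepsilon_\alpha}=0$ and $G(\varepsilon,\varphi_\varepsilon)=0$. Shrinking $\delta$ so that $\|\varphi_\varepsilon\|_{C^0}<\alpha$ makes $u_\varepsilon:=\alpha+\varphi_\varepsilon$ positive, hence $u_\varepsilon\in N_\alpha$ and $u_\varepsilon$ solves \eqref{dirhelf}, \eqref{dirbc}. Smoothness $u_\varepsilon\in C^\infty([-1,1])$ then follows by rewriting \eqref{dirhelf} as $u_\varepsilon^{(iv)}=\Phi(x,u_\varepsilon,u_\varepsilon',u_\varepsilon'',u_\varepsilon''')$ with $\Phi$ smooth (as $u_\varepsilon>0$) and bootstrapping, exactly as in the regularity part of the proof of \cite[Theorem\;3.9]{DDG}.

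I expect the main obstacle to be precisely the invertibility of $L$: the function spaces must be chosen so that the four scalar conditions ($\varphi$ even, $\varphi(\pm1)=\varphi'(\pm1)=0$) render $\varphi\mapsto\varphi^{(iv)}+\alpha^{-4}\varphi$ nondegenerate, and then one has to verify that the resonance quantity $\sin(2\mu)+\sinh(2\mu)$ has no zero for $\mu>0$ — this is exactly what makes the conclusion hold for \emph{every} $\alpha>0$ rather than only off an exceptional discrete set. The remaining ingredients (smoothness of $G$ near the strictly positive cylinder, the compact-perturbation argument for surjectivity, and the ODE bootstrap for $C^\infty$-regularity) should be routine.
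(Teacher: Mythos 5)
Your proposal is correct and follows essentially the same route as the paper: linearise at the Helfrich cylinder, show that $\varphi\mapsto -\tfrac12\bigl(\varphi^{(iv)}+\alpha^{-4}\varphi\bigr)$ with clamped boundary data has trivial kernel via the explicit fundamental system (your $2\times 2$ determinant $\tfrac{\mu}{2}\bigl(\sin(2\mu)+\sinh(2\mu)\bigr)$ is precisely the even-symmetry factor of the paper's $4\times 4$ determinant), deduce invertibility from a Fredholm index-zero argument, and conclude with the implicit function theorem plus an ODE bootstrap for smoothness. The only difference is bookkeeping: the paper works in $C^4([-1,1])$ with the boundary values placed in the target $C^0([-1,1])\times\mathbb{R}^4$ and cites elliptic theory for the Fredholm property, whereas your even $H^4\to L^2$ setting builds the boundary conditions and symmetry into the domain and makes the compact-perturbation argument explicit, which has the small added benefit of yielding the evenness needed for $u_\varepsilon\in N_\alpha$ automatically.
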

\begin{proof}
	We apply the implicit function theorem as it can be found in \cite[Theorem\;15.1]{Deimling1985}. Let $ X = \mathbb{R}  $, $ U = (0, \infty ) \subset X$, $ Y = C^4( [-1,1]) $, $ V = C^4([-1,1],(0, \infty )) \subset Y$, $ Z = C^0([-1,1]) \times \mathbb{R} ^4 $. Moreover consider the function $ \Phi \colon U \times V \rightarrow Z $ defined by
	\begin{align*}
		\Phi( \varepsilon , u) \, := \, \big( F(\varepsilon,u)  \, , \, u(-1)- \alpha \, , \, u(1)- \alpha \, , \, u'(-1) \, , \, u'(1) \big) \, .
	\end{align*}
	In view of (\ref{Ffirstvar}) we obtain for $ L \colon Y \rightarrow Z $ with $\displaystyle L(\varphi):= \langle \frac{\partial \Phi}{\partial u}(\varepsilon _ \alpha,u_{{{\varepsilon}_\alpha}}), \varphi \rangle$ that
	\begin{align*}
		L (\varphi )\,=\,  \bigg( - \frac{1}{2} \bigl( \varphi ^ {(iv)}+ \frac{1}{\alpha ^4} \, \varphi \bigr) \, , \, \varphi (-1) \, , \, \varphi (1) \, , \, \varphi '(-1) \, , \, \varphi '(1) \bigg).
	\end{align*}
	We claim that $L$ is injective. To see this, consider the boundary value problem
	\begin{equation}\label{eq:helfrich_cylinder_linearised}
         \displaystyle
           \varphi ^{(iv)} +\frac{1}{\alpha^4 } \varphi 
           = 0 \mbox{ in\ } (-1,1), \quad \varphi (\pm 1) = 
           \varphi '(\pm 1) = 0.
         \end{equation}
         The roots of the characteristic equation for the differential equation in \eqref{eq:helfrich_cylinder_linearised} are given by
         $ \pm \frac{1+i}{\alpha \sqrt{ 2 } } $ and $ \pm \frac{1-i}{\alpha \sqrt{ 2 } } $ so that the functions  
         \begin{eqnarray*}
		\varphi _1(x) = \cosh \bigg( \frac{x}{\alpha \sqrt{ 2 } }\bigg) \, \cos \bigg( \frac{x}{\alpha \sqrt{ 2 } }\bigg), & & 
		\varphi _2(x) = \sinh \bigg( \frac{x}{\alpha \sqrt{ 2 } }\bigg) \, \sin \bigg( \frac{x}{\alpha \sqrt{ 2 } }\bigg), \\
		\varphi _3(x) =  \cosh \bigg( \frac{x}{\alpha \sqrt{ 2 } }\bigg) \, \sin \bigg( \frac{x}{\alpha \sqrt{ 2 } }\bigg), & & 
		\varphi _4(x) = \sinh \bigg( \frac{x}{\alpha \sqrt{ 2 } }\bigg) \, \cos \bigg( \frac{x}{\alpha \sqrt{ 2 } }\bigg) 
	\end{eqnarray*}
        form a  fundamental system. Hence there exist constants $ b _1, \dots ,b _4 \in \mathbb{R} $ such that 
	$\varphi (x) \,=\, \sum_ {j=1} ^4 b _j \, \varphi _j(x), \, x \in [-1,1]$. The four boundary conditions $\varphi(\pm 1)= \varphi'(\pm 1)=0$ translate into the linear system $M b =0$ with $b=(b_1,\ldots,b_4)$ and
	$m_{1j}=\varphi_j(1), m_{2j}=\varphi_j(-1), m_{3j}=\frac{1}{\beta} \varphi_j'(1), m_{4j}=\frac{1}{\beta} \varphi_j'(-1), j=1,\ldots,4$, where we have abbreviated
	$ \beta = \frac{1}{\alpha \sqrt{ 2 }  } > 0 $. Using the above expressions for $\varphi_j$ we find that
	\begin{align*}
		M= \begin{pmatrix}
			 \cosh (\beta ) \, \cos (\beta ) &
			 \sinh (\beta ) \, \sin (\beta ) &
			 \cosh ( \beta ) \, \sin (\beta ) &
			 \sinh ( \beta ) \, \cos (\beta ) \\
			 \cosh (\beta ) \, \cos (\beta ) &
			 \sinh (\beta ) \, \sin (\beta ) &
			 -\cosh ( \beta ) \, \sin (\beta ) &
			 -\sinh ( \beta ) \, \cos (\beta ) \\
			\begin{aligned}
				&\scriptstyle{\sinh ( \beta ) \cos ( \beta ) } \\[-0.4em] & \scriptstyle{\quad  - \cosh ( \beta )  \sin ( \beta )}
			 \end{aligned} &
			\begin{aligned}
				&\scriptstyle{\cosh ( \beta )  \sin ( \beta )) } \\[-0.4em] & \scriptstyle{\quad  + \sinh ( \beta )  \cos ( \beta )}
			 \end{aligned} &
			\begin{aligned}
				&\scriptstyle{ \sinh ( \beta )  \sin ( \beta ) } \\[-0.4em] & \scriptstyle{\quad  + \cosh ( \beta )  \cos ( \beta ) }
			 \end{aligned} &
			\begin{aligned}
				&\scriptstyle{ \cosh ( \beta )  \cos ( \beta ) } \\[-0.4em] & \scriptstyle{\quad  - \sinh ( \beta )  \sin ( \beta )  }
			 \end{aligned} \\
			\begin{aligned}
				&\scriptstyle{ -\sinh ( \beta ) \cos ( \beta ) } \\[-0.4em] & \scriptstyle{\quad  + \cosh ( \beta )  \sin ( \beta )}
			 \end{aligned} &
			\begin{aligned}
				&\scriptstyle{- \cosh ( \beta )  \sin ( \beta )) } \\[-0.4em] & \scriptstyle{\quad  - \sinh ( \beta )  \cos ( \beta )}
			 \end{aligned} &
			\begin{aligned}
				&\scriptstyle{ \sinh ( \beta )  \sin ( \beta ) } \\[-0.4em] & \scriptstyle{\quad  + \cosh ( \beta )  \cos ( \beta ) }
			 \end{aligned} &
			\begin{aligned}
				&\scriptstyle{ \cosh ( \beta )  \cos ( \beta ) } \\[-0.4em] & \scriptstyle{\quad  - \sinh ( \beta )  \sin ( \beta )  }
			 \end{aligned} 
		\end{pmatrix}
	\end{align*}
	and a calculation shows that
	\begin{align*}
		\det ( M ) \,=\, - 4 \, \sinh^2 ( \beta ) \, \cosh ^2( \beta ) + 4 \, \sin ^2( \beta ) \, \cos ^2( \beta ) \, < \, 0. 
	\end{align*}
	Hence $ b_1 = \dots = b_4 = 0 $, which yields $ \varphi  \equiv 0 $, so that $L$ is injective. Furthermore, since $ L $ allows for an elliptic theory, i.\,e. $ L $ is Fredholm of index $ 0 $, we infer that $L$ is invertible with bounded inverse. Hence we can apply the implicit function theorem to obtain solutions $ u_ \varepsilon \in N_ \alpha \cap C^4([-1,1]) $ for $ \varepsilon  $ close to $ \varepsilon _0 $. By arguing again as in the proof of Theorem \ref{thm:energyexistence} we also have $ u_ \varepsilon \in C^ \infty ([-1,1]) $.
\end{proof}

%-----------------------------------------------------------------------

To study the behaviour of $u_\varepsilon$ for  $  \varepsilon  $ close to $  \varepsilon _ \alpha  $ we introduce 
\begin{equation} \label{defrc}
\mathit{rc}_\alpha:= \frac{\partial u_ \varepsilon }{\partial \varepsilon } |_ {\varepsilon =  \varepsilon _ \alpha }.
\end{equation}
Since $\displaystyle  \frac{\diff }{\diff  \varepsilon} F(\varepsilon,u_\varepsilon)_{| \varepsilon= \varepsilon_\alpha}=0$  we find with the help of (\ref{Ffirstvar}) that
\begin{displaymath}
0= \frac{\partial F}{\partial \varepsilon}(\varepsilon_\alpha,u_{{{\varepsilon}_\alpha}}) + \langle \frac{\partial F}{\partial u}(\varepsilon_\alpha,u_{{{\varepsilon}_\alpha}}), \mathit{rc}_\alpha \rangle = 
- \frac{1}{\alpha} -  \frac{1}{2} \bigl( \mathit{rc}_\alpha^{(iv)} + \frac{1}{\alpha^4} \mathit{rc}_\alpha \bigr)
\end{displaymath}
giving rise to the following boundary value problem for $\mathit{rc}_\alpha$:

\begin{equation}\label{eq:rate_of_change_2}
 \left\{ \begin{array}{l}
           \displaystyle
           \mathit{rc}_\alpha^{(iv)} +\frac{1}{\alpha^4 } \mathit{rc}_\alpha
           = -\frac{2}{\alpha}\quad \mbox{ in\ } \quad (-1,1),\\[4mm]
           \displaystyle \mathit{rc}_\alpha(\pm 1) = 
           \mathit{rc}_\alpha'(\pm 1) = 0.
         \end{array}
\right.
\end{equation}
This problem is solved by (cf. Figure \ref{fig:rate_of_change})
\begin{equation}\label{eq:rate_of_change_3}
 \mathit{rc}_\alpha (x) =-2\alpha^3 
 +a_\alpha \cosh\left(\frac{x}{\alpha\sqrt{2}} \right) \cos\left(\frac{x}{\alpha\sqrt{2}} \right) 
 +b_\alpha \sinh\left(\frac{x}{\alpha\sqrt{2}} \right) \sin\left(\frac{x}{\alpha\sqrt{2}} \right)
\end{equation}
with
\begin{eqnarray*}
 a_\alpha &=& \frac{2\alpha^3}{d_\alpha} \Bigg[  \sinh\left(\frac{1}{\alpha\sqrt{2}} \right)\cos\left(\frac{1}{\alpha\sqrt{2}} \right) 
 +\cosh\left(\frac{1}{\alpha\sqrt{2}} \right)\sin\left(\frac{1}{\alpha\sqrt{2}} \right) \Bigg], \\
 b_{\alpha} &=& - \frac{2\alpha^3}{d_\alpha} \Bigg[ \sinh\left(\frac{1}{\alpha\sqrt{2}} \right)\cos\left(\frac{1}{\alpha\sqrt{2}} \right) 
 -\cosh\left(\frac{1}{\alpha\sqrt{2}} \right)\sin\left(\frac{1}{\alpha\sqrt{2}} \right) \Bigg], \\
 d_\alpha & = & \cosh\left(\frac{1}{\alpha\sqrt{2}} \right) \sinh\left(\frac{1}{\alpha\sqrt{2}} \right)
 +\cos\left(\frac{1}{\alpha\sqrt{2}} \right)\sin\left(\frac{1}{\alpha\sqrt{2}} \right).
\end{eqnarray*}

Note that $d_\alpha= \frac{1}{2} \sinh(\frac{\sqrt{2}}{\alpha})+\frac{1}{2}\sin(\frac{\sqrt{2}}{\alpha})>0$. We have:

%-----------------------------------------------------------------------
\begin{theorem}[]
	\label{thm:change_of_rate_negative} 
	Let $ \alpha >0 $ and $\mathit{rc}_\alpha$ as in (\ref{defrc}). Then $ \mathit{rc}_\alpha(x)<0  $ for all $x \in (-1,1)$.
\end{theorem}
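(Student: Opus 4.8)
The plan is to turn the claim into a one--variable inequality via the explicit solution formula \eqref{eq:rate_of_change_3} and then to argue separately for $\alpha$ "not too small" and $\alpha$ small. Since $\mathit{rc}_\alpha$ is even it suffices to treat $x\in[0,1)$; I set $s:=\frac{1}{\alpha\sqrt2}>0$ and $t:=\frac{x}{\alpha\sqrt2}\in[0,s)$. Writing $a_\alpha=\frac{2\alpha^3}{d_\alpha}A$ and $b_\alpha=\frac{2\alpha^3}{d_\alpha}B$ with $A:=\sinh s\cos s+\cosh s\sin s$ and $B:=\cosh s\sin s-\sinh s\cos s$, formula \eqref{eq:rate_of_change_3} becomes $\mathit{rc}_\alpha(x)=\frac{2\alpha^3}{d_\alpha}\bigl(\Psi(t)-d_\alpha\bigr)$ with $\Psi(t):=A\cosh t\cos t+B\sinh t\sin t$, so that (as $d_\alpha>0$) the theorem is equivalent to $\Psi(t)<d_\alpha$ for all $t\in[0,s)$. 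A short computation gives $\Psi(s)=d_\alpha$, $\Psi'(s)=0$, $\Psi''(s)=\sin 2s-\sinh 2s<0$, $d_\alpha-\Psi(0)=(\cosh s-\cos s)(\sinh s-\sin s)>0$, the compact expression
\[
\Psi'(\sigma)=\sinh(s+\sigma)\sin(s-\sigma)-\sinh(s-\sigma)\sin(s+\sigma),
\]
and $A^2+B^2=2(\sinh^2 s+\sin^2 s)$; I would record these in Appendix~\ref{sec:appendix_est_oscill}. In particular $t=s$ is a strict local maximum of $\Psi$ and $\Psi(0)<d_\alpha$, so it only remains to exclude that $\Psi$ overshoots $d_\alpha$ somewhere inside $(0,s)$.

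For $s\le\pi$, i.e. $\alpha\ge\frac{1}{\pi\sqrt2}$, I would prove the stronger fact that $\Psi$ is strictly increasing on $[0,s]$. Let $\sigma\in(0,s)$; then $s-\sigma\in(0,\pi)$, so $\sin(s-\sigma)>0$. If $\sin(s+\sigma)\le 0$, the compact expression for $\Psi'$ gives $\Psi'(\sigma)>0$ at once. If $\sin(s+\sigma)>0$, then also $s+\sigma\in(0,\pi)$, and since $r\mapsto\sin r/\sinh r$ is strictly decreasing on $(0,\pi)$ — its derivative has numerator $\cos r\sinh r-\sin r\cosh r$, which vanishes at $0$ and has derivative $-2\sin r\sinh r<0$ — and $s-\sigma<s+\sigma$, we obtain $\sinh(s+\sigma)\sin(s-\sigma)>\sinh(s-\sigma)\sin(s+\sigma)$, i.e. $\Psi'(\sigma)>0$. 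Hence $\Psi(t)<\Psi(s)=d_\alpha$ for $t\in[0,s)$.

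For $s\ge\pi$, i.e. $0<\alpha\le\frac{1}{\pi\sqrt2}$, monotonicity of $\Psi$ fails in general, and the plan is to combine a local estimate near the endpoint with a crude global bound after splitting $[0,s)=[0,s-\tfrac12]\cup[s-\tfrac12,s)$. On $[s-\tfrac12,s)$, Taylor's theorem together with $\Psi(s)=d_\alpha$ and $\Psi'(s)=0$ gives $\Psi(t)-d_\alpha=(s-t)^2\bigl(\tfrac12\Psi''(s)-\tfrac16\Psi'''(\xi_t)(s-t)\bigr)$ with $\xi_t\in(t,s)$; since $-\Psi''(s)=\sinh 2s-\sin 2s$ grows exponentially while $|\Psi'''|\le 8\cosh^2 s$ on $[0,s]$ (from $|A|,|B|\le\sqrt2\cosh s$ and $|f'(t)|,|g'(t)|\le\sqrt2\cosh t$ for $f(t)=\cosh t\cos t$, $g(t)=\sinh t\sin t$, using $\Psi'''=2(Bf'-Ag')$), the bracket is negative provided $\tanh s-\tfrac23>\tfrac1{2\cosh^2 s}$, which holds for every $s\ge\pi$. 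On $[0,s-\tfrac12]$, Cauchy--Schwarz and the identity $\cosh^2 t\cos^2 t+\sinh^2 t\sin^2 t=\sinh^2 t+\cos^2 t$ give $\Psi(t)\le\sqrt{A^2+B^2}\,\sqrt{\sinh^2 t+\cos^2 t}\le\sqrt2\,\cosh s\,\cosh(s-\tfrac12)$, whereas $d_\alpha=\tfrac12(\sinh 2s+\sin 2s)\ge\sinh s\cosh s-\tfrac12$; hence $\Psi(t)<d_\alpha$ once $\sqrt2\cosh(s-\tfrac12)<\sinh s-\tfrac1{2\cosh s}$, again an elementary inequality valid for $s\ge\pi$. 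Since $s=\pi$ belongs to both regimes, this settles all $\alpha>0$.

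The genuine difficulty is the small--$\alpha$ range of the previous paragraph: there $\mathit{rc}_\alpha$ is no longer monotone — this is exactly the overshoot and oscillation quantified in Theorem~\ref{thm:change_of_rate_oscillation} — so the soft argument of the $s\le\pi$ case is unavailable and one has to balance \emph{quantitatively} the exponentially strong restoring term $-\Psi''(s)=\sinh 2s-\sin 2s$ at the endpoint against the oscillatory remainder, choosing the splitting width and the bounds on $\Psi$ and $\Psi'''$ so that the two sub--estimates match up uniformly in $s\ge\pi$. Establishing the reformulations of the first paragraph — in particular the compact form of $\Psi'$ and the value $\Psi''(s)=\sin 2s-\sinh 2s$ — and carrying out these estimates is precisely the purpose of the auxiliary bounds collected in Appendix~\ref{sec:appendix_est_oscill}; the remainder is routine.
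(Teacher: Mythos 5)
Your proposal is correct, and your key identities check out: the reduction to $\Psi(t)<d_\alpha$ on $[0,s)$, the values $\Psi(s)=d_\alpha$, $\Psi'(s)=0$, $\Psi''(s)=\sin 2s-\sinh 2s$, the compact form $\Psi'(\sigma)=\sinh(s+\sigma)\sin(s-\sigma)-\sinh(s-\sigma)\sin(s+\sigma)$, the bound $|\Psi'''|\le 8\cosh^2 s$ via $\Psi'''=2(Bf'-Ag')$, and the two elementary inequalities $\tanh s-\tfrac23>\tfrac{1}{2\cosh^2 s}$ and $\sqrt2\,\cosh(s-\tfrac12)<\sinh s-\tfrac{1}{2\cosh s}$ do hold for all $s\ge\pi$, so the two regimes indeed cover every $\alpha>0$. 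However, your route is genuinely different from the paper's. The paper does not split cases in $\alpha$: it invokes Corollary \ref{cor:oscillations_max}, which rests on the structural result of Proposition \ref{prop:oscillations_extrema} — writing $h(x)=A\cosh(ax)\cos(ax)-B\sinh(ax)\sin(ax)$ as $E(x)\cos\bigl(x+\varphi(x)\bigr)$ with strictly increasing amplitude $E$ and controlling the phases $\varphi,\psi$, so that the absolute values of successive local extrema strictly increase; since $x=1$ is a local maximum with $h'(1)=0$, $h''(1)<0$, the maximum over $[0,1]$ sits at the endpoint for \emph{every} $a>0$, and the theorem follows in one stroke. Your argument is more elementary and quantitative: for $s\le\pi$ you prove monotonicity of $\Psi$ directly from the monotonicity of $r\mapsto\sin r/\sinh r$ (this is in essence a restricted version of Lemma \ref{lem:calc_2}, which the paper pushes up to $a_c>\pi$ for Theorem \ref{thm:change_of_rate_oscillation}), and for $s\ge\pi$ you trade structure for explicit Taylor and Cauchy--Schwarz bounds near and away from the endpoint. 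What the paper's approach buys is uniformity in $\alpha$, no numerically flavoured threshold inequalities, and a description of the oscillation envelope that is reused for the monotonicity theorem and the remark on the growing number of sign changes of $\mathit{rc}_\alpha'$; what your approach buys is a shorter, self-contained proof of this single theorem that avoids the phase-function analysis altogether, at the price of a case split and two ad hoc (though easily verified) transcendental inequalities.
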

\begin{proof}
	Let us abbreviate $\beta=\frac{1}{\alpha \sqrt{2}}$. For $ x \in [-1,1] $ we compute with the help of Corollary \ref{cor:oscillations_max} from Appendix \ref{sec:appendix_est_oscill} with $ a = \beta $
	\begin{eqnarray*}
		\frac{d_\alpha}{2 \alpha ^3} \, \mathit{rc}_\alpha (x) &= &  \,  - \bigl( \cosh(\beta)  \, \sinh(\beta) + \cos(\beta) \, \sin(\beta)\bigr) \\
		& &  \,  + \bigl( \sinh(\beta)  \, \cos(\beta) + \cosh(\beta) \, \sin(\beta)\bigr) \, \cosh(\beta x) \, \cos(\beta x)  \\
		& &  \,   - \bigl( \sinh(\beta)  \, \cos(\beta) - \cosh(\beta) \, \sin(\beta)\bigr) \, \sinh(\beta x) \, \sin(\beta x)  \\
		& \le &    - \bigl( \cosh(\beta)  \, \sinh(\beta) + \cos(\beta) \, \sin(\beta)\bigr) \\
		& &  \,  + \bigl( \sinh(\beta)  \, \cos(\beta) + \cosh(\beta) \, \sin(\beta)\bigr) \, \cosh(\beta) \, \cos(\beta)  \\
		& &  \,   - \bigl( \sinh(\beta)  \, \cos(\beta) - \cosh(\beta) \, \sin(\beta)\bigr) \, \sinh(\beta) \, \sin(\beta)  =0.
	\end{eqnarray*}
	The statement then follows since equality only holds at $ x= \pm 1 $.
\end{proof}

%-----------------------------------------------------------------------
\begin{theorem}
	\label{thm:change_of_rate_oscillation} 
	Let  $a_c \in (\pi, \frac{3}{2} \pi)$ denote the smallest, strictly positive solution of the equation $\tanh(x)=\tan(x)$ (see Lemma \ref{lem:calc_1} in Appendix B) and $\alpha_{\operatorname{crit}} = \frac{1}{a_c \sqrt{2}}\approx 0.18008$. Then $\mathit{rc}_\alpha'(x)>0$ for all $x \in (0,1)$ and all  
	$\alpha \ge \alpha_{\operatorname{crit}}$. 
	\end{theorem}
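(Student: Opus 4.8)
The plan is to work directly with the explicit formula \eqref{eq:rate_of_change_3} for $\mathit{rc}_\alpha$ and to differentiate it. Writing $\beta = \frac{1}{\alpha\sqrt2}$ and using the identities $\frac{\diff}{\diff x}\bigl(\cosh(\beta x)\cos(\beta x)\bigr) = \beta\bigl(\sinh(\beta x)\cos(\beta x) - \cosh(\beta x)\sin(\beta x)\bigr)$ and $\frac{\diff}{\diff x}\bigl(\sinh(\beta x)\sin(\beta x)\bigr) = \beta\bigl(\cosh(\beta x)\sin(\beta x) + \sinh(\beta x)\cos(\beta x)\bigr)$, I obtain an expression of the shape
\begin{equation*}
\mathit{rc}_\alpha'(x) = \beta\Bigl[ (b_\alpha - a_\alpha)\cosh(\beta x)\sin(\beta x) + (a_\alpha + b_\alpha)\sinh(\beta x)\cos(\beta x)\Bigr].
\end{equation*}
Inserting the formulas for $a_\alpha, b_\alpha$ and factoring out $\frac{2\alpha^3}{d_\alpha}$, both coefficients $a_\alpha\pm b_\alpha$ simplify to a single hyperbolic-times-trigonometric term: one checks that $a_\alpha - b_\alpha = \frac{4\alpha^3}{d_\alpha}\sinh(\beta)\cos(\beta)$ and $a_\alpha + b_\alpha = \frac{4\alpha^3}{d_\alpha}\cosh(\beta)\sin(\beta)$. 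So, up to the positive factor $\frac{4\alpha^3\beta}{d_\alpha}$, we must show that
\begin{equation*}
g(x) := \cosh(\beta)\sin(\beta)\,\sinh(\beta x)\cos(\beta x) - \sinh(\beta)\cos(\beta)\,\cosh(\beta x)\sin(\beta x) > 0 \quad\text{for } x\in(0,1).
\end{equation*}

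The next step is to reduce this to a one-variable monotonicity statement. Using $\sinh t\cos t = \frac12(\sinh(t)\cos(t))$-type product formulas is awkward; instead I would divide through (for $x\in(0,1)$, where all the relevant factors are nonzero) and rewrite $g(x)>0$ as
\begin{equation*}
\frac{\sinh(\beta x)\cos(\beta x)}{\cosh(\beta x)\sin(\beta x)} > \frac{\sinh(\beta)\cos(\beta)}{\cosh(\beta)\sin(\beta)},
\end{equation*}
i.e. $\Psi(\beta x) > \Psi(\beta)$ where $\Psi(t) := \dfrac{\tanh(t)}{\tan(t)}$. Since $0 < \beta x < \beta$, it suffices to prove that $\Psi$ is strictly \emph{decreasing} on $(0,\beta)$. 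Now $\beta = \frac{1}{\alpha\sqrt2} \le \frac{1}{\alpha_{\operatorname{crit}}\sqrt2} = a_c$, and $a_c\in(\pi,\tfrac32\pi)$ is the first positive root of $\tanh(t)=\tan(t)$; on $(0,\pi)$ and on $(\pi, a_c)$ the function $\tan$ does not vanish except... — one must be slightly careful: $\tan$ has a pole at $\tfrac\pi2$ and vanishes at $\pi$, so $\Psi$ is not defined there. The clean way around this is to keep $g$ in product form: $g(x)>0$ is equivalent to $\tanh(\beta x)\cos(\beta x)\cos(\beta)\,\bigl[\text{sign adjustments}\bigr]$, but more robustly I would argue that the function
\begin{equation*}
h(t) := \sinh(t)\cos(t) - \lambda\,\cosh(t)\sin(t), \qquad \lambda := \frac{\sinh(\beta)\cos(\beta)}{\cosh(\beta)\sin(\beta)},
\end{equation*}
satisfies $h(0)=0$, $h(\beta)=0$, and $h$ has no zero in $(0,\beta)$ — the latter via the companion estimates in Appendix~\ref{sec:appendix_est_oscill} (Lemma~\ref{lem:calc_1} and Corollary~\ref{cor:oscillations_max}), which are precisely the tools the paper prepares for exactly this kind of "one sign change on $[0,a]$" statement, and which encode that $a_c$ is the threshold beyond which a second sign change appears. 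Combined with the sign of $h$ near $0$ (where $h(t)\sim (1-\lambda)t$ plus higher order, and $1-\lambda>0$ since $\tanh(\beta)<\beta<\tan(\beta)$ for $\beta\in(0,\tfrac\pi2)$ and a separate check for $\beta\in(\tfrac\pi2,a_c)$) this gives $h>0$, hence $g>0$, hence $\mathit{rc}_\alpha'>0$ on $(0,1)$.

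The main obstacle is the sign/monotonicity analysis of $\Psi(t)=\tanh(t)/\tan(t)$ (equivalently, the no-interior-zero property of $h$) across the pole of $\tan$ at $t=\tfrac\pi2$ and its zero at $t=\pi$: the naive "$\Psi$ is decreasing" is false as a statement about a continuous function because $\Psi$ blows up and changes sign. The resolution is that the inequality we actually need, $g(x)>0$, involves $g$ in the product form above, which is perfectly smooth, and its validity on all of $(0,1)$ for every $\alpha\ge\alpha_{\operatorname{crit}}$ is exactly equivalent to $a_c$ being the \emph{first} positive root of $\tanh=\tan$ — so the real content is already isolated in the appendix lemmas about this transcendental equation, and the theorem follows by assembling them. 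I would therefore spend most of the write-up (i) doing the derivative computation and the simplification of $a_\alpha\pm b_\alpha$ carefully, and (ii) citing Lemma~\ref{lem:calc_1} and Corollary~\ref{cor:oscillations_max} to conclude $h>0$ on $(0,\beta)$, rather than attempting a self-contained monotonicity proof of $\Psi$.
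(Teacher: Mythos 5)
Your reduction is exactly the paper's: differentiate \eqref{eq:rate_of_change_3}, simplify $a_\alpha\pm b_\alpha$ (your formulas $a_\alpha-b_\alpha=\frac{4\alpha^3}{d_\alpha}\sinh(\beta)\cos(\beta)$ and $a_\alpha+b_\alpha=\frac{4\alpha^3}{d_\alpha}\cosh(\beta)\sin(\beta)$ are correct, and keeping the chain-rule factor $\beta$ is if anything more careful than the paper's display), so that up to a positive constant
\begin{displaymath}
\mathit{rc}_\alpha'(x)\;\propto\;\cosh(\beta)\sin(\beta)\,\sinh(\beta x)\cos(\beta x)-\sinh(\beta)\cos(\beta)\,\cosh(\beta x)\sin(\beta x),\qquad \beta=\tfrac{1}{\alpha\sqrt{2}}\le a_c.
\end{displaymath}
At this point the paper simply cites Lemma~\ref{lem:calc_2} with $a=\beta$, evaluated at the argument $\beta x\in(0,\beta)$: its statement is verbatim the positivity you need, and its proof is the case analysis of $\tanh/\tan$ based on Lemma~\ref{lem:calc_1}. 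With that one citation your argument is complete and identical to the paper's.

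Where your own sketch of the finishing step is shaky: first, Corollary~\ref{cor:oscillations_max} is not the right tool --- it asserts that a function of the form $A\cosh(ax)\cos(ax)-B\sinh(ax)\sin(ax)$ with specific $A,B$ attains its maximum at $x=1$ (it is the ingredient for Theorem~\ref{thm:change_of_rate_negative}) and does not deliver the ``no interior zero'' property of your $h$; if you wanted to avoid Lemma~\ref{lem:calc_2} you would have to extract it from the zero-interlacing claims in Proposition~\ref{prop:oscillations_extrema}, which needs additional work. Second, your sign bookkeeping fails for $\beta\in(\pi,a_c]$: there Lemma~\ref{lem:calc_1} gives $\lambda=\tanh(\beta)/\tan(\beta)\ge 1$, so $1-\lambda\le 0$ and $h<0$ near $0$; the desired inequality survives only because $g(x)=\cosh(\beta)\sin(\beta)\,h(\beta x)$ and $\sin(\beta)<0$ flips the sign, so the claim ``$h>0$ on $(0,\beta)$'' as stated is false in that range (and at $\beta=\pi$ the quotient $\lambda$ is undefined, so $g>0$ must be checked directly). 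These case distinctions are precisely what the proof of Lemma~\ref{lem:calc_2} carries out; replacing your assembly of Lemma~\ref{lem:calc_1} and Corollary~\ref{cor:oscillations_max} by a citation of Lemma~\ref{lem:calc_2} turns your proposal into the paper's proof.
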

\begin{proof}
	Let us abbreviate again $\beta=\frac{1}{\alpha \sqrt{2}}$. It follows from  Lemma \ref{lem:calc_2} in Appendix~\ref{sec:appendix_est_oscill} 
	with $ a = \beta $ that for every $x \in (0,1)$
	\begin{eqnarray*}
	\mathit{rc}_\alpha'(x) & = & (b_\alpha - a_\alpha) \cosh(\beta x) \sin(\beta x) + (b_\alpha + a_\alpha) \sinh(\beta x) \cos (\beta x) \\
	& = & \frac{2 \alpha^3}{d_\alpha} \Bigg[ \sinh(\beta x) \cos(\beta x)  \cosh(\beta) \sin(\beta) - \sinh(\beta) \cos(\beta)  \cosh(\beta x) \sin(\beta x) \Bigg] >0
	\end{eqnarray*}
since $0<\beta \leq a_c$.
\end{proof}

\begin{remark}
 The end of the proof of Lemma~\ref{lem:calc_2} shows that for $\alpha\in 
 (0,\alpha_{\operatorname{crit}})$ and $\alpha\searrow 0$, we observe an increasing number of sign changes of $\mathit{rc}_\alpha' $.
\end{remark}

\begin{figure}[h]
	\centering
	\includegraphics[]{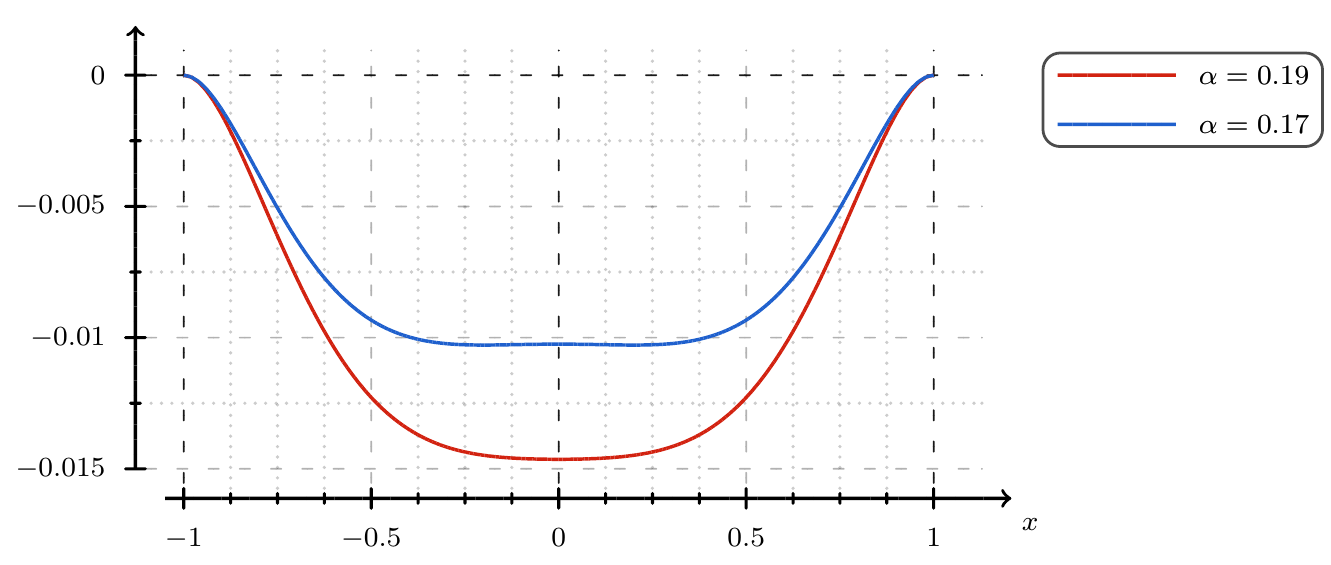}
	\caption{Plots of the rate of change function $ \mathit{rc}_\alpha $ for two different values of $ \alpha  $ around the value $\alpha_{\operatorname{crit}}\approx 0.18008$ in accordance with Theorems \ref{thm:change_of_rate_negative} and \ref{thm:change_of_rate_oscillation}. The function for $ \alpha =0.17 $ develops an oscillatory behaviour with oscillations starting at the origin $ x=0 $.}
	\label{fig:rate_of_change}
\end{figure}

%=======================================================================
\section{Existence of minimisers via gluing techniques}
\label{subsec:exist_gluing} 

We start with a discussion of symmetric positive profile curves satisfying $H \equiv 0$, i.e.
\begin{equation}  \label{minimalsurf}
\frac{1}{u(x)\, \sqrt{ 1+ u'(x)^2 } } - \frac{u''(x)}{ \big(1+u'(x)^2\big)^{3/2}} = 0 \quad \mbox{ in } [-1,1].
\end{equation}
The corresponding solutions are the catenaries
\begin{displaymath}
w_c(x):= \, c \, \cosh \bigg( \frac{x}{c } \bigg), \quad x \in [-1,1]
\end{displaymath}
with boundary values $c \cosh(\frac{1}{c})$ and surface area
\begin{align}  
	\mathscr A (w_ c ) & =\, 2 \pi \int_ {-1} ^1 w_ c( x) \, \sqrt{ 1 + {w_c'(x)}^2 } \, \diff x \nonumber\\
	&=\, 2 \pi c + \pi c^2 \, \sinh \bigg( \frac{2}{c} \bigg)
	\,=\, 2 \pi c\left( 1 + c \, \sinh \bigg( \frac{1}{c} \bigg)\, \cosh \bigg( \frac{1}{c} \bigg)\right)
	.\label{areacat}
\end{align}
A discussion of the function $c \mapsto c \cosh(\frac{1}{c})$ shows that 
\begin{equation}  \label{c0prop}
c \mapsto c \cosh(\frac{1}{c}) \mbox{ is } 
\left\{
\begin{array}{ll}
\mbox{strictly decreasing on } (0,c_0] & \\[2mm]
\mbox{strictly increasing on } [c_0,\infty), &
\end{array}
\right.
\end{equation}
where $c_0 \approx 0.8336$ is the positive solution of
the equation $c_0 =\tanh(\frac{1}{c_0})$. Setting $\alpha_0:= c_0 \cosh(\frac{1}{c_0}) \approx 1.5089$ we infer
that the equation
\begin{displaymath}
c \cosh\left(\frac{1}{c}\right) = \alpha
\end{displaymath}
has two solutions $c_1(\alpha) > c_0 > c_2(\alpha)$ if $\alpha>\alpha_0$, one solution $c_1(\alpha)$ if $\alpha=\alpha_0$ and no solution for $\alpha<\alpha_0$.   Although presumably folklore and  asymptotically obvious
for $\alpha\to \infty$, we could not  locate an easy reference for the 
following statement which is quite important in what follows:
\begin{equation}\label{eq:small_large_catenoid}
\mathscr A(w_{c_1(\alpha)}) < \mathscr A(w_{c_2(\alpha)}), \quad \alpha>\alpha_0.
\end{equation}
For the reader's convenience we give a proof in Appendix~\ref{sec:small_large_catenoid}.

In what follows we shall write
\begin{equation}  \label{defcalpha}
c_{\alpha}:=c_1(\alpha), \; \; v_{\alpha}: =w_{c_\alpha}, \; \; \hat \varepsilon_\alpha:= \frac{1}{4 c_{\alpha}^2}.
\end{equation}
In particular it follows from (\ref{c0prop}) that
\begin{equation} \label{calphamon}
(\alpha_0,\infty) \ni \alpha \mapsto c_\alpha \quad \mbox{ is strictly increasing}.
\end{equation}
In order to make the role of $v_\alpha$ in the minimisation of $\mathscr A$ more precise it is convenient to relax the class of profile curves and consider for a given boundary value $ \alpha >0 $
\begin{align*}
	\cC_ \alpha  \, := \, \Big\{ \gamma \in C^{0,1}\big([-1,1], \mathbb{R}^2\big) \, \colon \, \gamma_2(t) \ge 0 \, ,~ \gamma(\pm 1) =(\pm 1,\alpha),~ \dot \gamma (t) \not= 0 \mbox{ a.e. in } [-1,1]   \Big\},
\end{align*}
where the area functional is now given by
\begin{align*}
	\mathscr A (\gamma )  \, := \,  2 \pi \int_ {-1}^1 \gamma_2 (t) \, | \dot \gamma (t) | \, \diff t.
\end{align*}
In order to formulate the main result concerning the minimisation of $\mathscr A$ over $\cC_\alpha$ we introduce for $\alpha>0$ the Goldschmidt 
solution $\gamma_\alpha$ as a $C^{0,1}$ parametrisation of
the polygon $P_- Q_- Q_+ P_+$, where $P_{\pm}=(\pm 1,\alpha), Q_{\pm}=(\pm 1,0)$.
As a geometric object the Goldschmidt solution  corresponds to two disks with radius $\alpha$ and centers  $Q_-,Q_+$ and its surface area is given by
$\mathscr A(\gamma _ \alpha ) \,=\, 2 \pi \alpha ^2$. Appendix~\ref{sec:small_large_catenoid}   shows also  that $\mathscr A(v_{\alpha})=\mathscr A(\gamma_\alpha)$ if and only if
$\alpha = \alpha_m = c_m \cosh(\frac{1}{c_m})\approx 1.895$, where $c_m\approx 1.564$ is the unique solution of the equation
\begin{displaymath}
\frac{2}{c}   =  1 + e ^ {-2/ c}.
\end{displaymath}
For $\alpha>\alpha_m$ we have $\mathscr A(v_{\alpha})< \mathscr A(\gamma_\alpha)$ while $\mathscr A(\gamma_\alpha)< \mathscr A(v_\alpha)$ for $\alpha<\alpha_m$.
The following result is then a special case of Theorem 1 in \cite[Chapter\;8,\;Section\;4.3]{GiaquintaHildebrandt2004}:

%-----------------------------------------------------------------------
\begin{theorem}[Absolute minimisers of the area functional] \label{thm:absoluteminarea}
	For every $\alpha>0$ the variational problem
	\begin{align*}
		\min_{\gamma \in \cC_\alpha} \mathscr A( \gamma )\,=\, 2 \pi \int_ {-1}^1 \gamma_2(t) \, | \dot \gamma (t) | \, \diff t   
	\end{align*}
	has a solution. This solution is  furnished by the catenary $t \mapsto (t,v_{\alpha}(t))$ if $\alpha> \alpha_m$, the Goldschmidt solution $\gamma_\alpha$ if $\alpha < \alpha_m$
and both of them if $\alpha=\alpha_m$. Except for reparametrisation there are no further solutions.
\end{theorem}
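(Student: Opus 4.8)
The plan is to reduce the statement to the classical theory of minimal surfaces of revolution and then to settle the case distinction purely by the explicit area comparisons already prepared above. First I would invoke the general result of Giaquinta--Hildebrandt: for the functional $\mathscr A(\gamma) = 2\pi \int_{-1}^1 \gamma_2(t)\, |\dot\gamma(t)|\,\diff t$ on the relaxed class $\cC_\alpha$, a minimiser exists, and every minimiser is, up to reparametrisation, either a graph $t \mapsto (t,w(t))$ with $w$ solving the minimal-surface equation \eqref{minimalsurf} together with $w(\pm 1)=\alpha$ --- hence one of the catenaries $w_{c_1(\alpha)}$, $w_{c_2(\alpha)}$ --- or the Goldschmidt solution $\gamma_\alpha$, which is admissible precisely because the relaxation only requires $\gamma_2 \ge 0$. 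This is the substantial input, and I would merely check that our admissible class, the boundary data $\gamma(\pm 1)=(\pm 1,\alpha)$ and the non-degeneracy condition $\dot\gamma\neq 0$ a.e. match the hypotheses in \cite[Chapter\;8,\;Section\;4.3]{GiaquintaHildebrandt2004}.

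Granting this dichotomy, the proof becomes a finite comparison among at most three explicit competitors. By \eqref{c0prop}, the equation $c\cosh(1/c)=\alpha$ has the two solutions $c_1(\alpha)>c_0>c_2(\alpha)$ for $\alpha>\alpha_0$, exactly one for $\alpha=\alpha_0$, and none for $\alpha<\alpha_0$; hence for $\alpha<\alpha_0$ the only candidate is $\gamma_\alpha$ and there is nothing more to prove. For $\alpha\ge\alpha_0$, \eqref{eq:small_large_catenoid} gives $\mathscr A(w_{c_1(\alpha)})<\mathscr A(w_{c_2(\alpha)})$, so the inner catenary $w_{c_2(\alpha)}$ is never a minimiser, and it remains to compare $\mathscr A(v_\alpha)=\mathscr A(w_{c_1(\alpha)})$, computed in \eqref{areacat}, with $\mathscr A(\gamma_\alpha)=2\pi\alpha^2$. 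The analysis in Appendix~\ref{sec:small_large_catenoid} identifies $\alpha_m$ as the unique value where these two areas agree, with $\mathscr A(v_\alpha)<2\pi\alpha^2$ for $\alpha>\alpha_m$ and $\mathscr A(v_\alpha)>2\pi\alpha^2$ for $\alpha_0\le\alpha<\alpha_m$. Combining the three regimes and using $\alpha_m>\alpha_0$, one obtains that $v_\alpha$ is the unique minimiser for $\alpha>\alpha_m$, the Goldschmidt solution for $\alpha<\alpha_m$, and both of them --- and, by the classification, no others up to reparametrisation --- at $\alpha=\alpha_m$.

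The step I expect to be the real obstacle is the existence-and-classification input itself, if one wished to prove it from scratch rather than cite it. One would run the direct method on $\cC_\alpha$ (reparametrising competitors with constant speed to convert a length bound into a $W^{1,\infty}$ bound, extracting a uniformly convergent subsequence, and using lower semicontinuity of $\mathscr A$), and then handle the possible \emph{degeneration} of the limit curve towards the axis: the limit need not satisfy $\dot\gamma\neq 0$ a.e. and may acquire a segment on $\{\gamma_2=0\}$, which is exactly the mechanism by which the Goldschmidt solution appears. Showing that a non-degenerate minimiser is regular and solves \eqref{minimalsurf} (hence equals some $w_c$), together with the elementary fact that no competitor can undercut both $v_\alpha$ and $\gamma_\alpha$, is then routine. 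Since all of this is contained in the cited theorem, in the write-up I would keep the first paragraph short and devote the space to the area comparison, deferring the computation of $\alpha_m$ to Appendix~\ref{sec:small_large_catenoid}.
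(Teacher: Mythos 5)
Your proposal is correct and follows essentially the same route as the paper: the paper gives no independent proof but quotes the result as a special case of Theorem 1 in \cite[Chapter\;8,\;Section\;4.3]{GiaquintaHildebrandt2004}, with the threshold $\alpha_m$ and the comparison $\mathscr A(w_{c_1(\alpha)})<\mathscr A(w_{c_2(\alpha)})$ prepared beforehand via \eqref{areacat}, \eqref{c0prop}, \eqref{eq:small_large_catenoid} and Appendix~\ref{sec:small_large_catenoid}. Your only deviation is organisational --- citing Giaquinta--Hildebrandt for existence plus the catenary/Goldschmidt dichotomy and then performing the finite area comparison explicitly --- which matches the paper's intent and uses exactly the same ingredients.
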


We use the above result in order to prove the following lemma.

\begin{lemma} \label{step1} Suppose that $\alpha \geq \alpha_m$.
For $u \in N_\alpha$ there exists $v \in N_\alpha$  such that
$v'(x)<v_{\alpha}'(x)$ for all $x \in (0,1]$ and $\mathscr H_\varepsilon(v) \leq \mathscr H_\varepsilon(u)$. If $u$ has finitely many critical points, then the same holds for $v$.
In particular we have that $v(x) > v_\alpha(x)$ for all $x \in (-1,1)$.
\end{lemma}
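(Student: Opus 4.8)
The plan is to modify a given $u\in N_\alpha$ by "cutting out" the portion where its profile lies above (or is too steep compared to) the catenary $v_\alpha$, and replacing it by an arc of $v_\alpha$ (and possibly a flat piece), using the fact that $v_\alpha=w_{c_\alpha}$ has $H\equiv 0$, so that any inserted catenary piece contributes nothing to $\mathscr W$ while, because $\alpha\ge\alpha_m$, $v_\alpha$ is the \emph{absolute} area minimiser among curves in $\cC_\alpha$ by Theorem \ref{thm:absoluteminarea}. Concretely, I would first observe that if $u'(x)<v_\alpha'(x)$ already holds for all $x\in(0,1]$ there is nothing to do; otherwise, set $x_1:=\sup\{x\in(0,1]: u'(x)\ge v_\alpha'(x)\}$. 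On $[x_1,1]$ the function $u$ is less steep than $v_\alpha$, and since $u(1)=v_\alpha(1)=\alpha$ with $u'(1)=0<v_\alpha'(1)$ this forces $u(x)>v_\alpha(x)$ on a left-neighbourhood of $1$; I would keep $u$ on $[x_1,1]$ and glue in, on $[0,x_1]$, a piece of a suitably translated/chosen catenary together with a horizontal segment near $x=0$ so that the resulting curve is again even, $C^1$ at $x_1$ with $H\equiv0$ on the glued part, lies in $N_\alpha$, and satisfies $v'(x)<v_\alpha'(x)$ on $(0,1]$.

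The key steps, in order, are: (1) make the matching precise — choose the parameter of the glued catenary so that it passes through $(x_1,u(x_1))$ with slope $u'(x_1)$, and check via the monotonicity discussed around \eqref{c0prop}, \eqref{calphamon} that such a catenary exists and stays positive; near $x=0$ close it off with the horizontal line at its minimum height so evenness and $u'(0)=0$ hold, and verify $v\in H^2$ (the $C^1$ gluing at $x_1$ suffices, the corner at $0$ being smooth by symmetry). (2) Bound the Willmore energy: $\mathscr W(v)=\mathscr W(u\text{ restricted to }[x_1,1])\le\mathscr W(u)$ since the glued part has $H\equiv0$ and, using \eqref{equiv} with the boundary terms, $\mathscr W$ restricted to $[x_1,1]$ does not exceed $\mathscr W(u)$ — here I must be careful that the boundary contribution $-2\,u'/\sqrt{1+u'^2}$ at the gluing point has the right sign, which it does because $u'(x_1)=v_\alpha'(x_1)>0$ and $u'(1)=0$. (3) Bound the area: the curve obtained from $v$ on $[0,1]$ is a competitor in $\cC_\alpha$ (after concatenating with the Goldschmidt-type vertical segment at $x=1$, or directly comparing), so by Theorem \ref{thm:absoluteminarea} its area is at least $\mathscr A(v_\alpha)$; but we actually need $\mathscr A(v)\le\mathscr A(u)$, which I would get not from optimality of $v_\alpha$ but from a direct comparison on $[x_1,1]$ (where $v=u$) plus the observation that on $[0,x_1]$ the glued piece has smaller area than the corresponding piece of $u$, again because it is a minimal surface spanning the same "aperture". (4) Deduce $v(x)>v_\alpha(x)$ on $(-1,1)$: from $v'<v_\alpha'$ on $(0,1]$ and $v(1)=v_\alpha(1)=\alpha$ one integrates backwards to get $v>v_\alpha$ on $[0,1)$, then use evenness. (5) The statement about finitely many critical points is immediate since the glued piece (a catenary arc plus one horizontal segment) contributes only the single critical point at $x=0$, and $v=u$ on $[x_1,1]$.

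I expect the main obstacle to be step (2)–(3): controlling the energy after gluing while simultaneously guaranteeing the \emph{strict} inequality $v'<v_\alpha'$ on all of $(0,1]$ and keeping $v>0$. The naive choice of glued catenary might not satisfy $v'<v_\alpha'$ right up to $x=1$ if $x_1$ is close to $1$, or might dip to zero if $u(x_1)$ is small; the resolution is likely an intermediate value / continuity argument, continuously deforming between "keep $u$" and "the full catenary $v_\alpha$" along a one-parameter family of glued competitors and selecting a parameter at which all constraints hold with the energy still below $\mathscr H_\varepsilon(u)$, exploiting that $\mathscr H_\varepsilon(v_\alpha)\le\mathscr H_\varepsilon(u)$ would follow from $\mathscr W(v_\alpha)=\mathscr W$ of a catenary and minimality of $\mathscr A(v_\alpha)$ — though since $v_\alpha\notin N_\alpha$ in general (its boundary slope is nonzero), one cannot simply take $v=v_\alpha$, which is exactly why the gluing near the endpoints is needed. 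Verifying that the boundary terms from \eqref{equiv} accumulated at the two gluing interfaces have a favourable sign is the delicate computational point.
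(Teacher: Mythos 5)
Your overall direction (insert a zero--mean--curvature catenary arc in the middle and use $\alpha\ge\alpha_m$ for the area part) is the right one, but the concrete construction you propose does not work, for two reasons. First, the gluing point you choose makes the conclusion fail. At your $x_1$ (the last point where $u'\ge v_\alpha'$) one has $u'(x_1)=v_\alpha'(x_1)$ and, integrating $u'<v_\alpha'$ back from $x=1$, also $u(x_1)>v_\alpha(x_1)$. If you now fit a translated catenary $x\mapsto c\cosh\bigl(\frac{x-d}{c}\bigr)$ through $(x_1,u(x_1))$ with slope $v_\alpha'(x_1)$, the slope condition gives $\frac{x_1-d}{c}=\frac{x_1}{c_\alpha}$ and the height condition then forces $c>c_\alpha$; consequently $\frac{x-d}{c}>\frac{x}{c_\alpha}$ for $x<x_1$, i.e.\ your glued arc is \emph{strictly steeper} than $v_\alpha$ just to the left of $x_1$, so $v'<v_\alpha'$ on $(0,1]$ is violated (and at $x_1$ itself you only get equality, whereas the lemma, and its use in Lemma \ref{step2}, needs the strict inequality). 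The paper circumvents exactly this by staying inside the centered family $w_c$: it takes $\hat c:=\inf\{c\ge c_\alpha\,:\,w_c>u \mbox{ on } [x_0,1]\}$, shows $\hat c>c_\alpha$ and that the contact point $x_1$ lies in the open interval $(x_0,1)$, hence is a point of tangency, and replaces $u$ only on $[-x_1,x_1]$; then $u$ is kept on $[x_1,1]\subset(x_0,1]$ where $u'<v_\alpha'$ already holds strictly, $w_{\hat c}'<v_\alpha'$ because $\hat c>c_\alpha$, and no horizontal cap is needed since $w_{\hat c}$ is even with $w_{\hat c}'(0)=0$.

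Second, your energy bookkeeping has a genuine gap. The horizontal cap you insert is a piece of cylinder, which has $H=\frac{1}{2c}\neq0$ and therefore a strictly positive Willmore contribution that your step (2) ignores (your worry about boundary terms in \eqref{equiv} is, by contrast, unnecessary: the Willmore integrand is pointwise nonnegative, so dropping the inner part of $u$ and inserting an $H\equiv0$ arc can only decrease $\mathscr W$). More seriously, in step (3) you justify $\mathscr A(v)\le\mathscr A(u)$ by the principle that a minimal surface ``spanning the same aperture'' has smaller area; this is false in general -- zero mean curvature only means criticality, and e.g.\ the large catenoid $w_{c_2(\alpha)}$ (and for $\alpha<\alpha_m$ even the small one) is beaten by the Goldschmidt solution. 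The area comparison is precisely where the hypothesis $\alpha\ge\alpha_m$ must enter: the paper rescales, setting $\hat u(x)=\frac{1}{x_1}u(x_1x)$, checks via $\frac{\hat c}{x_1}>\hat c>c_\alpha\ge c_0$ and \eqref{calphamon} that the rescaled boundary value $\frac{\hat c}{x_1}\cosh\bigl(\frac{x_1}{\hat c}\bigr)$ exceeds $\alpha\ge\alpha_m$, and only then invokes Theorem \ref{thm:absoluteminarea} to conclude that the rescaled catenoid is the \emph{global} area minimiser for those boundary circles, whence $\mathscr H_\varepsilon(v)\le\mathscr H_\varepsilon(u)$. Without this step (which you explicitly decline to use) the area inequality is unsubstantiated, and your continuity/deformation fallback at the end does not supply it either.
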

\begin{proof} Since $u'(1)=0<v_\alpha'(1)$, there exists $x_0 \in [0,1)$ such that %]$
\begin{equation} \label{ustrich}
\forall x \in (x_0,1]:\quad 
u'(x)< v_\alpha'(x)\quad \mbox{ and }  \quad u'(x_0)= v_\alpha'(x_0).
\end{equation}
If $x_0=0$ set $v=u$,  otherwise define
\begin{displaymath}
\hat c:= \inf \lbrace c \geq c_\alpha \, | \, w_c(x) > u(x) \mbox{ for all } x \in [x_0,1] \rbrace.
\end{displaymath}
Clearly,
\begin{equation}  \label{whatc}
\forall x \in [x_0,1]:\quad 
w_{\hat c}(x) \geq u(x), \quad \mbox{ and } \quad  
\exists x_1 \in [x_0,1]:\quad w_{\hat c}(x_1)=u(x_1).
\end{equation}
Suppose that $\hat c=c_\alpha$. Then, in view of (\ref{defcalpha}) and  (\ref{whatc}) we have that  $v_\alpha(x) \geq u(x)$ for $x \in [x_0,1]$ and hence $v_\alpha'(1) \leq u'(1)=0$, a contradiction, so that
$\hat c> c_\alpha$. Assume next that  $x_1 \in \lbrace x_0,1 \rbrace$. Using again (\ref{defcalpha}), (\ref{whatc}) as well as (\ref{ustrich}) we infer that 
\begin{eqnarray*}
\sinh \bigl( \frac{x_0}{c_\alpha} \bigr) = v_\alpha'(x_0) = u'(x_0) \leq w_{\hat c}'(x_0)= \sinh \bigl( \frac{x_0}{\hat c} \bigr), && \mbox{ if } x_1=x_0; \\
c_\alpha \cosh \bigl( \frac{1}{c_\alpha} \bigr)= \alpha = u(1) = w_{\hat c}(1)= \hat c \cosh \bigl( \frac{1}{\hat c} \bigr), && \mbox{ if } x_1 = 1;
\end{eqnarray*}
which is in either case a contradiction to $\hat c > c_{\alpha}$. As a result we deduce  that $x_1 \in (x_0,1)$, which implies that $w_{\hat c}'(x_1) = u'(x_1)$.
The function
\begin{displaymath}
v(x):=
\left\{
\begin{array}{ll}
u(x), & x \in [-1,-x_1] \cup [x_1,1], \\[2mm]
w_{\hat c}(x), & x \in (-x_1,x_1)
\end{array}
\right.
\end{displaymath}
\begin{figure}[h]
		\centering
		\includegraphics[]{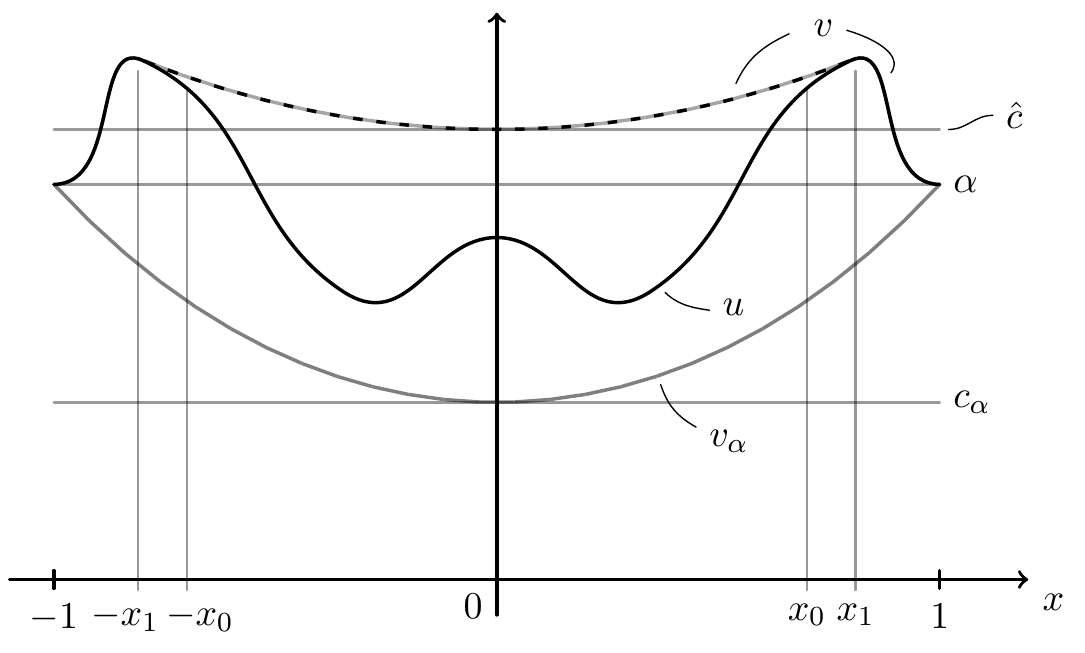}
		\caption{Construction of $v$  by attaching an appropriate catenary to $u$ at $ -x_1 $ and $ x_1 $.}
		\label{fig:existence_conv_to_cat_upper_bound_derivative}
\end{figure}
(see Figure \ref{fig:existence_conv_to_cat_upper_bound_derivative}) then belongs to $N_\alpha$. Since the catenoid given by $w_{\hat c}$ has zero mean curvature we have
\begin{eqnarray*}
\lefteqn{ \hspace{-1cm}  \mathscr H_{\varepsilon}(v) - \mathscr H_{\varepsilon}(u)  \leq 2 \pi \varepsilon \int_{-x_1}^{x_1} w_{\hat c}(x) \sqrt{1+ w_{\hat c}'(x)^2} \, \diff x - 2 \pi \varepsilon \int_{-x_1}^{x_1} u (x) \sqrt{1+ u'(x)^2} \, \diff x } \\
& = & 2 \pi \varepsilon x_1 \int_{-1}^1 w_{\hat c}(x_1 x) \sqrt{1+ w_{\hat c}'(x_1 x)^2} \, \diff x - 2 \pi  \varepsilon x_1 \int_{-1}^1 u(x_1 x) \sqrt{1+ u'(x_1 x)^2} \, \diff x \\
& = & 2 \pi  \varepsilon x_1^2 \left\{  \int_{-1}^1 w_{\hat c/x_1}(x) \sqrt{1+ w_{\hat c/x_1}'(x)^2 } \, \diff x -  \int_{-1}^1 \hat u(x) \sqrt{1+ \hat u'(x)^2 } \, \diff x \right\},
\end{eqnarray*}
where $\hat u(x)= \frac{1}{x_1} u(x_1 x), x \in [-1,1]$. Since $\frac{\hat c}{x_1} > \hat c > c_\alpha \geq c_0$, it follows from (\ref{calphamon}) that
\begin{displaymath}
\frac{\hat c}{x_1} \cosh(\frac{x_1}{\hat c}) > c_{\alpha} \cosh(\frac{1}{c_\alpha}) = \alpha \geq \alpha_m
\end{displaymath}
and Theorem \ref{thm:absoluteminarea} implies that  $w_{\hat c/x_1}$ is a global minimum for the area functional in the class $\mathscr C_{\hat \alpha}$ with $\hat \alpha=\frac{\hat c}{x_1} \cosh(\frac{x_1}{\hat c})$.
Taking into account that  
$w_{\hat c/x_1}(\pm 1)\stackrel{\mbox{\scriptsize def}}{=}\frac{\hat c}{x_1}\cosh\left(\frac{x_1}{\hat c} \right)
=\frac{1}{x_1} w_{\hat c} (x_1)
\stackrel{(\ref{whatc})}{=}\frac{1}{x_1} u(x_1)
\stackrel{\mbox{\scriptsize def}}{=}\hat u(\pm 1)
$ 
we hence infer that
\begin{displaymath}
\int_{-1}^1 w_{\hat c/x_1}(x) \sqrt{1+ w_{\hat c/x_1}'(x)^2 } \, \diff x \leq  \int_{-1}^1 \hat u(x) \sqrt{1+ \hat u'(x)^2 } \, \diff x
\end{displaymath}
giving $\mathscr H_\varepsilon(v) \leq \mathscr H_\varepsilon(u)$. We see from the construction that $v$ has finitely many critical points if this was the case for $u$. Finally, the inequality
$v(x) > v_\alpha(x), \, x \in (-1,1)$ easily follows by integration.
\end{proof}

%-----------------------------------------------------------------------
\begin{lemma}  \label{step2} Let $\alpha \geq \alpha_m$ and $\varepsilon \geq  \hat \varepsilon_\alpha$. Suppose that
$u \in N_\alpha$ has finitely many critical points and satisfies $u'(x)<v_{\alpha}'(x)$ for all $ x \in (0,1]$. Then there exists $v \in N_\alpha$ such that 
$0 \leq v'(x) < v_{\alpha}'(x)$ for $x \in (0,1]$ and $\mathscr H_{\varepsilon}(v) \leq \mathscr H_{\varepsilon}(u)$. In particular, we have that 
$v_{\alpha}(x)< v(x) \leq \alpha$ for all $x \in (-1,1)$.
\end{lemma}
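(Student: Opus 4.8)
The plan is to leave $u$ unchanged outside the (finitely many) intervals on which $u'<0$ and to replace each descending portion by a nondecreasing $C^{1}$ arc built from the two $\mathscr W$--cheap model profiles, horizontal segments (Helfrich cylinders) and catenaries, the surgery being arranged so that the new profile $v$ lies pointwise below $u$ on the modified set. The starting observation is that $u'<v_\alpha'$ on $(0,1]$ together with $u(1)=v_\alpha(1)=\alpha$ yields, after integration, $u>v_\alpha$ on $(-1,1)$, hence $u(x)>v_\alpha(x)=c_\alpha\cosh(x/c_\alpha)\ge c_\alpha$ for all $x\in(-1,1)$. Since $\varepsilon\ge\hat\varepsilon_\alpha=\tfrac{1}{4c_\alpha^{2}}$ is equivalent to $\tfrac{1}{2\sqrt\varepsilon}\le c_\alpha$, every profile value entering the estimates then lies on the increasing branch of the weight function introduced below; this is exactly where the hypothesis $\varepsilon\ge\hat\varepsilon_\alpha$ gets used.

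For the energy bookkeeping I would first record that, on any cell $[a,b]$ whose endpoints are critical points of a profile $w$, identity (\ref{equiv}) makes the boundary terms drop out, so that the contribution of $[a,b]$ to $\mathscr H_\varepsilon(w)$ equals $\tfrac{\pi}{2}\int_a^b g\!\left(w\sqrt{1+(w')^{2}}\right)\diff x+\tfrac{\pi}{2}\int_a^b\tfrac{w\,(w'')^{2}}{(1+(w')^{2})^{5/2}}\diff x$, where $g(s):=\tfrac{1}{s}+4\varepsilon s$. The function $g$ is convex with minimum at $\tfrac{1}{2\sqrt\varepsilon}\le c_\alpha$, hence strictly increasing on $[c_\alpha,\infty)$. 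Applied to $w=u$ over each monotonicity cell cut out by its interior critical points, and using $u\sqrt{1+(u')^{2}}\ge u>c_\alpha$ together with nonnegativity of the bending term, this bounds $\mathscr H_\varepsilon(u)$ from below by $\tfrac{\pi}{2}\int_{-1}^{1}g(u)\,\diff x$. Applied to the replacement, a horizontal piece at level $\mu$ contributes exactly $\tfrac{\pi}{2}\int g(\mu)\,\diff x$, while catenary pieces contribute no Willmore energy at all (vanishing mean curvature) and their area is controlled by Theorem~\ref{thm:absoluteminarea} just as in the proof of Lemma~\ref{step1}.

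For the surgery itself, if $u'\ge0$ on $(0,1]$ already I would take $v=u$; the ``in particular'' then follows by integrating $v'<v_\alpha'$ from $x$ to $1$ (giving $v>v_\alpha$) and from $v'\ge0$, $v(1)=\alpha$ (giving $v\le\alpha$). Otherwise, for a suitably chosen cut point $x_1$ beyond the last descent of $u$ (so that $u'\ge0$ on $[x_1,1]$), replace $u$ on $[-x_1,x_1]$ by an even curve $\psi$ that is horizontal at some level $\mu$ around the origin and then rises to meet $u$ at $\pm x_1$ along a catenary; the cut point $x_1$, the level $\mu$ and the junction point are singled out — after requiring the catenary to have its vertex where the flat piece ends, so that the join there is automatically $C^{1}$ — by an infimum argument of the same type as the one defining $\hat c$ in Lemma~\ref{step1}, chosen so that in addition $\psi\le u$ and $\psi$ is nondecreasing on $[0,x_1]$. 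Granting this, $\mathscr H_\varepsilon(v)\le\mathscr H_\varepsilon(u)$ follows by combining the lower bound for $\mathscr H_\varepsilon(u)$ with the upper bound for $\mathscr H_\varepsilon(v)$, using monotonicity of $g$ on $[c_\alpha,\infty)$ and $\psi\le u$ there; moreover $v=u$ on $[x_1,1]$ with $u'\ge0$ and $v'=\psi'\ge0$ on $[0,x_1]$, while the strict bound $v'<v_\alpha'$ is inherited from $u$ on $[x_1,1]$ and follows on $[0,x_1]$ from $\psi\le u<\alpha$ and the shape of $v_\alpha$. If $x_1$ is still preceded by a descent of $u$ one iterates the step, which terminates since $u$, and each successive profile, has finitely many critical points.

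The step I expect to be the genuine obstacle is producing the gluing data: one must choose $x_1$, $\mu$ and the vertex/tangency points so that the flat-plus-catenary arc is simultaneously $C^{1}$ (so $v\in N_\alpha$), nondecreasing on $[0,x_1]$, everywhere $\le u$, even, and energetically favourable — all at once — which requires a careful extension of the $\hat c$--construction and of the area comparison via Theorem~\ref{thm:absoluteminarea} used in Lemma~\ref{step1}. The energy estimate proper is, by contrast, essentially forced once (\ref{equiv}) is invoked: it reduces to $g(\psi)\le g(u)$ on the increasing branch of $g$, which is available precisely because $\varepsilon\ge\hat\varepsilon_\alpha$ and $u>v_\alpha\ge c_\alpha$.
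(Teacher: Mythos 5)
Your proposal leaves the decisive step unproved, and it is precisely the step where it deviates from what is actually needed. You replace the whole inner region $[-x_1,x_1]$ by a flat-plus-catenary arc $\psi$, but (i) you never construct the gluing data (you yourself flag the existence of a $C^1$, nondecreasing arc lying below $u$ and tangent to it at $x_1$ as ``the genuine obstacle''), and (ii) even granting the surgery, the energy comparison on the catenary piece is not ``essentially forced''. Your pointwise argument via monotonicity of $g(s)=\tfrac1s+4\varepsilon s$ requires $\psi\sqrt{1+(\psi')^2}\le u\sqrt{1+(u')^2}$, not merely $\psi\le u$; since $\psi'$ and $u'$ only agree at the junction, the catenary may well be steeper than $u$ where it lies only slightly below it, and the inequality is unjustified. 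The alternative route you invoke --- zero Willmore energy of the catenary plus the area comparison of Theorem \ref{thm:absoluteminarea} ``as in Lemma \ref{step1}'' --- is not available either: in Lemma \ref{step1} the catenary and $u$ share \emph{both} endpoints of the replaced interval (tangential touching at $\pm x_1$), which is what makes the theorem applicable after rescaling; here $\psi$ and $u$ differ at the inner junction $x_f$ (where $\psi(x_f)=\mu\le u(x_f)$), the comparison arc is not a single catenary over a symmetric interval, and the rescaled boundary-value threshold $\ge\alpha_m$ is unchecked. Moreover, in the $H^2$ form the boundary terms of \eqref{equiv} at $x_f$ do not cancel, since $u'(x_f)$ need not vanish. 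A further flaw: you use $u<\alpha$ to get $v'<v_\alpha'$ on $[0,x_1]$, but the hypotheses only bound $u'$ from above by $v_\alpha'$, so $u$ can exceed $\alpha$ in the interior; that auxiliary claim is false in general.

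The paper's proof avoids all of this and uses no catenaries in this step. On a maximal descent interval $(a,b)\subset(0,1)$ with $u'(a)=u'(b)=0$ one inserts the \emph{cylinder} at height $u(b)$ over $(a,b)$ and rigidly translates $u|_{[-a,a]}$ down by $u(a)-u(b)$; the zero slopes at $a,b$ give $v\in H^2$, and by construction $v\le u$ with $v'\in\{0,u'\}$ pointwise. Then $v\sqrt{1+(v')^2}\le u\sqrt{1+(u')^2}$ holds everywhere (using $u\ge u(b)$ on $[a,b]$), the lower bound $v\ge v_\alpha\ge c_\alpha=\tfrac{1}{\sqrt{4\hat\varepsilon_\alpha}}\ge\tfrac{1}{\sqrt{4\varepsilon}}$ puts everything on the increasing branch of $z\mapsto 4\varepsilon z+\tfrac1z$, and the bending densities compare pointwise since $v''\in\{0,u''\}$ with $v\le u$; inserting into \eqref{eq:helfrich_functional1} gives $\mathscr H_\varepsilon(v)\le\mathscr H_\varepsilon(u)$, and one iterates over the finitely many critical points. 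Your convexity/monotonicity observation about $g$ and the role of $\varepsilon\ge\hat\varepsilon_\alpha$ is the right mechanism, but attached to the catenary surgery it does not close; with the cylinder-plus-translation surgery it does.
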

\begin{proof} Let us assume that $u'(x_0)<0$ for some $x_0 \in (0,1)$. Then there exists an interval $[a,b] \subset [0,1]$ such that
\begin{displaymath}
\forall x \in [b,1]:\quad 
u'(x) \geq 0 ;\qquad \forall x \in (a,b): \quad u'(x) < 0; \qquad u'(a)=u'(b)=0.
\end{displaymath}
Let us define
\begin{equation}  \label{defv}
v(x):= 
\left\{
\begin{array}{ll}
u(x), & x \in [-1,-b] \cup [b,1]; \\[2mm]
u(b), & x \in (-b,-a) \cup (a,b); \\[2mm]
u(x)-(u(a)-u(b)), & x \in [-a,a],
\end{array}
\right.
\end{equation}
see Figure \ref{fig:existence_conv_to_cat_lower_bound_derivative}. Clearly, $v \in H^2(-1,1)$ is even with
	\begin{figure}[h]
		\centering
		\includegraphics[]{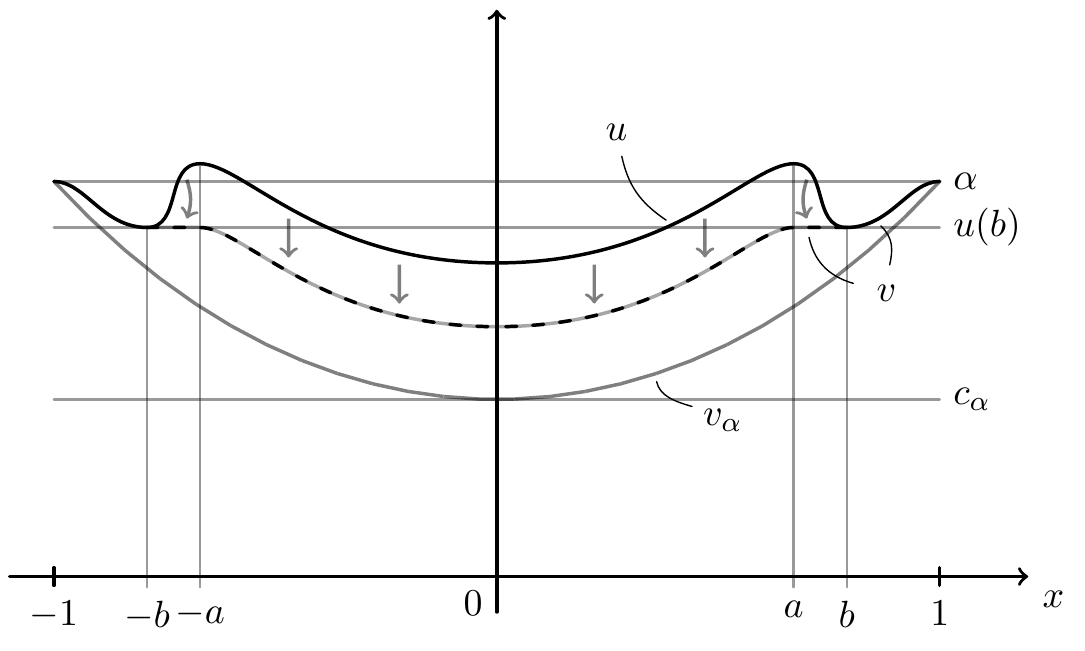}
		\caption{Construction of the function $v$ from $u$ by inserting a cylinder in the intervals $ [-b,-a], ~[a,b] $ and translating the inner part to match the cylinder.}
		\label{fig:existence_conv_to_cat_lower_bound_derivative}
	\end{figure}
\begin{equation} \label{vstrich}
v'(x)= 
\left\{
\begin{array}{ll}
u'(x), & x \in [-1,-b] \cup [b,1]; \\[2mm]
0, & x \in (-b,-a) \cup (a,b); \\[2mm]
u'(x), & x \in [-a,a],
\end{array}
\right.
\end{equation}
so that $v'(x)<v_\alpha'(x)$ for $ x \in (0,1]$ and $v'(x) \geq 0$ for $x \in [a,1]$. As a consequence,
\begin{equation}  \label{vlowerbound}
v(x) = v(1) - \int_x^1 v'(t) \, \diff t \geq \alpha - \int_x^1 v_{\alpha}'(t) \,  \diff t = v_{\alpha}(x) \geq c_\alpha , \quad x \in[0,1]
\end{equation}
and in particular $v$ is positive on $[-1,1]$, so that $v \in N_\alpha$.
Next, since $u'(x)<0$ for $x \in (a,b)$ we have that $u(x) \geq u(b)$ for $x \in [a,b]$ and therefore recalling (\ref{defcalpha})
\begin{displaymath}
\frac{1}{\sqrt{4 \varepsilon}} \leq \frac{1}{\sqrt{4 \hat \varepsilon_\alpha}} = c_{\alpha}  \leq v(x) \sqrt{1+v'(x)^2} \leq u(x) \sqrt{1+u'(x)^2}, \quad x \in [-1,1].
\end{displaymath}
Combining the above inequality with the fact that $z\mapsto 4 \varepsilon z + \frac{1}{z}$ is increasing for $z \geq \frac{1}{\sqrt{4 \varepsilon}}$  we infer that
\begin{eqnarray}
\lefteqn{ \hspace{-2cm}  4 \varepsilon v(x) \sqrt{1+v'(x)^2} + \frac{1}{v(x) \sqrt{1+v'(x)^2}}  } \nonumber  \\[2mm]
&\leq &   4 \varepsilon u(x) \sqrt{1+u'(x)^2} + \frac{1}{u(x) \sqrt{1+u'(x)^2}}, \qquad x \in [-1,1]. \label{part1} 
\end{eqnarray}
Furthermore, in view of the definition of $v$ and the fact that $v(x) \leq u(x)$ for $x \in [-b,b]$ we have
\begin{equation}  \label{part2}
\frac{v(x) v''(x)^2}{{(1+ v'(x)^2)}^{5/2}} \leq \frac{u(x) u''(x)^2}{{(1+ u'(x)^2)}^{5/2}}, \quad \mbox{ for all }x \in [-1,1]\setminus \{\pm a,\pm b \}.
\end{equation}
If we insert (\ref{part1}) and (\ref{part2}) into (\ref{eq:helfrich_functional1}) we deduce that $\mathscr H_\varepsilon(v) \leq \mathscr H_\varepsilon(u)$. 
Note that $v_{|[-a,a]}$ has less critical points than $u$. If $v'(x_1)<0$ for some $x_1 \in (0,a)$ we can repeat the above procedure on $[-a,a]$ until
we obtain a function with the desired properties in finitely many steps.
\end{proof}

%-----------------------------------------------------------------------
\begin{remark}[]
Lemma \ref{step2} is an improvement  of \cite[Lemma\;4.8]{Scholtes} thanks to the better bound (\ref{vlowerbound}) from below. There the condition on $ \alpha  $ and $ \varepsilon  $ is given by
		\begin{align}
			\label{eq:comparision_scholtes_1} 
			\frac{4 \varepsilon \alpha \sigma \, ( \alpha - \sigma)}{4 \varepsilon \alpha ^2 +1} \,\ge\, 1
		\end{align}
		where $ \sigma  \in (0, \alpha ) $ is some appropriate constant. But equation \eqref{eq:comparision_scholtes_1} implies 
		$  
		4\le 4 \frac{4 \varepsilon \alpha  \, ( \alpha^2 /4)}{4 \varepsilon \alpha ^2 }=\alpha.
		$ 
		With the better bounds in Theorem \ref{thm:existence_conv_to_cat} we are
		able to decrease this range down to $ \alpha _m$ (and to admit arbitrarily large $\varepsilon$).
\end{remark}

%-----------------------------------------------------------------------
\begin{theorem}[]
	\label{thm:existence_conv_to_cat}
	Assume that $ \alpha \geq \alpha _ m  $ and that $\varepsilon \geq  \hat \varepsilon _ \alpha$. Then the Helfrich functional $ \mathscr H _ \varepsilon  $ admits a minimiser $ u \in N_ \alpha$, which satisfies
    \begin{eqnarray} 
    \forall x \in (0,1): \quad 0 < u'(x) < v_{\alpha}'(x);  \label{uprop1} \\
    \forall x \in (-1,1): \quad v_\alpha(x) < u(x) < \alpha.  \label{uprop2}
    \end{eqnarray}
    The function $u$ belongs to $C^\infty ([-1,1])$ and solves the Dirichlet problem (\ref{dirhelf}), (\ref{dirbc}). 
	Furthermore, there exists a constant $c \in \mathbb{R}$ such that
\begin{equation}  \label{helf1}
\frac{u(x) u'(x) H'(x)}{1+u'(x)^2} + \frac{u(x) H(x)^2}{\sqrt{1+u'(x)^2}} - \frac{H(x)}{1+u'(x)^2} - \varepsilon \frac{u(x)}{\sqrt{1+u'(x)^2}} = c, \quad x \in [-1,1].
\end{equation}
\end{theorem}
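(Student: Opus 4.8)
The plan is to run the two-step surgery procedure of Lemmas \ref{step1} and \ref{step2} on a minimising sequence and then pass to the limit in a compact class. First I would start with a minimising sequence $(u_k)_{k\in\N}\subset N_\alpha$ for $\mathscr H_\varepsilon$. A technical point to settle is that Lemmas \ref{step1} and \ref{step2} need ``finitely many critical points''; I would argue that one may assume this for each $u_k$ after a small perturbation (e.g. by approximating $u_k$ in $H^2$ by analytic, or at least sufficiently generic, even functions in $N_\alpha$ while changing $\mathscr H_\varepsilon$ by less than $1/k$), so there is no loss of generality. Applying Lemma \ref{step1} and then Lemma \ref{step2} to (the perturbed) $u_k$ produces $v_k\in N_\alpha$ with finitely many critical points, with $\mathscr H_\varepsilon(v_k)\le\mathscr H_\varepsilon(u_k)+1/k$, and — crucially — with the uniform geometric bounds $0\le v_k'(x)<v_\alpha'(x)$ on $(0,1]$ and $v_\alpha(x)<v_k(x)\le\alpha$ on $(-1,1)$. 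In particular $(v_k)$ is still a minimising sequence.

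The geometric bounds give exactly the compactness we need: $\|v_k\|_{C^1([-1,1])}$ is bounded (by $\alpha+\max|v_\alpha'|$), and $v_k$ is bounded below by $c_\alpha>0$, so from the identity (\ref{equiv}) (as used in the proof of Theorem \ref{thm:energyexistence}) the bound on $\mathscr W(v_k)\le\mathscr H_\varepsilon(v_k)$ yields an $L^2$-bound on $v_k''$. Hence, along a subsequence, $v_k\rightharpoonup u$ in $H^2(-1,1)$ and $v_k\to u$ in $C^1([-1,1])$, with $u\in N_\alpha$. Lower semicontinuity of $\mathscr W$ (again via (\ref{equiv}) and weak lower semicontinuity of the $L^2$-norm of the second derivative term) together with continuity of $\mathscr A$ under $C^1$-convergence show that $u$ is a minimiser. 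The inequalities (\ref{uprop2}) and the non-strict version of (\ref{uprop1}) pass to the limit directly, namely $0\le u'(x)\le v_\alpha'(x)$ on $(0,1]$ and $v_\alpha(x)\le u(x)\le\alpha$ on $(-1,1)$.

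It remains to upgrade these to the strict inequalities in (\ref{uprop1}), (\ref{uprop2}) and to establish regularity and the first integral (\ref{helf1}). For regularity: since $u$ is a minimiser in $N_\alpha$ it is a critical point, hence a weak solution of (\ref{dirhelf}); bootstrapping as in the proof of Theorem \ref{thm:energyexistence} (adapting \cite[Theorem\;3.9]{DDG}) gives $u\in C^\infty([-1,1])$, so $u$ solves (\ref{dirhelf}), (\ref{dirbc}) classically. The identity (\ref{helf1}) is then obtained by integrating the divergence-form Helfrich equation (Appendix \ref{sec:divergence_form_helfrich}) once: writing (\ref{dirhelf}) in divergence form, one integration in $x$ produces the left-hand side of (\ref{helf1}) up to a constant, using the boundary conditions $u'(\pm1)=0$ to identify the structure; here I would invoke the explicit divergence form proved in the appendix. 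Finally, the strict inequalities follow from a maximum-principle / uniqueness argument applied to the ODE: if $u'(x_*)=0$ for some $x_*\in(0,1)$ (or $u(x_*)=v_\alpha(x_*)$, or $u(x_*)=\alpha$, at an interior point), one compares $u$ with the appropriate catenary $v_\alpha$ or with a cylinder solution and uses uniqueness for the initial value problem associated to (\ref{dirhelf}) — or, more elementarily, re-examines the surgery: if equality held at an interior point one could perform a further strictly energy-decreasing modification, contradicting minimality. I expect this last point — promoting the weak (non-strict) inequalities to strict ones, and in particular ruling out interior critical points of $u$ — to be the main obstacle, since it requires either a careful strong maximum principle for the fourth-order quasilinear operator in (\ref{dirhelf}) or a delicate refinement of the gluing construction; everything else is a routine combination of the two lemmas with the direct method already used for Theorem \ref{thm:energyexistence}.
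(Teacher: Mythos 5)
Your overall architecture coincides with the paper's: surgery via Lemmas \ref{step1} and \ref{step2} on a minimising sequence with finitely many critical points, compactness and lower semicontinuity as in Theorem \ref{thm:energyexistence} to get a smooth minimiser $u$ with the non-strict bounds $0\le u'\le v_\alpha'$ and $v_\alpha\le u\le\alpha$, regularity, and the first integral \eqref{helf1} from the divergence form of Lemma \ref{noether} (note that no integration or boundary condition is needed there: once $u$ solves \eqref{dirhelf}, Lemma \ref{noether} gives $\frac{\diff}{\diff x}M[u]\equiv 0$ directly). Up to that point your proposal is sound.

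The genuine gap is exactly the step you flag as ``the main obstacle'': promoting $u'\ge 0$ to $u'>0$ on $(0,1)$. Neither of your suggested routes works as stated — there is no maximum principle for the fourth-order quasilinear operator in \eqref{dirhelf}, and the surgeries of Lemmas \ref{step1}, \ref{step2} only yield a \emph{non-strict} energy decrease, so ``a further strictly energy-decreasing modification'' is not available from them. The paper's resolution uses the first integral itself: evaluating \eqref{helf1} at any point $\hat x$ with $u'(\hat x)=0$ gives $\frac{1}{4u(\hat x)}+\varepsilon u(\hat x)=-c+\frac{u(\hat x)u''(\hat x)^2}{4}$. If $u'(x_0)=0$ for some $x_0\in(0,1)$, then (since $u'\ge0$) also $u''(x_0)=0$, and comparing this identity at $\hat x=0$ and $\hat x=x_0$, together with the strict monotonicity of $z\mapsto \frac{1}{4z}+\varepsilon z$ on $[\frac{1}{\sqrt{4\varepsilon}},\infty)$ and the bound $u\ge v_\alpha\ge c_\alpha\ge\frac{1}{\sqrt{4\varepsilon}}$ (this is where $\varepsilon\ge\hat\varepsilon_\alpha$ enters), forces $u(0)=u(x_0)$, hence $u$ constant on $[0,x_0]$; ODE uniqueness for \eqref{dirhelf} then gives $u\equiv\alpha$, whence $\varepsilon=\frac{1}{4\alpha^2}<\frac{1}{4c_\alpha^2}=\hat\varepsilon_\alpha$, a contradiction. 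Finally, the strict upper bound in \eqref{uprop1} is not obtained in the limit but by applying Lemma \ref{step1} once more to the minimiser $u$ itself (now legitimate, since $u'>0$ on $(0,1)$ leaves only finitely many critical points), which produces a minimiser satisfying $0<u'<v_\alpha'$; \eqref{uprop2} then follows by integration. Without some argument of this type your proof establishes only the non-strict versions of \eqref{uprop1}, \eqref{uprop2}.
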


\begin{proof} Let $ (\tilde u_k) _ {k \in \mathbb{N}} \subset N_ \alpha  $ be a minimising sequence, which we may assume to consist of functions with only finitely many critical points (e.g. polynomials). In view of Lemma \ref{step1} and Lemma \ref{step2} we may pass to a minimising sequence $(u_k)_{k \in \mathbb{N}} \subset N_\alpha$ such that 

\begin{displaymath}
\forall x \in (0,1]:  \quad  0 \leq u_k'(x)   \leq  v_{\alpha}'(x) \quad  \mbox{ and } \quad \forall x \in [-1,1]: \quad   v_{\alpha}(x) \leq u_k(x) \leq \alpha,
\end{displaymath}
so that $(u_k)_{k \in \mathbb{N}}$ is bounded in $C^1([-1,1])$. Arguing as in the proof of Theorem \ref{thm:energyexistence} we obtain the existence of a minimiser $u \in N_\alpha$ of $\mathscr H_{\varepsilon}$, which is even 
in $C^{\infty}([-1,1])$ thanks to elliptic regularity for the Euler-Lagrange equation. Furthermore, $u'(x) \geq 0$ for $x \in [0,1]$.  Since $u$ is a critical point of the Helfrich functional, it satisfies (\ref{dirhelf}), (\ref{dirbc})
as shown in Section \ref{helfrichfunctional}. Let us next denote the left hand side of (\ref{helf1}) by $M[u]$. Using Lemma \ref{noether} in Appendix \ref{sec:divergence_form_helfrich} together with (\ref{dirhelf}) 
we obtain that $\frac{\diff }{\diff x} M[u](x) = 0$ which implies (\ref{helf1}). \\
Let us next prove the strict inequality $u'(x)>0, x \in (0,1)$. If $\hat x \in [0,1]$ is a point satisfying $u'(\hat x)=0$, then (\ref{helf1})
yields
\begin{displaymath}
\frac{u(\hat x)}{4} \Bigl( \frac{1}{u(\hat x)} - u''(\hat x) \Bigr)^2 - \frac{1}{2} \Bigl( \frac{1}{u(\hat x)} - u''(\hat x) \Bigr) - \varepsilon u(\hat x) = c,
\end{displaymath}
which simplifies to
\begin{equation}  \label{hatx}
\frac{1}{4 u(\hat x)} + \varepsilon u(\hat x) = -c + \frac{ u(\hat x) u''(\hat x)^2}{4}.
\end{equation}
Let us assume that there exists $x_0 \in (0,1)$ with $u'(x_0)=0$. Since we already know that $u' \geq 0$ in $[0,1]$ we infer that $u''(x_0)=0$. Using (\ref{hatx}) for
$\hat x=0$ and $\hat x = x_0$ we obtain
\begin{displaymath}
\frac{1}{4 u(0)} + \varepsilon u(0)= -c + \frac{ u(0) u''(0)^2}{4} \geq -c = \frac{1}{4 u(x_0)} + \varepsilon u(x_0) \geq \frac{1}{4 u(0)} + \varepsilon u(0),
\end{displaymath}
where the last inequality follows since $z \mapsto \frac{1}{4z}+ \varepsilon z$ is strictly increasing for $z \geq \frac{1}{\sqrt{4 \varepsilon}}$ and  $u(x_0) \geq u(0) \geq
c_{\alpha} \geq \frac{1}{\sqrt{4 \varepsilon}}$. In particular, $u(0)=u(x_0)$ and therefore $u$ is constant on $[0,x_0]$. Standard ODE theory applied to (\ref{dirhelf}) then implies
that $u$ is constant on $[0,1]$ and hence $u \equiv \alpha$ in view of (\ref{dirbc}). This implies that $H \equiv \frac{1}{2 \alpha}$ and then again by (\ref{dirhelf})
that $\varepsilon = \frac{1}{4 \alpha^2}$. Using  the relation $\alpha = c_{\alpha} \cosh(\frac{1}{c_{\alpha}})$ we then obtain
\begin{displaymath}
\varepsilon = \frac{1}{4 \alpha^2} < \frac{1}{4 c_{\alpha}^2} =  \hat \varepsilon_\alpha,
\end{displaymath}
contradicting our assumption that $\varepsilon \geq \hat \varepsilon_\alpha$. Thus, $u'(x)>0$ for $x \in (0,1)$. Since $u$ has only finitely many critical points we may assume in view of
Lemma \ref{step1} and Lemma \ref{step2} that $u$ is a minimiser with the additional property that $0<u'(x)< v_\alpha'(x)$ for $x \in (0,1)$, which is (\ref{uprop1}). The inequalities
in (\ref{uprop2}) then follow immediately.
\end{proof}

%%%%%%%%%%%%%%%%%%%%%%%%%%%%%%%%%%%%%%%%%%%%%%%%%%%%%%%%%%%%%%%%%%%%%

\begin{corollary}\label{cor:existence_easy_range}
 Assume that $ \alpha \geq \alpha _ m  $. Then for any $\varepsilon \geq  0$
  the Helfrich functional $ \mathscr H _ \varepsilon  $ admits a minimiser $ u \in N_ \alpha$, which  belongs to $C^\infty ([-1,1])$ and solves the Dirichlet problem (\ref{dirhelf}), (\ref{dirbc}). 
\end{corollary}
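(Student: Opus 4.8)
The plan is to obtain Corollary~\ref{cor:existence_easy_range} by simply combining two existence results already established in the excerpt: Theorem~\ref{thm:existence_conv_to_cat}, which settles the regime $\varepsilon\ge\hat\varepsilon_\alpha$, and Theorem~\ref{thm:energyexistence} (equivalently Theorem~\ref{thm:existence_energy_method}(i)), applied with the comparison function $v\equiv\alpha$, which settles all sufficiently small $\varepsilon$. The only thing that really has to be verified is that these two ranges together exhaust $[0,\infty)$, i.e. that
\[
\hat\varepsilon_\alpha=\frac{1}{4c_\alpha^2}<\frac1\alpha\qquad\text{for every }\alpha\ge\alpha_m .
\]

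First I would prove this inequality. By (\ref{calphamon}) the map $\alpha\mapsto c_\alpha$ is strictly increasing on $(\alpha_0,\infty)$, so $c_\alpha\ge c_{\alpha_m}=c_m>1$ whenever $\alpha\ge\alpha_m$. Consequently $\cosh\!\big(\tfrac1{c_\alpha}\big)\le\cosh 1<4<4c_\alpha$, and multiplying by $c_\alpha$ gives $\alpha=c_\alpha\cosh\!\big(\tfrac1{c_\alpha}\big)<4c_\alpha^2$, that is, $\hat\varepsilon_\alpha<\tfrac1\alpha$. In particular $\hat\varepsilon_\alpha<\tfrac1\alpha<4$, since $\alpha\ge\alpha_m>\tfrac14$.

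Now fix $\alpha\ge\alpha_m$ and $\varepsilon\ge0$. If $\varepsilon\ge\hat\varepsilon_\alpha$, then Theorem~\ref{thm:existence_conv_to_cat} directly furnishes a minimiser $u\in N_\alpha\cap C^\infty([-1,1])$ of $\mathscr H_\varepsilon$ solving (\ref{dirhelf}), (\ref{dirbc}). If $0\le\varepsilon<\hat\varepsilon_\alpha$, then by the previous step $\varepsilon<\tfrac1\alpha\le4$, and testing with the cylinder $v\equiv\alpha\in N_\alpha$ one has, by (\ref{energyzylinder}), $\mathscr H_\varepsilon(v)=\tfrac\pi\alpha+4\pi\alpha\varepsilon$, which is $<\pi(4+\varepsilon)$ precisely because $\varepsilon<\tfrac1\alpha$. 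Hence Theorem~\ref{thm:energyexistence} applies and again yields a minimiser $u\in N_\alpha\cap C^\infty([-1,1])$; being a critical point of $\mathscr H_\varepsilon$ it solves (\ref{dirhelf}), (\ref{dirbc}) by the first-variation computation of Section~\ref{helfrichfunctional}. Since the two cases cover all $\varepsilon\ge0$, the corollary follows.

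There is no genuine obstacle here: the entire content is the elementary comparison $\hat\varepsilon_\alpha<\tfrac1\alpha$, which itself rests only on the bound $c_\alpha>1$ valid for $\alpha\ge\alpha_m$. The one point deserving a moment's care is that the two existence regimes must actually abut, so that no value of $\varepsilon$ slips through; the strict inequality $\hat\varepsilon_\alpha<\tfrac1\alpha$ is exactly what guarantees this, the borderline value $\varepsilon=\hat\varepsilon_\alpha$ itself being (redundantly) covered by Theorem~\ref{thm:existence_conv_to_cat}.
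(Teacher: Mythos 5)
Your proposal is correct and follows essentially the same route as the paper: the paper's proof likewise reduces everything to the elementary inequality $4c_\alpha>\cosh\bigl(\tfrac{1}{c_\alpha}\bigr)$ (equivalently $\hat\varepsilon_\alpha<\tfrac1\alpha$) for $\alpha\ge\alpha_m$ and then combines Theorem~\ref{thm:energyexistence} (via the cylinder comparison) with Theorem~\ref{thm:existence_conv_to_cat}. Your verification of the overlap of the two regimes, including the check $\varepsilon<\tfrac1\alpha<4$, is exactly the content of the paper's argument.
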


\begin{proof}
 Since $4c>\cosh(\frac{1}{c})$ for $c\ge c_m$, we have $\frac{1}{\alpha} > \hat \varepsilon_\alpha$ for all $\alpha\ge \alpha_m$.
 Combining Theorems~\ref{thm:energyexistence} and 
 \ref{thm:existence_conv_to_cat} yields the claim.
\end{proof}

Let us next investigate the behaviour of minimisers as $\varepsilon \rightarrow \infty$.
\begin{corollary} \label{cor:existence_conv_to_cat}
Let $\alpha \geq \alpha_m$ and $(u_{\varepsilon})_{\varepsilon \geq \hat \varepsilon_\alpha}$ a sequence of minimisers of $\mathscr H_{\varepsilon}$ as obtained in Theorem \ref{thm:existence_conv_to_cat}. Then,
$u_{\varepsilon} \rightarrow v_{\alpha}$ in $W^{1,p}(-1,1)$ as $\varepsilon \rightarrow \infty$ for all $1 \leq p < \infty$.
\end{corollary}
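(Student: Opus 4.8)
My plan is to combine three ingredients: the a priori bounds on $u_\varepsilon$ from Theorem~\ref{thm:existence_conv_to_cat}, the energy minimality of $u_\varepsilon$ tested against competitors that are close in area to the catenary $v_\alpha$, and the characterisation of $v_\alpha$ as the \emph{unique} absolute minimiser of the area in $\mathscr{C}_\alpha$ (Theorem~\ref{thm:absoluteminarea}). From Theorem~\ref{thm:existence_conv_to_cat} each $u_\varepsilon$ is even and satisfies $v_\alpha(x)<u_\varepsilon(x)<\alpha$ and $0<u_\varepsilon'(x)<v_\alpha'(x)=\sinh(x/c_\alpha)$ for $x\in(0,1)$; hence $(u_\varepsilon)_{\varepsilon\ge\hat\varepsilon_\alpha}$ is bounded in $C^1([-1,1])$, equi-Lipschitz, and $\|u_\varepsilon'\|_{L^\infty(-1,1)}\le\sinh(1/c_\alpha)$.

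The next step is to show $\inf_{w\in N_\alpha}\mathscr{A}(w)=\mathscr{A}(v_\alpha)$. The lower bound is immediate, since for $w\in N_\alpha$ the curve $t\mapsto(t,w(t))$ lies in $\mathscr{C}_\alpha$, so Theorem~\ref{thm:absoluteminarea} gives $\mathscr{A}(w)\ge\mathscr{A}(v_\alpha)$. For the upper bound, for small $\eta>0$ I construct $w_\eta\in N_\alpha$ coinciding with $v_\alpha$ on $[-1+\eta,1-\eta]$ and replaced near $x=\pm1$ by a Hermite-type interpolant joining $\bigl(v_\alpha(1-\eta),v_\alpha'(1-\eta)\bigr)$ at $x=1-\eta$ to $(\alpha,0)$ at $x=1$ (and symmetrically at $x=-1$); since $v_\alpha'$ is increasing one checks that this interpolant has derivative bounded independently of $\eta$ and stays bounded away from $0$, so $w_\eta\in N_\alpha$ and the two correction caps contribute $O(\eta)$ to the area, i.e. $\mathscr{A}(w_\eta)\le\mathscr{A}(v_\alpha)+C\eta$. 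Now minimality of $u_\varepsilon$ and $\mathscr{W}\ge0$ give $\varepsilon\,\mathscr{A}(u_\varepsilon)\le\mathscr{H}_\varepsilon(u_\varepsilon)\le\mathscr{H}_\varepsilon(w_\eta)=\mathscr{W}(w_\eta)+\varepsilon\,\mathscr{A}(w_\eta)$, hence $\mathscr{A}(u_\varepsilon)\le\mathscr{A}(w_\eta)+\mathscr{W}(w_\eta)/\varepsilon$. Letting $\varepsilon\to\infty$ and then $\eta\to0$ yields $\limsup_{\varepsilon\to\infty}\mathscr{A}(u_\varepsilon)\le\mathscr{A}(v_\alpha)$, so with the lower bound $\mathscr{A}(u_\varepsilon)\to\mathscr{A}(v_\alpha)$.

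For the convergence itself, take any sequence $\varepsilon_k\to\infty$; by the $C^1$ bound and Arzel\`a--Ascoli a subsequence converges uniformly to some even $u_*\in W^{1,\infty}(-1,1)$ with $v_\alpha\le u_*\le\alpha$ and $u_*(\pm1)=\alpha$. Since $p\mapsto z\sqrt{1+p^2}$ is convex and $u_*>0$, the area functional is lower semicontinuous under uniform convergence of equi-Lipschitz functions, so $\mathscr{A}(u_*)\le\liminf_k\mathscr{A}(u_{\varepsilon_k})=\mathscr{A}(v_\alpha)$; on the other hand $(t,u_*(t))\in\mathscr{C}_\alpha$ forces $\mathscr{A}(u_*)\ge\mathscr{A}(v_\alpha)$. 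Hence $u_*$ is an absolute minimiser of $\mathscr{A}$ in $\mathscr{C}_\alpha$ which, as a graph, is bounded below by $c_\alpha>0$; by the uniqueness part of Theorem~\ref{thm:absoluteminarea} (for $\alpha=\alpha_m$ the Goldschmidt solution is excluded, being not bounded away from $0$) we conclude $u_*=v_\alpha$. As the limit is independent of the subsequence, $u_\varepsilon\to v_\alpha$ uniformly on $[-1,1]$. Finally, for $x\in(0,1)$ we have $v_\alpha'(x)-u_\varepsilon'(x)\ge0$ and $\int_0^1\bigl(v_\alpha'-u_\varepsilon'\bigr)\,\diff x=u_\varepsilon(0)-c_\alpha\to v_\alpha(0)-c_\alpha=0$, so $\|u_\varepsilon'-v_\alpha'\|_{L^1(0,1)}\to0$ and, by evenness, $\|u_\varepsilon'-v_\alpha'\|_{L^1(-1,1)}\to0$. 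Since $\|u_\varepsilon'\|_{L^\infty}+\|v_\alpha'\|_{L^\infty}\le2\sinh(1/c_\alpha)$, interpolation gives $\|u_\varepsilon'-v_\alpha'\|_{L^p}^p\le\bigl(2\sinh(1/c_\alpha)\bigr)^{p-1}\|u_\varepsilon'-v_\alpha'\|_{L^1}\to0$ for every $1\le p<\infty$; together with the uniform convergence this gives $u_\varepsilon\to v_\alpha$ in $W^{1,p}(-1,1)$.

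I expect the main obstacle to be the second step, namely producing an $N_\alpha$-competitor arbitrarily close in area to $v_\alpha\notin N_\alpha$: one must modify the catenary near the clamped boundary so that $u'(\pm1)=0$ while keeping positivity, uniform derivative bounds, and a correction of area $O(\eta)$. The remaining ingredients are soft --- lower semicontinuity of $\mathscr{A}$, Arzel\`a--Ascoli, the uniqueness statement in Theorem~\ref{thm:absoluteminarea}, and the elementary $L^1$-to-$L^p$ interpolation that is made available by the one-sided bound $u_\varepsilon'\le v_\alpha'$.
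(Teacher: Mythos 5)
Your proposal is correct, and its skeleton (compactness of $(u_\varepsilon)$ from (\ref{uprop1})--(\ref{uprop2}), transferring minimality of $\mathscr H_\varepsilon$ into a statement about the area, identifying the uniform limit, and upgrading $L^1$-convergence of the derivatives to $L^p$ via the one-sided bound $0\le u_\varepsilon'\le v_\alpha'$ on $[0,1]$ — this last step is essentially verbatim the paper's) coincides with the paper's proof; but the middle part is genuinely different. The paper tests $u_{\varepsilon_k}$ against $\alpha+\zeta_\delta$, where $\zeta_\delta\in C_0^\infty(-1,1)$ approximates $v-\alpha$ in $H^1$ for an \emph{arbitrary} element $v$ of the relaxed graph class $C_\alpha$, thereby proving that the limit minimises $\mathscr A$ over all of $C_\alpha$; it then derives the weak form of the minimal surface equation (with the even/odd test-function splitting), obtains $C^2$-regularity, concludes that the limit is $w_{c_1(\alpha)}$ or $w_{c_2(\alpha)}$, and discards the large catenoid by the inequality (\ref{eq:small_large_catenoid}) from Appendix \ref{sec:small_large_catenoid}. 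You instead build a boundary-layer competitor $w_\eta\in N_\alpha$ modelled on $v_\alpha$ itself to get $\mathscr A(u_\varepsilon)\to\mathscr A(v_\alpha)$, obtain the matching lower bound from Theorem \ref{thm:absoluteminarea}, and identify the limit by the \emph{uniqueness} clause of that theorem, excluding the Goldschmidt solution (relevant only for $\alpha=\alpha_m$) via $u_*\ge v_\alpha\ge c_\alpha>0$. Your route is softer in that it avoids the Euler--Lagrange/regularity step and the small-versus-large catenoid comparison, but it leans more heavily on the full classification of Giaquinta--Hildebrandt and on the explicit boundary-cap construction, whose details (the cubic Hermite bound $\|p'\|_{\infty}\le C\bigl(\|v_\alpha'\|_\infty+\tfrac{\alpha-v_\alpha(1-\eta)}{\eta}\bigr)$ uniform in $\eta$, positivity, $H^2$-matching at $x=\pm(1-\eta)$, and finiteness of $\mathscr W(w_\eta)$ for fixed $\eta$) should be written out, though they all check out; the paper's choice of the cylinder-based competitor $\alpha+\zeta_\delta$ sidesteps this construction entirely at the cost of needing the weak Euler--Lagrange argument afterwards.
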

\begin{proof} Let $(\varepsilon_k)_{k \in \mathbb{N}}$ be an  arbitrary sequence with $\varepsilon_k \geq \hat \varepsilon_\alpha$ and  $ \varepsilon _ k \rightarrow \infty, k \rightarrow \infty$. Abbreviating $u_k=u_{\varepsilon_k}$ 
we infer from (\ref{uprop1}), (\ref{uprop2}) that $(u_k)_ {k \in \mathbb{N}}$ is bounded in $C^1([-1,1])$. Thus, there exists a subsequence, again denoted by $(u_k)_{k \in \mathbb{N}}$, and
$u \in W^{1,\infty}(-1,1)$ such that
\begin{displaymath}
u_k \rightharpoonup u \mbox{ in } H^1(-1,1) \mbox{ and } u_k \rightarrow u \mbox{ in } C^0([-1,1]).
\end{displaymath}
In order to identify $u$ we claim that $u$ is a minimiser of $\mathscr A$ in the class 
\begin{displaymath}
C_\alpha= \lbrace v \in C^{0,1}([-1,1]) \, | \, v \mbox{ is even }, v>0 \mbox{ in } [-1,1],  v(\pm1)=\alpha \rbrace. 
\end{displaymath}
To see this, let $v \in C_\alpha$ and fix $0<\delta \leq \frac{1}{2} \min_{[-1,1]}v$. Since $v-\alpha \in H^1_0(-1,1)$ there exists $\zeta_{\delta} \in C^{\infty}_0(-1,1)$ (i.e. smooth and compactly supported in $(-1,1)$), which is even and satisfies $\Vert v- \alpha - \zeta_\delta \Vert_{H^1} \leq \delta$. As a result
\begin{displaymath}
\alpha + \zeta_\delta(x) \geq v(x) - \max_{[-1,1]}| v - \alpha - \zeta_\delta | \geq \min_{[-1,1]} v - \Vert v - \alpha - \zeta_\delta \Vert_{H^1} \geq \frac{1}{2} \min_{[-1,1]} v >0, \quad x \in [-1,1],
\end{displaymath}
so that
$\alpha+ \zeta_\delta \in N_\alpha$. Thus, $\mathscr H_{\varepsilon_k}(u_k) \leq \mathscr H_{\varepsilon_k}(\alpha+ \zeta_{\delta})$, which implies that
\begin{displaymath}
\mathscr A(u_k) \leq \mathscr A(\alpha+\zeta_{\delta}) + \frac{1}{\varepsilon_k} \mathscr W(\alpha+\zeta_{\delta}), \; k \in \mathbb{N}.
\end{displaymath}
Sending $k \rightarrow \infty$ we infer with the help of the  sequential weak lower semicontinuity of $\mathscr A$ in $H^1(-1,1)$ that
\begin{displaymath}
\mathscr A(u) \leq \mathscr A(\alpha+\zeta_\delta) \leq \mathscr A(v) + C \delta.
\end{displaymath}
Since $\delta$ can be chosen arbitrarily small we deduce that $\mathscr A(u)=\min_{v \in C_\alpha} \mathscr A(v)$ and hence
\begin{displaymath}
\int_{-1}^1 \varphi(x) \, \sqrt{1+u'(x)^2} \, \diff x + \int_{-1}^1 \frac{u(x) u'(x) \varphi'(x)}{\sqrt{1+ u'(x)^2}} \, \diff x = 0 \qquad \mbox{ for all } \varphi \in C^{\infty}_0(-1,1).
\end{displaymath}
Note that the above relation is first obtained for all even $\varphi \in C^{\infty}_0(-1,1)$ and then extended to arbitrary $\varphi \in C^{\infty}_0(-1,1)$ using a splitting into an
even and an odd part. Hence $u$ is a weak solution of (\ref{minimalsurf}) and it is not difficult to see that $u \in C^2([-1,1])$, so that  $u \equiv w_{c_1(\alpha)}$ or $u \equiv w_{c_2(\alpha)}$. Recalling
that $\mathscr A(w_{c_1(\alpha)}) < \mathscr A(w_{c_2(\alpha)})$ we deduce that $u = w_{c_1(\alpha)}=v_\alpha$ and hence
$(u_k)_{k \in \mathbb{N}}$ converges uniformly to $v_{\alpha}$. Furthermore, recalling that $0 \leq u_k'(x) \leq v_\alpha'(x), \, x \in [0,1]$ we have that
\begin{displaymath}
\Vert u_k' - v_\alpha' \Vert_{L^1(-1,1)} = 2 \, \int_0^1 \bigl( v_\alpha'(x) - u_k'(x) \bigr) \, \diff x = 2 \bigl( u_k(0) - v_\alpha(0) \bigr) \rightarrow 0, \, k \rightarrow \infty.
\end{displaymath}
This implies that $u_k' \rightarrow v_\alpha'$ in $L^p(-1,1)$ for all $1 \leq p < \infty$ since $(u_k')_{k \in \mathbb{N}}$ is uniformly bounded. A standard argument then shows that the whole
sequence converges to $v_\alpha$ in $W^{1,p}(-1,1)$ for every $1 \leq p < \infty$.
\end{proof}

%=======================================================================
\section{Summary \& outlook}
\label{sec:summary}

In this article we studied the Dirichlet problem (\ref{dirhelf}, \ref{dirbc}) for Helfrich surfaces of revolution depending on the parameters $ \alpha >0 $ for the boundary value and the weight factor $ \varepsilon \ge 0 $. Figure \ref{fig:domains_of_existence} in the introduction gives an overview over the existence results for solutions and in particular minimisers of the Helfrich functional.

According to Corollary~\ref{cor:existence_easy_range} we 
have the existence  of minimisers of $ \mathscr{H}_ \varepsilon  $
in $ N_ \alpha\cap C^\infty ([-1,1])  $ for all $ \alpha \ge \alpha _m $ and for all $ \varepsilon \ge 0$.
On the other hand,  with Theorem \ref{thm:existence_energy_method}\ref{item:existence_energy_method_ii} we can ensure that for every $ \alpha >0 $ and  $ \varepsilon \ge 0 $ less than a sufficiently small $ \varepsilon _0 (\alpha)$ there exists a Helfrich minimiser. Our hope was to be able to prove existence of minimisers also for $ \alpha < \alpha _ m $ and for all $ \varepsilon \ge 0 $. This turned out to be a hard task (too hard for us)
because minimisers tend to develop oscillations being created around $ x=0 $ and therefore gluing techniques do not seem to work. A first attempt to study this domain of parameters was done in Section \ref{sec:perturbation_cylinder}. In particular Theorem~\ref{thm:change_of_rate_oscillation}, the analysis in Appendix \ref{sec:appendix_est_oscill} and Figure \ref{fig:rate_of_change} show that for $ \alpha < \alpha_{\operatorname{crit}}\approx 0.18008 $ oscillatory behaviour of the rate of change function occurs, leading us to anticipate a similar behaviour for minimisers in the vicinity of the Helfrich cylinders. We conjecture that for any $ \alpha < \alpha _m $ there will be some $ \tilde \varepsilon _ \alpha   $ such that for $ \varepsilon >  \tilde \varepsilon _ \alpha   $ 
(except possibly $\varepsilon _ \alpha $) all minimisers show oscillatory behaviour around the cylinder $x\mapsto \frac{1}{2\sqrt{\varepsilon}}$. Theorem~\ref{thm:change_of_rate_oscillation} and Lemma~\ref{lem:calc_2} 
indicate that for $\alpha\in (0,\alpha_{\operatorname{crit}})$ one may expect 
 that even $  \tilde \varepsilon _ \alpha  < \varepsilon _ \alpha  $. This 
 expectation is supported by numerical experiments.

\label{0} 
The benefit of using gluing techniques like those we used in Section \ref{subsec:exist_gluing} is that we obtain qualitative results for the minimisers. In Theorem \ref{thm:existence_conv_to_cat} we found that for $ \alpha > \alpha _m $ and $ \varepsilon > \hat \varepsilon _ {\alpha } $ the minimisers lie below the Helfrich cylinder, i.\,e. its boundary value at $ \alpha  $. We expect this to be true for all $ \alpha >0 $ and $ \varepsilon > \varepsilon _ \alpha  $. On the other hand we do not know anything for the regime $ 0<\varepsilon < \varepsilon _ \alpha   $ other than the linearisation results from Section \ref{sec:perturbation_cylinder} which indicate that minimisers start to grow as $ \varepsilon  $ decreases. Thus we expect for these minimisers to lie above the Helfrich cylinder and possibly below the  Willmore minimisers. \\ \-

To conclude, let us collect some open problems:
\begin{enumerate}[label={(\arabic*)}]
    \item  $ \alpha \ge \alpha _m $: In Corollary~\ref{cor:existence_conv_to_cat},
    do we have smooth convergence away from the boundary as
    $ \varepsilon \nearrow \infty  $, i.e. in any 
    $C^m_{\operatorname{loc}} (-1,1)$?
	\item $ \alpha < \alpha _m $: Existence of minimisers for all $ \varepsilon >0 $?
	\item $ \alpha < \alpha _m $: Analogously to Corollary~\ref{cor:existence_conv_to_cat}, does the sequence of minimisers $ (u_ \varepsilon )_ {\varepsilon >0} $ converge to the Goldschmidt solution as $ \varepsilon \nearrow \infty  $? In which sense?
	\item $ \alpha < \alpha _m $: What happens in the boundary layer as $ \varepsilon \nearrow \infty  $?
	\item $ 0 < \varepsilon < \varepsilon _ \alpha   $: Do minimisers lie below the Willmore minimisers and above the Helfrich cylinder, i.\,e. $x\mapsto  \alpha  $?
	\item $ \varepsilon > \varepsilon _ \alpha   $: Do minimisers lie below the Helfrich cylinder, i.\,e. $ x\mapsto \alpha  $?
	\item Are there parameter intervals for $\alpha$ such that the 
	values $ u_ \varepsilon (x) $ of (suitable) Helfrich minimisers for $ x \in [-1,1] $ are a decreasing function in $ \varepsilon  $?
	\item Can one quantify the domain where oscillatory behaviour occurs?
	\item Can one expect uniqueness of minimisers, at least in some parameter regime? To our knowledge this question is completely open.
\end{enumerate}

%=======================================================================
%=======================================================================
\begin{appendix}

%==================================================================

%%%%%%%%%%%%%%%%%%%%%%%%%%%%%%%%%%%%%%%%%%%%%%%%%%%%%%%%%%%
%%%%%%%%%%%%%%%                           %%%%%%%%%%%%%%%%%
%%%%%%%%%%%%%%%   Neuer Text              %%%%%%%%%%%%%%%%%
%%%%%%%%%%%%%%%                           %%%%%%%%%%%%%%%%%
%%%%%%%%%%%%%%%%%%%%%%%%%%%%%%%%%%%%%%%%%%%%%%%%%%%%%%%%%%%

\section{The ``small'' and the ``large'' catenoid}
\label{sec:small_large_catenoid} 

For the reader's convenience we collect some arguments, which do not only show inequality (\ref{eq:small_large_catenoid}) but also the remarks before Theorem~\ref{thm:absoluteminarea}. We do not claim originality, but we could not locate a reference for what follows.

We consider $c\in(0,\infty)$ as independent variable and use as 
before $w_c$ for the corresponding catenoids and the radii 
of their boundary circles 
$$
(0,\infty ) \ni c \mapsto \alpha (c):=c\cosh\left(\frac{1}{c}\right)
$$
as a function of $c$. We calculate the first three derivatives:
\begin{eqnarray*}
	\alpha'(c) &=& \cosh\left(\frac{1}{c}\right)-\frac{1}{c} \sinh\left(\frac{1}{c}\right),\\
	\alpha''(c) &=& \frac{1}{c^3} \cosh\left(\frac{1}{c}\right)>0,\\
	\alpha'''(c) &=& -\frac{3}{c^4} \cosh\left(\frac{1}{c}\right)
	                 -\frac{1}{c^5} \sinh\left(\frac{1}{c}\right)<0.
\end{eqnarray*}	
As remarked above, $\alpha'(c)=0 \Leftrightarrow c=\tanh\left(\frac{1}{c}\right)  \Leftrightarrow c=c_0$.
The strict monotonicity of $\alpha''$ yields
$$
\forall c\in(0,c_0):\quad \alpha''(c_0-c) > \alpha''(c_0+c).
$$
Using $\alpha'(c_0)=0$, integration yields
$
\forall c\in(0,c_0):\quad -\alpha'(c_0-c) > \alpha'(c_0+c).
$
After a further integration we come up with
\begin{equation}\label{eq:convexity_radii_catenoids}
\forall c\in(0,c_0):\quad \alpha(c_0-c) > \alpha(c_0+c).
\end{equation}
The idea in proving (\ref{eq:small_large_catenoid}) consists 
in comparing the area of the catenoids $w_c$ and of the 
Goldschmidt solutions $\gamma_{\alpha(c)}$:
\begin{eqnarray*}
	\frac{1}{2\pi}\left({\mathscr A}(w_c)-{\mathscr A}(\gamma_{\alpha(c)}) \right) &=& c+\frac{c^2}{2} \sinh\left(\frac{2}{c}\right)
	-c^2  \cosh^2\left(\frac{1}{c}\right)\\
	&=& c-\frac{c^2}{2}-\frac{c^2}{2} e^{-2/c}.
\end{eqnarray*}
We hence define the function
$$
(0,\infty ) \ni c \mapsto g (c):=c-\frac{c^2}{2}-\frac{c^2}{2} e^{-2/c}.
$$
One may note that $g$ may be smoothly extended to $[0,\infty ) $.
As above we calculate the first three derivatives:
\begin{eqnarray*}
g'(c) &=& 1-c-c  e^{-2/c}- e^{-2/c},\\
g''(c) &=& -1- e^{-2/c}-\frac{2}{c}  e^{-2/c}-\frac{2}{c^2}  e^{-2/c}<0,\\
g'''(c) &=& -\frac{4}{c^4} e^{-2/c}<0.
\end{eqnarray*}	
We observe that $g(\, .\, )$ has the same unique critical point as $\alpha(\, .\, )$, i.e.
$g'(c)=0\Leftrightarrow c=c_0$,
$$
g'(c)>0\Leftrightarrow c<c_0 \quad \mbox{ and } g'(c)<0\Leftrightarrow c>c_0. 
$$
As for $\alpha$ we conclude from the strict monotonicity of the 
second derivatives and from $g'(c_0)=0$ that 
\begin{equation}\label{eq:convexity_area_catenoids}
\forall c\in(0,c_0):\quad g(c_0-c) > g(c_0+c).
\end{equation}

We may now prove (\ref{eq:small_large_catenoid}) and take any $\alpha >\alpha_0$. As above we choose
$$
0<c_2 (\alpha) < c_0 < c_1(\alpha) \quad \mbox{ with } \quad 
\alpha =\alpha( c_1(\alpha)) =\alpha( c_2(\alpha)) .
$$	
Inequality~(\ref{eq:convexity_radii_catenoids}) yields
$$
\alpha( c_1(\alpha)) =\alpha( c_2(\alpha))=\alpha(c_0-(c_0-c_2(\alpha)))
>\alpha(c_0+(c_0-c_2(\alpha)))=\alpha (2c_0-c_2(\alpha)).
$$
Since $\alpha(\, .\, )$ is strictly \emph{increasing} on $(c_0,\infty)$,
this yields 
$$
c_1(\alpha) > 2 c_0 -c_2(\alpha).
$$
On the other hand, $g(\, .\, )$ is strictly \emph{decreasing} on $(c_0,\infty)$, and we find by means of (\ref{eq:convexity_area_catenoids})
$$
g( c_1(\alpha)) <g( 2 c_0 -c_2(\alpha))=g(c_0+(c_0-c_2(\alpha)))
<g(c_0-(c_0-c_2(\alpha)))=g(c_2(\alpha)).
$$
Since the Goldschmidt contribution is the same for $c_1(\alpha)$ 
and $c_2(\alpha)$, we come up with 
$$
{\mathscr A}(w_{c_1(\alpha)})<{\mathscr A}(w_{c_2(\alpha)}),
$$
as claimed. Finally one should note that $g(c)<0\Leftrightarrow c>c_m$, which shows that in this regime, the ``small'' catenoid $w_{c_1(\alpha)}$ has smaller area than the Goldschmidt 
solution as well as the ``large'' catenoid $w_{c_2(\alpha)}$.

%%%%%%%%%%%%%%%%%%%%%%%%%%%%%%%%%%%%%%%%%%%%%%%%%%%%%%%%%%%
%%%%%%%%%%%%%%%                           %%%%%%%%%%%%%%%%%
%%%%%%%%%%%%%%%   Ende Neuer Text         %%%%%%%%%%%%%%%%%
%%%%%%%%%%%%%%%                           %%%%%%%%%%%%%%%%%
%%%%%%%%%%%%%%%%%%%%%%%%%%%%%%%%%%%%%%%%%%%%%%%%%%%%%%%%%%%

%=======================================================================
\section{Divergence form of the Helfrich equation}
\label{sec:divergence_form_helfrich} 

We note that the form of $M[u]$ in the case $\varepsilon=0$ is derived in \cite[Theorem 3]{DADW} and is a consequence of 
the invariance of the Willmore functional with respect to 
translations. This means that the Helfrich equation can be written in divergence form. 
For the Willmore equation this was observed and exploited 
by Rusu \cite{Rusu}, Rivi\`{e}re \cite{Riviere} and many others.

\begin{lemma} \label{noether}  For $u \in C^4([-1,1])$ let 
\begin{displaymath}
M[u](x):= \frac{u(x) u'(x) H'(x)}{1+u'(x)^2} + \frac{u(x) H(x)^2}{\sqrt{1+u'(x)^2}} - \frac{H(x)}{1+u'(x)^2} - \varepsilon \frac{u(x)}{\sqrt{1+u'(x)^2}}.
\end{displaymath}
Then we have
\begin{eqnarray*}
\frac{\diff }{\diff x} M[u](x) & = & u(x) u'(x) \left\{ \frac{1}{u(x) \sqrt{1+u'(x)^2}} \, \frac{\diff }{\diff x} \bigg( \frac{u(x)}{\sqrt{1+u'(x)^2}} H'(x)\bigg) \right.  \\[2mm]
& & + \left.  \frac12 H(x) \bigg(\frac{u''(x)}{{(1+u'(x)^2)}^{3/2}} + \frac{1}{u(x)\sqrt{1+u'(x)^2}} \bigg)^2 - 2 \varepsilon H(x) \right\}.
\end{eqnarray*}
\end{lemma}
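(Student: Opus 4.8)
The plan is to verify the identity by direct differentiation, organised according to the splitting of the Helfrich operator into its Willmore part and the area term $-2\varepsilon H$. Write $M[u] = M_{\mathscr W}[u] - \varepsilon\, u/\sqrt{1+u'^2}$, where $M_{\mathscr W}[u] := \frac{u u' H'}{1+u'^2} + \frac{u H^2}{\sqrt{1+u'^2}} - \frac{H}{1+u'^2}$, and observe that the bracketed expression on the right-hand side of the lemma is precisely the left-hand side of \eqref{dirhelf}, i.e. $\Delta_S H + 2H(H^2-K) - 2\varepsilon H$ rewritten in terms of $u$. It therefore suffices to establish the two identities $\frac{\diff}{\diff x}M_{\mathscr W}[u] = u u'\bigl(\Delta_S H + 2H(H^2-K)\bigr)$ and $\frac{\diff}{\diff x}\bigl(-\varepsilon\, u/\sqrt{1+u'^2}\bigr) = -2\varepsilon\, u u' H$, and add them.

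The second identity is immediate: differentiating $-\varepsilon\, u/\sqrt{1+u'^2}$ and inserting $H = \tfrac12\bigl(\tfrac{1}{u\sqrt{1+u'^2}} - \tfrac{u''}{(1+u'^2)^{3/2}}\bigr)$ gives, in two lines, $\frac{\diff}{\diff x}\bigl(-\varepsilon\, u/\sqrt{1+u'^2}\bigr) = -\varepsilon\, u'\,\frac{1+u'^2 - u u''}{(1+u'^2)^{3/2}} = -2\varepsilon\, u u' H$. The first identity is exactly the case $\varepsilon = 0$ of the lemma; it is the content of \cite[Theorem 3]{DADW} and may simply be quoted, or else proved by the same scheme --- express $H'$ in terms of $u, u', u'', u'''$, substitute into the three summands of $M_{\mathscr W}[u]$, differentiate, use $\tfrac{u''}{(1+u'^2)^{3/2}} = \tfrac{\diff}{\diff x}\tfrac{u'}{\sqrt{1+u'^2}}$ to reorganise the cross terms, and regroup by powers of $1+u'^2$ until the result coincides with $u u'$ times the left-hand side of \eqref{dirhelf} with $\varepsilon = 0$. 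Adding the two identities yields the lemma.

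The structural reason behind the identity --- and a second, more conceptual proof --- is Noether's theorem applied to the translation invariance of $\mathscr H_\varepsilon$ in the $x$-variable. By \eqref{eq:helfrich_functional1}, $\mathscr H_\varepsilon(u) = \tfrac{\pi}{2}\int_{-1}^1 \mathcal L(u, u', u'')\,\diff x$ with a Lagrangian $\mathcal L$ that does not depend explicitly on $x$; hence the associated Euler--Lagrange ODE possesses the first integral $\mathcal E[u] := \mathcal L - u'\bigl(\mathcal L_{u'} - \tfrac{\diff}{\diff x}\mathcal L_{u''}\bigr) - u'' \mathcal L_{u''}$, which for every $u \in C^4([-1,1])$ satisfies $\frac{\diff}{\diff x}\mathcal E[u] = u'\, N[u]$, where $N[u] := \mathcal L_u - \tfrac{\diff}{\diff x}\mathcal L_{u'} + \tfrac{\diff^2}{\diff x^2}\mathcal L_{u''}$ is the Euler--Lagrange expression of $\int\mathcal L$. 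Since $\langle\mathscr H_\varepsilon'(u),\varphi\rangle = \tfrac{\pi}{2}\int_{-1}^1 N[u]\,\varphi\,\diff x$, comparison with \eqref{fva} and \eqref{fvw} gives $N[u] = -4u\bigl(\Delta_S H + 2H(H^2-K) - 2\varepsilon H\bigr)$, so that $\frac{\diff}{\diff x}\bigl(-\tfrac14\mathcal E[u]\bigr)$ is exactly the right-hand side of the lemma; it then remains to identify $-\tfrac14\mathcal E[u]$ with $M[u]$ up to an additive constant by a finite computation of $\mathcal L_{u'}$ and $\mathcal L_{u''}$ from \eqref{eq:helfrich_functional1}.

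I expect the only genuine work, on either route, to be the algebraic bookkeeping in the Willmore ($\varepsilon = 0$) step: it involves fourth derivatives of $u$ together with many powers of $1+u'^2$, and although the cancellations are forced they are not transparent term by term. This is precisely why it is preferable to cite \cite[Theorem 3]{DADW} for that part, so that the genuinely new content of the lemma reduces to the trivial area-term computation above; on the Noether route the analogous obstacle is the (shorter but still finite) simplification identifying $-\tfrac14\mathcal E[u]$ with $M[u]$.
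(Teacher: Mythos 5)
Your proposal is correct in substance but organised differently from the paper. The paper proves the lemma by one short, self-contained computation: it differentiates each of the four summands of $M[u]$ by the product rule, keeping $H$ and $H'$ as abbreviations (so the term $\frac{\diff}{\diff x}\bigl(u H'/\sqrt{1+u'^2}\bigr)$ appears verbatim and no expansion into third and fourth derivatives of $u$ is ever needed), and then combines the four identities using the relation $\frac{u u''}{(1+u'^2)^2} + \frac{2uH}{\sqrt{1+u'^2}} - \frac{1}{1+u'^2} = 0$, which is just the definition of $H$ rearranged; the "algebraic bookkeeping in powers of $1+u'^2$" you anticipate largely disappears with this device. Your first route --- splitting $M[u]$ into the Willmore part and the area part, computing $\frac{\diff}{\diff x}\bigl(u/\sqrt{1+u'^2}\bigr) = 2uu'H$ (correct, and in fact one of the paper's four identities), and quoting \cite[Theorem 3]{DADW} for the $\varepsilon=0$ part --- is legitimate and is exactly what the paper's prefatory remark to this appendix points to; the only caution is that you need the cited result as a pointwise identity valid for arbitrary $C^4$ functions, not merely the statement that $M[u]$ is a first integral along Willmore solutions. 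Your Noether route is also sound in outline: the off-shell identity $\frac{\diff}{\diff x}\mathcal{E}[u] = u'\,N[u]$ for autonomous second-order Lagrangians and the identification $N[u] = -4u\bigl(\Delta_S H + 2H(H^2-K) - 2\varepsilon H\bigr)$ via \eqref{fva}, \eqref{fvw} are correct, but note that \eqref{eq:helfrich_functional1} differs from the Helfrich integrand by the null Lagrangian $-2\frac{\diff}{\diff x}\bigl(u'/\sqrt{1+u'^2}\bigr)$ (it was derived using $u'(\pm 1)=0$) --- happily this contributes nothing to $\mathcal{E}$, but it should be said --- and the final identification of $-\frac{1}{4}\mathcal{E}[u]$ with $M[u]$ is a computation you have not carried out. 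In short: your routes buy either economy (citing known work) or conceptual transparency (translation invariance), while the paper's direct differentiation is elementary, complete and shorter than you expect.
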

\begin{proof} A straightforward calculation shows that
\begin{eqnarray*}
\frac{\diff }{\diff x} \Bigl( \frac{u u' H'}{1+ (u')^2} \Bigr) & = &  \frac{u'}{\sqrt{1+(u')^2}} \, \frac{\diff }{\diff x} \Bigl( \frac{u H'}{\sqrt{1+ (u')^2}} \Bigr) + \frac{u u'' H'}{{(1+(u')^2)}^2}; \\
\frac{\diff }{\diff x} \Bigl( \frac{u H^2}{\sqrt{1+(u')^2}} \Bigr) & = & \frac{u' H^2}{\sqrt{1+(u')^2}}  + 2 \frac{uH H'}{\sqrt{1+(u')^2}} - \frac{u u' u'' H^2}{{(1+(u')^2)}^{3/2}}; \\
\frac{\diff }{\diff x} \Bigl( \frac{H}{1+(u')^2} \Bigr) & = & \frac{H'}{1+ (u')^2} - 2 \frac{u' u'' H}{{(1+ (u')^2)}^2}; \\
\frac{\diff }{\diff x} \Bigl( \frac{u}{\sqrt{1+ (u')^2}} \Bigr) & = & 2 u u' H.
\end{eqnarray*}
Combining the above relations and taking into account that
\begin{displaymath}
\frac{u u''}{{(1+(u')^2)}^2} + 2 \frac{uH}{\sqrt{1+(u')^2}} - \frac{1}{1+ (u')^2} =0
\end{displaymath}
we obtain
\begin{eqnarray*}
\lefteqn{ \frac{\diff }{\diff x} M[u] = u u' \left\{ \frac{1}{u \sqrt{1+(u')^2}} \, \frac{\diff }{\diff x} \Bigl( \frac{u H'}{\sqrt{1+ (u')^2}} \Bigr)  \right. } \\
& &  \left. + H \Bigl( \frac{H}{u \sqrt{1+(u')^2}}  - \frac{u'' H}{{(1+(u')^2)}^{3/2}} + 2 \frac{u''}{u {(1+(u')^2)}^2} \Bigr)  - 2 \varepsilon H  \right\} \\
& = & u u' \left\{ \frac{1}{u \sqrt{1+(u')^2}}\,  \frac{\diff }{\diff x} \Bigl( \frac{u H'}{\sqrt{1+ (u')^2}} \Bigr) + \frac{1}{2} H \Bigl( \frac{u''}{{(1+ (u')^2)}^{3/2}} + \frac{1}{u \sqrt{1+(u')^2}} \Bigr)^2 -  2 \varepsilon H
\right\}.
\end{eqnarray*}
\end{proof}

%=======================================================================
\section{Estimating the oscillations}
\label{sec:appendix_est_oscill} 

In order to prove the auxiliary results used in Section \ref{sec:perturbation_cylinder} we will give a description of the oscillatory behaviour of the functions
\begin{align*}
	x \, &\mapsto \, A \, \cosh (ax) \, \cos (ax) - B \, \sinh (ax) \, \sin (ax) \, , \\ 
	x \, &\mapsto \, A \, \sinh (ax) \, \cos (ax) - B \, \cosh (ax) \, \sin (ax) \, , 
\end{align*}
where $ A,B $ and $ a $ are given constants.

%-----------------------------------------------------------------------
\begin{lemma}[]
	\label{lem:oscillations_monotonicity_aux}
	Let $ K \not= 0 $ be a given constant and consider the function
	\begin{displaymath}
		\eta  \colon [0,1)  \rightarrow [0,\infty) \, , \qquad \eta (y) \, := \,  1 - \frac{(1+K^2) \, y^2}{1 + K^2 \, y^2}  \, .
	\end{displaymath}
	Then  for all $ y \in [0,1) $ we have $ \eta (y) >0 $. Furthermore: 
	\begin{itemize}
	\item[(a)] If $K < -1$, then there exists $y_\ast \in (0,\frac{1}{\sqrt{2}})$ such that $1+K \eta(y_\ast)=0$ and 
	$1 + K \, \eta (y)<0$ on $(0,y_ \ast)  $, $1 + K \, \eta (y)>0$  on $(y _ \ast,1) $. 
	\item[(b)] If $K \geq -1$, then $1+ K \eta(y)>0$ for all $y \in (0,1)$.
	\end{itemize}
\end{lemma}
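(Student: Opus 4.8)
The plan is to start by putting $\eta$ over a common denominator, which collapses the expression to $\eta(y)=\frac{1-y^2}{1+K^2y^2}$. From this closed form the positivity $\eta(y)>0$ on $[0,1)$ is read off immediately, since both numerator and denominator are strictly positive there. A one-line differentiation then gives $\eta'(y)=-\frac{2(1+K^2)y}{(1+K^2y^2)^2}<0$ for $y\in(0,1)$, so $\eta$ is strictly decreasing on $[0,1)$ from $\eta(0)=1$ to $\lim_{y\to1^-}\eta(y)=0$; in particular $\eta(y)\in(0,1)$ for every $y\in(0,1)$. This monotonicity is the one structural fact driving both parts.

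For part (b) I would simply track signs. If $K>0$ then $K\eta(y)>0$, hence $1+K\eta(y)>1>0$. If $-1\le K<0$, then $\eta(y)<1$ gives $K\eta(y)>K\ge-1$ (the inequality being strict because $K\neq 0$ and $\eta(y)<1$ strictly), so $1+K\eta(y)>1+K\ge 0$, as required.

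For part (a), with $K<-1$, I would use that $K<0$ combined with the strict decrease of $\eta$ makes $y\mapsto 1+K\eta(y)$ strictly increasing on $[0,1)$, with value $1+K<0$ at $y=0$ and limit $1>0$ as $y\to 1^-$. The intermediate value theorem then furnishes a unique root $y_\ast\in(0,1)$ of $1+K\eta(y)=0$, and strict monotonicity yields $1+K\eta(y)<0$ on $(0,y_\ast)$ and $1+K\eta(y)>0$ on $(y_\ast,1)$. The remaining claim $y_\ast<\frac{1}{\sqrt2}$ I would settle by evaluating $\eta(\tfrac{1}{\sqrt2})=\frac{1}{2+K^2}$, so that $1+K\,\eta(\tfrac{1}{\sqrt2})=\frac{K^2+K+2}{K^2+2}$; since the quadratic $K^2+K+2$ has discriminant $1-8=-7<0$ it is strictly positive, whence $1+K\eta(\tfrac{1}{\sqrt2})>0=1+K\eta(y_\ast)$, and strict monotonicity forces $y_\ast<\frac{1}{\sqrt2}$.

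There is no real obstacle here: once the closed form for $\eta$ is in hand, everything reduces to elementary calculus and the intermediate value theorem. The only points calling for a moment's care are recognizing $K^2+K+2>0$ via its negative discriminant, and keeping strict versus non-strict inequalities straight in the borderline case $K=-1$ of part (b).
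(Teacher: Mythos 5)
Your argument is correct. It shares with the paper the starting point of collapsing $\eta$ to the closed form $\eta(y)=\frac{1-y^2}{1+K^2y^2}$ and exploiting its strict monotone decrease (you via the derivative, the paper via rewriting $\eta(y)=\frac{1+K^2}{K^2}\cdot\frac{1}{1+K^2y^2}-\frac{1}{K^2}$), but it handles part (a) by a genuinely different route: the paper factors $1+K\eta(y)=\frac{K+1}{1+K^2y^2}\bigl(1+\frac{K-1}{1+\frac1K}\,y^2\bigr)$ and reads off the explicit root $y_\ast=\bigl(\frac{1+\frac1K}{1-K}\bigr)^{1/2}$, from which $y_\ast<\frac{1}{\sqrt2}$ follows directly since $K<-1$, whereas you obtain existence and uniqueness of $y_\ast$ softly, from the strict increase of $y\mapsto 1+K\eta(y)$ together with the sign change between $1+K<0$ at $y=0$ and the limit $1$ as $y\to 1^-$, and then locate it below $\frac{1}{\sqrt2}$ by computing $1+K\,\eta\bigl(\tfrac{1}{\sqrt2}\bigr)=\frac{K^2+K+2}{K^2+2}>0$ via the negative discriminant of $K^2+K+2$. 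The paper's factorization buys an explicit formula for $y_\ast$ (which, as it happens, is not needed later --- only the bound $y_\ast<\frac{1}{\sqrt2}$ enters the proof of the oscillation proposition), while your intermediate-value argument avoids the somewhat opaque algebraic factorization at the cost of losing the closed-form root; both treatments of part (b) are the same elementary sign bookkeeping.
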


\begin{proof}
	Rewriting 
	$$
	\eta (y) =\frac{1-y^2}{1+K^2y^2}=\frac{1+K^2}{K^2} \cdot \frac{1}{1+K^2y^2} - \frac{1}{K^2}
	$$
	shows that $\eta (\, .\,)$ is strictly decreasing  and hence that $\eta(y)>0$ for $y \in [0,1)$. If
	$K<-1$ a  calculation shows that 
	\begin{displaymath}
	1+ K \eta(y)= \frac{K+1}{1+ K^2 y^2} \Bigl( 1+ \frac{K-1}{1+\frac{1}{K}} \, y^2 \Bigr),
	\end{displaymath}
	which implies (a) with $y_ \ast = \bigl( \frac{1 + \frac{1}{K} }{1 - K} \bigr)^{\frac{1}{2}}$. Note that $y_\ast \in (0, \frac{1}{\sqrt{2}})$, since $K<-1$.
	The assertion (b) is straightforward.
	\end{proof}

%-----------------------------------------------------------------------
\begin{proposition}
	\label{prop:oscillations_extrema} 
	Consider constants $ a \not= 0 $ and $ A,B $ which do not vanish simultaneously, i.\,e. $ A \not= 0 $ or $ B \not= 0 $. Define the even function
	\begin{align*}
		h \colon \mathbb{R} \rightarrow \mathbb{R} \, ,	\qquad h(x) \, := \, A \, \cosh (ax) \, \cos (ax) - B \, \sinh (ax) \, \sin (ax) \, .
	\end{align*}
	Then the local extrema of $ h $ are isolated and strictly increasing in their absolute values on the positive real axis, i.\,e.
	\begin{align*}
		\big| h(x_1)\big| \,<\,\big| h(x_2)\big| \qquad \text{for local extrema} \quad  0 \le  x_1 < x_2 \, .
	\end{align*}
\end{proposition}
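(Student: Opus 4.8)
The plan is to uncover a hidden harmonic-conjugate structure of $h$. We may assume $a>0$, since $\cosh(ax)\cos(ax)$ and $\sinh(ax)\sin(ax)$ are even in $a$ and hence so is $h$. Alongside $h$ introduce the companion function
\[
k(x) \,:=\, A\,\sinh(ax)\,\sin(ax) + B\,\cosh(ax)\,\cos(ax).
\]
Differentiating twice — using $\tfrac{d}{dx}\bigl[\cosh(ax)\cos(ax)\bigr] = a\bigl[\sinh(ax)\cos(ax)-\cosh(ax)\sin(ax)\bigr]$ and the analogous formulas — one checks the linear system $h''=-2a^2k$, $k''=2a^2h$, with the initial data $h(0)=A$, $k(0)=B$, $h'(0)=k'(0)=0$. (Equivalently $h+ik=(A+iB)\cosh\bigl((1+i)ax\bigr)$, but only the ODE system is needed.) For the isolatedness claim it suffices to note that $h$ is real-analytic and non-constant: were $h$ constant we would get $h'\equiv h''\equiv 0$, hence $k=-\tfrac{1}{2a^2}h''\equiv 0$, contradicting that $h^2+k^2$ is unbounded; therefore the zeros of the real-analytic function $h'$ — the critical points of $h$, among them all of its local extrema — are isolated.

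Next I would establish two elementary identities. Setting $\Phi:=h^2+k^2$, a direct computation gives the closed form $\Phi(x)=(A^2+B^2)\bigl(\cosh^2(ax)-\sin^2(ax)\bigr)$, so that $\Phi'(x)=(A^2+B^2)\,a\,\bigl(\sinh(2ax)-\sin(2ax)\bigr)>0$ for $x>0$, since $\sinh s>\sin s$ for $s>0$; thus $\Phi$ is strictly increasing on $[0,\infty)$. Setting $\Psi:=h^2-k^2-\tfrac{1}{a^2}h'k'$ and using $h''=-2a^2k$, $k''=2a^2h$, one finds $\Psi'=2hh'-2kk'-\tfrac{1}{a^2}\bigl(h''k'+h'k''\bigr)=2hh'-2kk'-(2hh'-2kk')=0$, so $\Psi$ is constant, equal to $\Psi(0)=A^2-B^2$.

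These two identities finish the argument. Let $x_0\ge 0$ be any critical point of $h$. Then $h'(x_0)=0$, so $\Psi(x_0)=h(x_0)^2-k(x_0)^2=A^2-B^2$; combining with $\Phi(x_0)=h(x_0)^2+k(x_0)^2$ gives
\[
h(x_0)^2 \,=\, \tfrac12\bigl(\Phi(x_0)+\Psi(x_0)\bigr) \,=\, \tfrac12\Bigl((A^2+B^2)\bigl(\cosh^2(ax_0)-\sin^2(ax_0)\bigr)+A^2-B^2\Bigr).
\]
Since $x\mapsto\cosh^2(ax)-\sin^2(ax)$ is continuous and has derivative $a(\sinh(2ax)-\sin(2ax))>0$ for $x>0$, it is strictly increasing on $[0,\infty)$; hence the right-hand side above is strictly increasing in $x_0\ge 0$, and therefore $|h(x_1)|<|h(x_2)|$ for any two critical points $0\le x_1<x_2$ of $h$ — in particular for any two local extrema.

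The only genuinely non-obvious ingredient is spotting the companion function $k$ and the conserved quantity $\Psi$; everything after that is bookkeeping, and in particular the proof does not invoke Lemma~\ref{lem:oscillations_monotonicity_aux}. A more computational alternative would substitute $y=\tanh(ax)$ (or $y=\tan(ax)$) into the critical-point equation $(A-B)\tanh(ax)=(A+B)\tan(ax)$ and extract the required sign behaviour from Lemma~\ref{lem:oscillations_monotonicity_aux}, but that route is longer and must dispose separately of the cases $A=\pm B$ and $\cos(ax_0)=0$.
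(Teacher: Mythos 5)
Your proof is correct, and it takes a genuinely different route from the paper. You exploit the complex structure $h+ik=(A+iB)\cosh\bigl((1+i)ax\bigr)$: the modulus $\Phi=h^2+k^2=(A^2+B^2)\bigl(\cosh^2(ax)-\sin^2(ax)\bigr)$ is strictly increasing, and the quantity $\Psi=h^2-k^2-\tfrac{1}{a^2}h'k'$ is conserved (indeed $\Psi\equiv A^2-B^2$ identically, as one can also check by direct expansion), so at every critical point $x_0$ you get the closed formula $h(x_0)^2=\tfrac12\bigl[(A^2+B^2)(\cosh^2(ax_0)-\sin^2(ax_0))+A^2-B^2\bigr]$, which is strictly increasing in $x_0\ge 0$; I verified the ODE system $h''=-2a^2k$, $k''=2a^2h$, the constancy of $\Psi$, and the closed form of $\Phi$, and the isolatedness argument via real-analyticity of $h'$ is fine. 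The paper instead writes $h(x)=E(x)\cos\bigl(x+\varphi(x)\bigr)$ and $h'(x)=F(x)\cos\bigl(x+\psi(x)\bigr)$ with strictly increasing amplitudes, controls the phases $\varphi,\psi$ through Lemma \ref{lem:oscillations_monotonicity_aux}, and carries out a four-case analysis in the signs of $B$ relative to $A$ to prove an interlacing claim between the zeros of $h$ and $h'$, from which the monotonicity of the extremal values follows via the monotonicity of $E$. Your argument is shorter, avoids all case distinctions and Lemma \ref{lem:oscillations_monotonicity_aux}, and proves a slightly stronger statement ($|h|$ is strictly increasing along \emph{all} critical points, with an explicit value); the paper's phase analysis, on the other hand, yields extra qualitative byproducts that your method does not directly give, namely the localisation and interlacing of the zeros of $h$ and $h'$ (claims (C) and (D)) and the asymptotic phase gap $\psi-\varphi\to\pi/4$ recorded in the remark following the proposition. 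One purely presentational point: you invoke the unboundedness of $h^2+k^2$ to rule out $h$ being constant before stating the closed form of $\Phi$; since that closed form is independent of non-constancy there is no circularity, but reordering would make this cleaner.
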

\begin{proof}
	Without loss of generality we  assume that $ a = 1 $ since $ h $ is symmetric and the statement is invariant under linear transformations. 
	Since the case where $A=0$ or $B=0$ is quite simple we may assume that $A\not=0$ and $B\not=0$.
	Further we can restrict ourselves to the case $ A > 0 $; otherwise  $- h$ is considered. Setting $ A=1 $ would also be admissible 
	but we like to keep the ``$ A $'' to highlight the connection to $ B $ in the following discussion.  
	Because $h''(x)=-2A \sinh (x) \, \sin (x)-2B \cosh (x) \, \cos (x)$ we see that 
	$h$ has also a strict local minimum in $0$ iff $B<0$ and a strict local maximum  in $0$ iff $B\ge0$. In the first case $h$ must have a further extremum 
	before its first zero.\\ \-
	
	First of all we rewrite
	\begin{align*}
		h(x) \, = \, A \, \cosh (x) \, \cos (x) - B \, \sinh (x) \, \sin (x)  \,=\, E(x) \, \cos \big( x + \varphi (x)\big) \, ,
	\end{align*}
	with
	\begin{align*}
		E(x) \, := \, \big[ A^2 \, \cosh ^2(x) + B^2 \, \sinh ^2(x)\big] ^ {\frac{1}{2} } 
	\end{align*}
	and $ \varphi  $ is the smooth angular map defined modulo $ 2 \pi  $ by
	\begin{align*}
		\cos \big( \varphi (x)\big) \,=\, \frac{A}{E(x)} \, \cosh (x) \qquad \text{and} \qquad \sin \big( \varphi (x)\big)
		\,=\, \frac{B}{E(x)} \, \sinh (x) \, .
	\end{align*}
	We choose $ \varphi  $ such that it is uniquely determined by $ \varphi (0)=0 $, i.\,e. $ \varphi (x) \in (- \pi/2 , \pi /2) $. 
	If $B>0$ then $ \varphi (x) \in [0 , \pi /2) $, if $B<0$ then $ \varphi (x) \in (- \pi/2 , 0]$, and if $B=0$, then $ \varphi (x)\equiv 0$.
	For the derivative of $ \varphi  $ we compute
	\begin{align*}
		\cos \big( \varphi (x)\big)  \, \varphi '(x) \,&=\, \frac{\diff}{\diff x} \Big( \sin \big( \varphi (x)\big)  \Big)  \, =\, \frac{B}{E(x)} \, \cosh (x) - \frac{B}{E(x)^2} \, \sinh (x) \, E'(x) \\
		&= \, \frac{B}{E(x)} \, \cosh (x) - \frac{B (A^2+B^2)}{E(x)^3} \, \sinh ^2(x) \, \cosh (x) \, ,
	\end{align*}
	which yields 
	\begin{align}
		\label{eq:oscillations_extrema_phi_derivative} 
		\varphi '(x) \, = \, \frac{B}{A} \, \bigg[ 1 - \frac{(A^2+B^2) \, \sinh ^2(x)}{A^2 \, \cosh ^2(x) + B^2 \, \sinh ^2(x)} \bigg]  \,=\, \frac{B}{A} \, \Bigg[ 1 - \frac{ \big( 1 + \frac{B^2}{A^2} \big)  \, \tanh ^2(x)}{1 + \frac{B^2}{A^2}  \, \tanh ^2(x)} \Bigg] \, .
	\end{align}
	Putting $K=\frac{B}{A}$ and $y=\tanh(x)$ we apply Lemma~\ref{lem:oscillations_monotonicity_aux} to $\eta (y):= \frac{1}{K} \varphi'(x)$.
	Observing that $\frac{\diff }{\diff x}(x+ \varphi(x))=1+K\eta (y)$ we see that $\frac{\diff }{\diff x}(x+ \varphi(x))$ is either always positive 
	or negative first and then positive. Because $x+ \varphi(x)>-\frac{\pi}{2}$, $\operatorname{Id} + \varphi$ has to become strictly 
	increasing before the first zero of $h$. Denoting $0<x_1<\ldots<x_k<\ldots$ the zeroes of $h$ this shows:
	$$
	x_k+ \varphi(x_k)=\left(k-\frac12 \right)\pi.
	$$
	In particular
	\begin{equation}\tag{C} \label{Claim:C}
	x_k\in 
	\begin{cases}
	\left( \left(k-1 \right)\pi,\left(k-\frac12 \right)\pi\right) \quad & \mbox{\ if\ } B> 0,\\
	 \left[ \left(k-\frac12 \right)\pi, k\pi\right) \quad &\mbox{\ if\ } B\le 0.\\
	\end{cases}
	\end{equation}
	Moreover, $|\cos (x+ \varphi(x))|$ attains in each interval $(x_k,x_{k+1})$ the value $1$.

	Next we  also rewrite the derivative of $ h $
	\begin{align*}
		h'(x) \, = \, (A-B) \, \sinh (x) \, \cos (x) - (A+B) \, \cosh (x) \, \sin (x)  \,=\, F(x) \, \cos \big( x + \psi (x)\big) \, ,
	\end{align*}
	with
	\begin{align*}
		F(x) \, := \, \big[ (A+B)^2 \, \cosh ^2(x) + (A-B)^2 \,  \sinh ^2(x)\big] ^ {\frac{1}{2} } 
	\end{align*}
	and the smooth function $ \psi  $ determined by
	\begin{align*}
		\cos \big( \psi (x)\big) \,=\, \frac{A-B}{F(x)} \, \sinh (x) \qquad \text{and} \qquad \sin \big( \psi (x)\big) \,=\, \frac{A+B}{F(x)} \, \cosh (x) \, .
	\end{align*}
	Here we choose $ \psi (0) = \frac{\pi }{2}  $ in case $ -A < B $, $ \psi (0)=0 $ if $ -B=A $ and $ \psi (0) = - \frac{\pi }{2}  $ else, such that $ \psi (x) \in (- \pi , \pi ) $. Similar to $ \varphi  $ we have
	\begin{align}
		\label{eq:oscillations_extrema_psi_derivative}
		\begin{aligned}
		\psi '(x) \,&=\, - \frac{A-B}{A+B} \, \Bigg[ 1 - \frac{\big( (A+B)^2+(A-B)^2\big) \, \sinh ^2(x)}{(A+B)^2 \, \cosh ^2(x) + (A-B)^2 \, \sinh ^2(x)} \Bigg]  \\
		&=\, \frac{B-A}{A+B}  \, \left[ 1 - \frac{ \Big( 1 + \frac{(B-A)^2}{(A+B)^2} \Big)  \, \tanh ^2(x)}{1 + \frac{(B-A)^2}{(A+B)^2}  	\, \tanh ^2(x)} \right] \, .
		\end{aligned}
	\end{align}
	Let $0=y_0<y_1<\ldots<y_k<\ldots$ denote the zeroes of $h'$.

	Now, since $ E $ and $ F $ are strictly increasing, the proposition follows if we are able to show the claim:
	\begin{align*}
		\label{eq:oscillations_extrema_claim}\tag{D}
		{\parbox[c]{32em}{\centering \itshape Between two consecutive zeroes of $ h $ in $ (0, \infty ) $ there is exactly one zero of $ h' $ and vice versa.}}
	\end{align*}
	Claims (\ref{Claim:C}) and (\ref{eq:oscillations_extrema_claim}) yield the proof of the proposition,
	because:
	\begin{itemize}
	 \item Case $B\ge 0$, $0$ is a local maximum of $h$:\\
	 We shall see that $0=y_0<x_1<\ldots < y_k < x_{k+1}< y_{k+1}<\ldots$ and the above claims yield:
	 $$
	 \sup_{x\in [x_{k-1},x_k]} |h(x)|=
	 |h(y_{k-1})| <\sup_{x\in [x_{k-1},x_k]} E(x)\le \inf_{x\in [x_{k},x_{k+1}]} E(x)<|h(y_k)|
	 =\sup_{x\in [x_{k},x_{k+1}]} |h(x)|.
	 $$
	 The strict inequalities above are due to the strict monotonicity of $E$ and to the facts that $h(x_k)=0$ and that $|\cos(\,.\,)|=1$ once on each $[x_k,x_{k+1}]$.
	 \item Case $B < 0$, $0$ is a local minimum of $h$:
	 We shall see that $0=y_0<y_1 <x_1<\ldots < y_k < x_{k}< y_{k+1}<\ldots$ and the above claims yield:
	 $$
	 \sup_{x\in [x_{k-1},x_k]} |h(x)|=
	 |h(y_k)| <\sup_{x\in [x_{k-1},x_k]} E(x)\le \inf_{x\in [x_{k},x_{k+1}]} E(x)<|h(y_{k+1})|
	 =\sup_{x\in [x_{k},x_{k+1}]} |h(x)|.
	 $$ 
	\end{itemize}
	Note that the local extremum at $ x=0 $ has always the smallest absolute value among all local extrema. \\ \-

	To prove the claim we need to discuss several cases (see Figure~\ref{fig:cases_phi_psi}):
	\begin{figure}[h]
		\centering
		\includegraphics[]{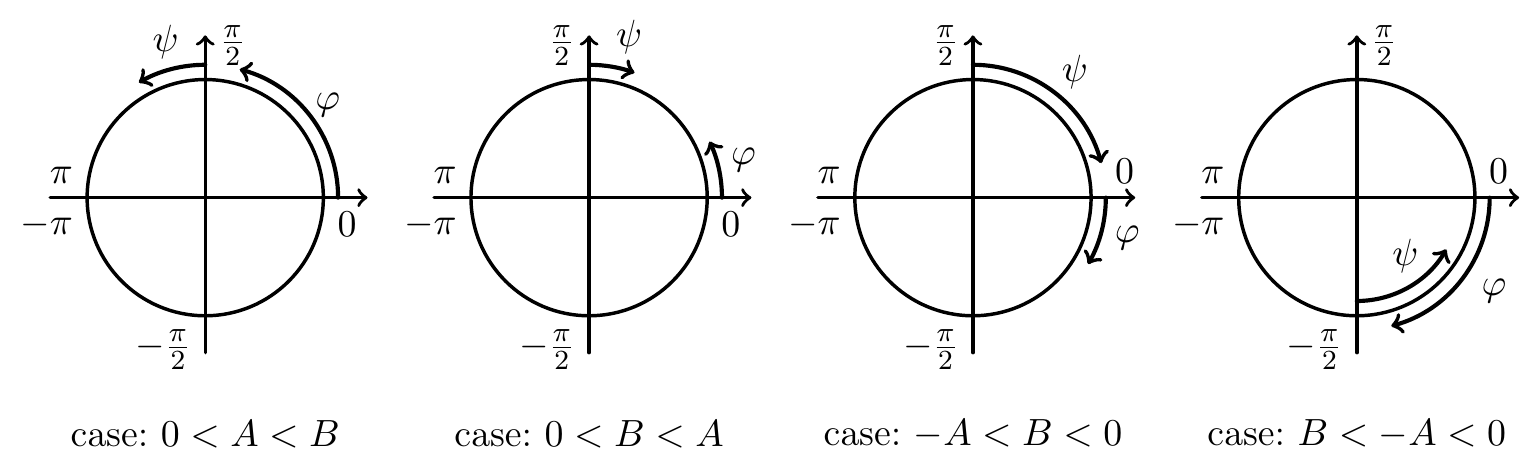}
		\caption{Sketch of the qualitative behaviour of $ \varphi (x) $ and $ \psi (x) $ as $ x $ is increased in four different cases. 
		Here the cases $ B=A $, $ B=0 $ and $ B=-A $ are omitted.}
		\label{fig:cases_phi_psi}
	\end{figure}
	~\\ \-

	\noindent{\itshape Case $ 0< A \le B $:} Combining \eqref{eq:oscillations_extrema_phi_derivative} and 
	\eqref{eq:oscillations_extrema_psi_derivative} with Lemma \ref{lem:oscillations_monotonicity_aux} by 
	setting $ K= \frac{B}{A}  $, resp. $ K = \frac{B-A}{A+B}  $, and $ y = \tanh (x) $ we see that $ \varphi  $ 
	and $ \psi  $ are strictly increasing with
	\begin{align*}
		\varphi (0) \,&=\, 0 \, , & \varphi (\infty ) \, := \, \lim _ {x \rightarrow \infty } \varphi (x) \,
		&=\, \arcsin \bigg( \frac{B}{\sqrt{ A^2 + B^2 }  } \bigg) \, \in \, \Big( 0, \frac{\pi }{2} \Big)  \, , \\
		\psi (0) \,&=\, \frac{\pi }{2} \, ,  & \psi (\infty ) \, := \, \lim _ {x \rightarrow \infty } \psi (x) \, 
		&= \, \arccos \Bigg( \frac{A-B}{\sqrt{ (A+B)^2 + (A-B)^2 }  } \Bigg) \, \in \, \Big[ \frac{\pi }{2}, \pi \Big) \, . %]
	\end{align*}
	This immediately implies for all $ x > 0 $
	\begin{align}
		\label{eq:oscillations_extrema_separated}
		\varphi (x) \, < \, \psi (x) \, < \, \varphi  (x) + \pi \, , \qquad \text{i.\,e.} \qquad x + \varphi (x) \, < \, x+ \psi (x) \, < \,  x+ \varphi  (x) + \pi \, .
	\end{align}
	Since  $  \mathrm{Id} + \psi  $ is strictly increasing we have that $y_k+\psi(y_k)=(k+1/2)\pi$ and further 
	\begin{eqnarray*}
	 x_k+\psi(x_k)&<&x_k+\varphi (x_k)+\pi= \left( k+\frac12\right)\pi\\
	 &=&y_k+\psi(y_k) =x_{k+1}+\varphi(x_{k+1}) < x_{k+1}+\psi(x_{k+1}).
	 .
	\end{eqnarray*}
	Hence, again by strict monotonicity of $  \mathrm{Id} + \psi  $
	$$
	x_k<y_k<x_{k+1},
	$$
	which proves Claim \eqref{eq:oscillations_extrema_claim}. \\ \-

	\noindent{\itshape Case $ 0 \le  B < A $:} Applying Lemma~\ref{lem:oscillations_monotonicity_aux} once again 
	we see that $ \varphi  $ is still strictly increasing, but $ \psi  $ is strictly decreasing with
	\begin{align*}
		&\varphi (0) \,=\, 0 \, , \quad \varphi (\infty ) \, \in \,   \Big[ 0, \frac{\pi }{2} \Big)  \, , \\ %]\qquad 
		&\psi (0) \, = \,  \frac{\pi }{2} \, , \quad  \psi (\infty )=\arcsin\left(\frac{A+B}{\sqrt{(A+B)^2+(A-B)^2}} \right) 
		\, \in \,  \Big( 0,\frac{\pi }{2} \Big) \, . 
	\end{align*}
	This implies for all $ x>0 $ that $ | \psi (x) - \varphi (x)| < \frac{\pi }{2} $. Moreover, since $B < A$
	\begin{displaymath}
	 \varphi ( \infty ) =  \arcsin \bigg( \frac{B}{\sqrt{ A^2 + B^2 }  } \bigg) \,  < \, \arcsin \Bigg( \frac{A+B}{\sqrt{ (A+B)^2 + (A-B)^2 }  } \bigg) = \psi ( \infty ),
	\end{displaymath}
	and \eqref{eq:oscillations_extrema_separated} still holds. 
	Obviously $ \mathrm{Id} + \varphi  $ is strictly increasing. But as $ \frac{B-A}{A+B} \ge -1 $ we also get 
	from Lemma \ref{lem:oscillations_monotonicity_aux} that $ \mathrm{Id} + \psi $ is strictly  increasing
	so that $y_k+\psi(y_k)=(k+1/2)\pi$ as above. Like in the previous case we obtain again the validity of 
	Claim \eqref{eq:oscillations_extrema_claim}. \\ \-

	\noindent{\itshape Case $ -A<B<0 $:} $ \varphi  $ and $ \psi  $ are strictly decreasing with 
	\begin{align*}
		\varphi (0) \,=\, 0 \, , \quad \varphi (\infty ) \,  \in \,   \Big( - \frac{\pi }{2} ,0 \Big)  \, , \qquad \psi (0) \,
		= \,  \frac{\pi }{2} \, , \quad  \psi (\infty ) \, \in \,  \Big( 0,\frac{\pi }{2} \Big) \, . 
	\end{align*}
	Similar to the first case this immediately implies \eqref{eq:oscillations_extrema_separated}. The strict monotonicity of
	$ \mathrm{Id}  + \varphi  $ follows from Lemma \ref{lem:oscillations_monotonicity_aux} like in the previous step as 
	$ \frac{B}{A} > -1 $. {From} Lemma \ref{lem:oscillations_monotonicity_aux} with $ \frac{B-A}{A+B} < -1 $ we also know 
	that $ \mathrm{Id} + \psi  $ possesses a zero at $ x_ \ast < \tanh ^{-1}\big( 1/ \sqrt{ 2 }\big) \approx 0.8814  $ 
	so that for $ x > x_ \ast  $ it is strictly increasing and decreasing for $ x \in (0, x_ \ast ) $. 
	In the latter situation we have
	\begin{align*}
		&- \frac{\pi }{2} \,<\, 0 + \varphi (0) \,<\, x + \varphi ( x  ) \,<\,x_ \ast + \varphi (x_ \ast )\,<\, x_ \ast + \psi (x_ \ast ) \,\le\,x + \psi (x) \,<\, 0 + \psi (0) \,=\, \frac{\pi }{2} \, ,
	\end{align*}
	so that neither $ h  $ nor $ h' $ have a zero in $ (0, x_ \ast ] $. 
	Hence $y_1>x_ \ast$ and
	$$
	y_0+\psi(y_0)=\frac{\pi}{2},\qquad \forall k\in\mathbb{N}:\quad y_k+\psi(y_k)=\left(k-\frac12 \right)\pi.
	$$
	For $k\in\mathbb{N}$ we obtain using the monotonicity behaviour of $ \mathrm{Id}  + \varphi  $ and $ \mathrm{Id} + \psi  $:
	\begin{eqnarray*}
	 y_k+\varphi(y_k) < y_k+\psi(y_k) =\left(k-\frac12 \right)\pi = x_k +\varphi(x_k) &\Rightarrow& y_k<x_k,\\
	 y_{k+1}+\psi(y_{k+1})=\left(k+\frac12 \right)\pi = x_k +\varphi(x_k) +\pi >  x_k +\psi(x_k)
	 &\Rightarrow& x_k < y_{k+1}.
	\end{eqnarray*}
	We conclude that $0=y_0<y_1<x_1<\ldots <y_k<x_k< y_{k+1}<\ldots$, which is 
	again  Claim \eqref{eq:oscillations_extrema_claim}. \\ \-

	\noindent{\itshape Case $ B \le -A<0 $:} We only prove the claim for $ B < -A $. The case $ B= -A $ follows from a similar discussion. 
	Here $ \varphi  $ is strictly decreasing and $ \psi  $ is strictly increasing with 
	\begin{align*}
		\varphi (0) \,=\, 0 \, , \quad \varphi (\infty ) \, \in \,   \Big( - \frac{\pi }{2} ,0 \Big)  \, , 
		\qquad \psi (0) \,=\,  -\frac{\pi }{2} \, , \quad  \psi (\infty ) \, \in \,  \Big( -\frac{\pi }{2} , 0\Big) \, . 
	\end{align*}
	We first would like to establish the separation property \eqref{eq:oscillations_extrema_separated}. Obviously $ | \varphi (x) - \psi (x) | < \frac{\pi }{2}    $ holds for all $ x > 0 $. Unfortunately $ \varphi  $ and $ \psi  $ will intersect in a point $  x_ \ast  > 0 $ so that \eqref{eq:oscillations_extrema_separated} will only hold for $ x >  x_ \ast  $. For $ x > 0 $ we consider the ``distance'' between $ \varphi  $ and $ \psi  $ given by
	\begin{align}
		G(x)^2 \, &:= \, \Big[ \cos  \big( \varphi (x)\big) - \cos  \big(  \psi (x)\big)  \Big] ^2 + \Big[ \sin \big( \varphi (x)\big) - \sin \big( \psi (x)\big)  \Big] ^2 \nonumber \\
		&\phantom{:} = \, \cos ^2 \big( \varphi (x)\big) - 2 \, \cos \big( \varphi (x)\big) \, \cos \big( \psi (x)\big)   + \cos ^2 \big( \psi (x)\big) \nonumber  \\
		&\mathrel{\phantom{=}} \, + \sin ^2 \big( \varphi (x)\big)  - 2 \, \sin \big( \varphi (x)\big) \, \sin \big( \psi (x)\big)  + \sin ^2 \big( \psi (x)\big) \label{G}  \\
		&\phantom{:} =  \, 2 - 2 \, \cos \big( \varphi (x)\big) \, \cos \big( \psi (x)\big) - 2 \, \sin \big( \varphi (x)\big) \, \sin \big( \psi (x)\big) \nonumber  \\
		&\phantom{:} = \, 2 - \frac{2  A \, (A-B)}{E(x) \, F(x)} \, \cosh (x) \, \sinh (x) - \frac{2  B \, (A+B)}{E(x) \, F(x)} \, \sinh (x) \, \cosh (x) \nonumber  \\
		&\phantom{:} = \, 2 - 2 \, \frac{(A^2 + B^2) \, \cosh (x) \, \sinh (x)}{\big[ A^2 \, \cosh ^2(x) + B^2 \, \sinh ^2(x)\big] ^ {1/2} \, \big[ (A+B)^2 \, \cosh ^2(x) + (A-B)^2 \, \sinh ^2(x)\big] ^ {1/2 } } \nonumber   \\
		&\phantom{:} = \, 2 -  2 \,  \,\frac{(A^2 + B^2) \, \tanh (x)}{\big[ A^2 + B^2 \, \tanh ^2(x)\big] ^ {1/2} \, \big[ (A+B)^2 + (A-B)^2 \, \tanh ^2(x)\big] ^ {1/2 } }  \nonumber .
	\end{align}
	If we substitute $y=\tanh(x)$ we find that $G(x)^2=0$ if
	\begin{displaymath}
	(A-B)^2 B^2 y^4 - 2 A B (A^2 - B^2) y^2 + A^2 (A+B)^2 =0,
	\end{displaymath}
	whose  positive solution is given by $y_\ast=\bigl( \frac{A (A+B)}{B(A-B)} \bigr)^ { \frac{1}{2} }$. Recalling that $B < -A<0$ we find 
	\begin{align*}
	 y_\ast \,=\, \Bigg( \frac{A \big( | B | -A\big)}{| B | \big(A+ | B | \big)} \Bigg)^ { \frac{1}{2} } \, < \, \Bigg( \frac{A | B | }{| B | \big(A+ | B | \big)} \Bigg)^ { \frac{1}{2} } \,<\,  \frac{1}{\sqrt{ 2 } }.   
	\end{align*}
	Thus $ \varphi  $ and $ \psi  $ will intersect in the point $ x_ \ast = \tanh^{-1}(y_\ast)  < \tanh ^{-1}\big( 1/ \sqrt{ 2 }\big) \approx 0.8814 $. {From} the strict monotonicity of $ \varphi  $ and $ \psi  $ we then get
	that $ \psi (x) < \varphi (x) $ for $ x < x_ \ast  $ and $ \psi (x) > \varphi (x) $ for $ x > x_ \ast  $. 
	Since $\psi<0$ and $\varphi<0$ we find that $x_1>\frac{\pi}{2}, y_1>\frac{\pi}{2}$ and 
	$$
	\forall k\in\mathbb{N}_0:\quad y_k+\psi(y_k)=\left(k-\frac12\right)\pi.
	$$
	As in the previous case we see that $ \mathrm{Id} + \varphi  $ is strictly increasing on $[\pi/2,\infty)$ and $ \mathrm{Id} + \psi  $ 
	on $[0,\infty)$. This yields for $k\in\mathbb{N}$:
	\begin{eqnarray*}
	 y_k+\psi(y_k)  =\left(k-\frac12 \right)\pi = x_k +\varphi(x_k)< x_k +\psi(x_k)&\Rightarrow& y_k<x_k,\\
	 y_{k+1}+\psi(y_{k+1})=\left(k+\frac12 \right)\pi = x_k +\varphi(x_k) +\pi >  x_k +\psi(x_k)
	 &\Rightarrow& x_k < y_{k+1}.
	\end{eqnarray*}
	We conclude that $0=y_0<y_1<x_1<\ldots <y_k<x_k< y_{k+1}<\ldots$, so that again
	 Claim \eqref{eq:oscillations_extrema_claim} holds true.
\end{proof}

%-----------------------------------------------------------------------
\begin{remark}
	The proof of Proposition \ref{prop:oscillations_extrema} also shows, that we always have
	\begin{align*}
		\lim _ {x \rightarrow \infty } \big( \psi (x) - \varphi (x)\big) \,=\, \frac{\pi }{4} \, .
	\end{align*}
	In fact, recalling the function $G$ in (\ref{G}) we obtain
	\begin{displaymath}
		\lim _ {x \rightarrow \infty } G (x) = 2 - 2 \frac{A^2 + B^2}{ \big[ A^2 + B^2\big] ^ {1/2} \, \big[ 2A^2 + 2B^2\big] ^ {1/2} } 
		=  2 - \sqrt{2}\, 
	\end{displaymath}
	which  shows that the points $\exp(i\psi(\infty))$ and $\exp(i\varphi(\infty))$ 
	on the unit circle are $\sqrt{2-\sqrt{2}}$ apart from each other. The claim now follows from $2\arcsin(\sqrt{2-\sqrt{2}}/2)=\pi/4$.
\end{remark}

%-----------------------------------------------------------------------
\begin{corollary}
	\label{cor:oscillations_max} 
	Let $ a > 0 $ be given. Then the function
	\begin{align*}
		[0,1] \ni x \, \mapsto \, &\big[ \sinh (a) \, \cos (a) + \cosh (a) \, \sin (a)\big] \, \cosh (ax) \, \cos (ax)  \\
		& \qquad  - \big[ \sinh (a) \, \cos (a) - \cosh (a) \, \sin (a)\big] \, \sinh (ax) \, \sin (ax)
	\end{align*}
	attains its maximum at $ x=1 $.
\end{corollary}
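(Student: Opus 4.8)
The plan is to recognise the given function as one of the type covered by Proposition~\ref{prop:oscillations_extrema} and to exploit that the absolute values of its local extrema are strictly increasing on $[0,\infty)$. Write
$$
h(x) := A\,\cosh(ax)\cos(ax) - B\,\sinh(ax)\sin(ax),\qquad
A := \sinh(a)\cos(a)+\cosh(a)\sin(a),\quad B := \sinh(a)\cos(a)-\cosh(a)\sin(a),
$$
so that the assertion is $\max_{x\in[0,1]}h(x)=h(1)$. Since $\sinh(a)\neq 0$ for $a>0$, the constants $A$ and $B$ cannot vanish simultaneously, so Proposition~\ref{prop:oscillations_extrema} applies to $h$: its local extrema on $[0,\infty)$ are isolated, the point $x=0$ is among them, and $|h(x_1)|<|h(x_2)|$ whenever $0\le x_1<x_2$ are local extrema of $h$.

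Next I would collect three elementary facts about the point $x=1$. First, a direct computation using $\cos^2 a+\sin^2 a=1$ and $\cosh^2 a-\sinh^2 a=1$ gives $h(1)=\sinh(a)\cosh(a)+\sin(a)\cos(a)=\tfrac12\bigl(\sinh(2a)+\sin(2a)\bigr)>0$ for $a>0$ (using $\sinh t>|\sin t|$ for $t>0$). Second, with $A-B=2\cosh(a)\sin(a)$ and $A+B=2\sinh(a)\cos(a)$, the formula $h'(x)=a\bigl[(A-B)\sinh(ax)\cos(ax)-(A+B)\cosh(ax)\sin(ax)\bigr]$ from the proof of Proposition~\ref{prop:oscillations_extrema} yields $h'(1)=a\bigl[2\cosh(a)\sin(a)\sinh(a)\cos(a)-2\sinh(a)\cos(a)\cosh(a)\sin(a)\bigr]=0$, so $x=1$ is a critical point of $h$. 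Third, differentiating once more gives $h''(1)=a^2\bigl(\sin(2a)-\sinh(2a)\bigr)<0$ for $a>0$; hence $x=1$ is a \emph{strict local maximum} of $h$.

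Finally I would argue by contradiction. Let $x^{\ast}\in[0,1]$ be a point where $h|_{[0,1]}$ attains its maximum. If $x^{\ast}\in(0,1)$, then $x^{\ast}$ is an interior local maximum, hence a local extremum of $h$; since $x^{\ast}<1$ and $x=1$ is also a local extremum, Proposition~\ref{prop:oscillations_extrema} gives $|h(x^{\ast})|<|h(1)|=h(1)$, so $h(x^{\ast})<h(1)$, contradicting maximality. If $x^{\ast}=0$, then $x=0$ is a local extremum with $0<1$, so $h(0)\le|h(0)|<|h(1)|=h(1)$, again a contradiction. Therefore $x^{\ast}=1$, which is the claim. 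I do not expect a genuine obstacle in this argument: Proposition~\ref{prop:oscillations_extrema} does the real work, and the only care required is the sign bookkeeping behind $h(1)=\tfrac12(\sinh 2a+\sin 2a)>0$, $h'(1)=0$ and $h''(1)=a^2(\sin 2a-\sinh 2a)<0$, all of which reduce to the elementary inequality $\sinh t>|\sin t|$ for $t>0$.
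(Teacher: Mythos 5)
Your proposal is correct and follows essentially the same route as the paper: apply Proposition~\ref{prop:oscillations_extrema} with the same choice of $A$ and $B$, check that $A,B$ do not vanish simultaneously, verify $h(1)=\sinh(a)\cosh(a)+\sin(a)\cos(a)>0$, $h'(1)=0$, $h''(1)=a^2(\sin(2a)-\sinh(2a))<0$ so that $x=1$ is a strict local maximum, and then invoke the monotonicity of the absolute values of the local extrema. Your explicit contradiction/case split ($x^\ast\in(0,1)$ versus $x^\ast=0$) is just a more detailed phrasing of the paper's final sentence, not a different argument.
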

\begin{proof}
	Here we apply Proposition \ref{prop:oscillations_extrema} with 
	\begin{align*}
		A \, &:= \, \sinh (a) \, \cos (a) + \cosh (a) \, \sin (a) \, , \\
		B \, &:= \, \sinh (a) \, \cos (a) - \cosh (a) \, \sin (a) \, .
	\end{align*}
	Note that we can exclude the case $ A = B = 0 $. Otherwise we would have $ \cos (a) = \sin (a) = 0 $ which is not possible. 
	Let $ h $ denote the function from Proposition \ref{prop:oscillations_extrema} with our choice for $ A $, $ B $ and $ a $. 
	In what follows we show, that $ x=1 $ is a local maximum of $ h $. Therefore we compute
	\begin{align*}
		h(x) \, &= \, A \, \cosh (ax) \, \cos (ax) - B  \, \sinh (ax) \, \sin (ax) \\
		&= \, \big[ \sinh (a) \, \cos (a) + \cosh (a) \, \sin (a)\big] \, \cosh (ax) \, \cos (ax) \\
		&\mathrel{\phantom{=}} \, - \big[ \sinh (a) \, \cos (a) - \cosh (a) \, \sin (a)\big] \, \sinh (ax) \, \sin (ax) \, ,\\[0.5em]
		\frac{1}{a} \, h'(x) \, &= \,  (A-B) \, \sinh (ax) \, \cos (ax) - (A + B) \, \cosh (ax) \, \sin (ax)  \\
		&= \, 2 \, \cosh (a) \, \sin (a) \, \sinh (ax) \, \cos (ax) - 2 \, \sinh (a) \, \cos (a) \, \cosh (ax) \, \sin (ax) \, , \\[0.5em]
		\frac{1}{a^2} \, h''(x) \, &= \,  - 2A \, \sinh (ax) \, \sin (ax) -2B \, \cosh (ax) \, \cos (ax) \\
		&= \, - 2 \, \big[ \sinh (a) \, \cos (a) + \cosh (a) \, \sin (a)\big] \, \sinh (ax) \, \sin (ax) \\
		&\mathrel{\phantom{=}} \,   -2 \, \big[ \sinh (a) \, \cos (a) - \cosh (a) \, \sin (a)\big] \, \cosh (ax) \, \cos (ax) \, .
	\intertext{We conclude  that}
		h(1) \, &= \, \sinh (a) \, \cosh (a) \, \cos ^2(a) + \cosh ^2(a) \, \sin (a) \, \cos (a) \\
		&\mathrel{\phantom{=}} \, - \sinh ^2(a) \, \sin (a) \, \cos (a) + \sinh (a) \, \cosh (a) \, \sin ^2(a)  \\
		&= \, \sinh (a) \, \cosh (a) + \sin (a) \, \cos (a) \\
		& > \, 0 \, ,\\[0.5em]
		\frac{1}{a}\, h'(1) \, &= \, 2 \, \cosh (a) \, \sin (a) \, \sinh (a) \, \cos (a) - 2 \, \sinh (a) \, \cos (a) \, \cosh (a) \, \sin (a) \\
		&= \, 0 \, ,\\[0.5em]
		\frac{1}{a^2}\, h''(1) \, &= \, -2 \, \sinh ^2(a) \, \sin (a) \, \cos (a) - 2 \, \sinh (a) \, \cosh (a) \, \sin ^2(a) \\
		&\mathrel{\phantom{=}} \, -2 \,  \sinh (a) \, \cosh (a) \, \cos ^2(a)  + 2 \, \cosh^2 (a) \, \sin (a) \, \cos (a) \\
		&= \, - 2 \, \sinh (a) \, \cosh (a) + 2 \, \sin (a) \, \cos (a) \\
		& < \, 0 \, .
	\end{align*}
	{From} Proposition \ref{prop:oscillations_extrema} we conclude, that $ h(1) $ as a local maximum satisfies 
	\begin{align*}
		h(1) \,=\, \big| h(1)\big| > \big| h(x) \big| \,\ge\, h(x) \, ,
	\end{align*}
	where $ x \in [0,1) %]
	$ is an arbitrary local extremum. In particular this must also hold not only for $ x $ being a local extremum, but for any arbitrary point in $ [0,1 ) %]
	$ which proves the claim.
\end{proof}

\begin{lemma}\label{lem:calc_1}
We consider the function
$$
f:D_f:= [0,\infty)\setminus \pi \mathbb{N} \to \mathbb{R},\quad f(x):=\frac{\tanh(x)}{\tan(x)}.
$$
This function is strictly decreasing on each connected component of its domain of definition,
$$
\forall x\in D_f\setminus \{0\}: f'(x) <0.
$$
Since $f(0)=1$ and $\lim_{x\to k\pi\pm 0} f(x)=\pm\infty$ there exists a smallest strictly positive solution 
$a_c\in (\pi,\frac{3}{2}\pi)$ of the equation $f(x)=1$, i.e.
$$
\tan(a_c)=\tanh(a_c),\quad \forall x\in (0,\pi):\quad  \frac{\tanh(x)}{\tan(x)}<1,\quad 
\forall x\in (\pi,a_c):\quad \frac{\tanh(x)}{\tan(x)}>1.
$$
\begin{figure}[h] 
\centering 
%~ \resizebox{!}{8cm}{\includegraphics{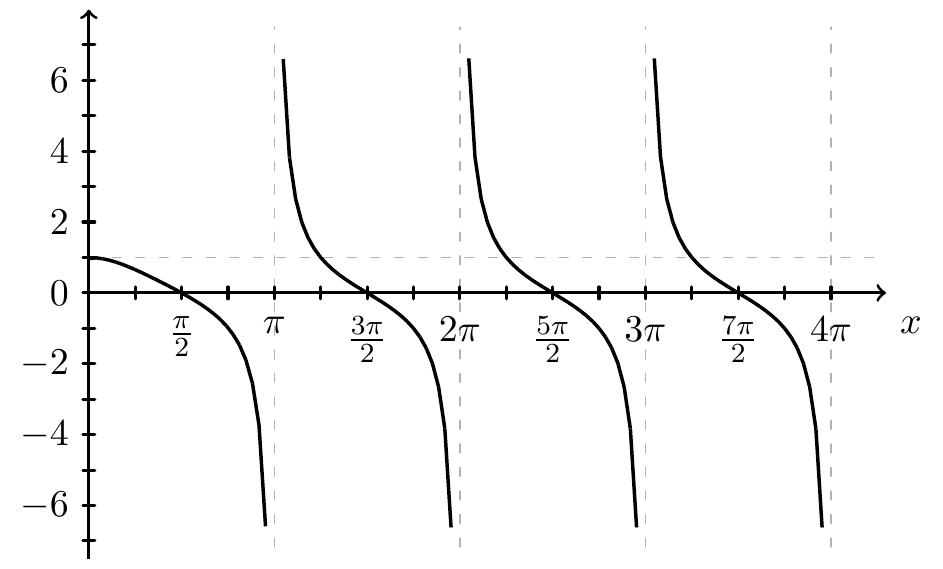}}
\includegraphics{pictures/tanh_tan.pdf}
%~ \vspace{-3.5cm}
\caption{Graph of $x\mapsto \frac{\tanh(x)}{\tan(x)}$.}
\label{fig:appendix_1} 
\end{figure} 
\end{lemma}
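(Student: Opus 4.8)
The plan is to deduce both statements from the sign of $f'$. Writing $f(x)=\tanh(x)\cot(x)$ (this is the natural smooth extension, which also makes $f$ defined at the points $x=\tfrac\pi2+k\pi$), a direct differentiation gives
\[
f'(x)=\frac{\cot(x)}{\cosh^2(x)}-\frac{\tanh(x)}{\sin^2(x)}
=\frac{\sin(x)\cos(x)-\sinh(x)\cosh(x)}{\sin^2(x)\cosh^2(x)}
=\frac{\sin(2x)-\sinh(2x)}{2\,\sin^2(x)\cosh^2(x)}.
\]
Since $\sinh(t)>t\ge\sin(t)$ for every $t>0$, the numerator is strictly negative for $x>0$ while the denominator is positive on $D_f$; hence $f'(x)<0$ for all $x\in D_f\setminus\{0\}$, so $f$ is strictly decreasing on each connected component of $D_f$, namely on $[0,\pi)$, $(\pi,2\pi)$, $(2\pi,3\pi)$, and so on. At $x=0$ one has $f(0)=\lim_{x\to0}\tanh(x)/\tan(x)=1$ (numerator and denominator are both $\sim x$), and in fact $f'(0)=0$, which is why this point is excluded.

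Next I would locate $a_c$. On the first component $[0,\pi)$, strict monotonicity together with $f(0)=1$ gives $f(x)<1$ for all $x\in(0,\pi)$; as $\pi\notin D_f$, the equation $f(x)=1$ has no solution in $(0,\pi]$. On the component $(\pi,2\pi)$ one has $f(x)\to+\infty$ as $x\to\pi^+$ (because $\tanh(\pi)>0$ while $\tan(x)\to0^+$) and $f(\tfrac32\pi)=\tanh(\tfrac32\pi)\cot(\tfrac32\pi)=0$; since $f$ is continuous and strictly decreasing on $(\pi,2\pi)$, the intermediate value theorem produces a unique $a_c\in(\pi,\tfrac32\pi)$ with $f(a_c)=1$, and moreover $f>1$ on $(\pi,a_c)$ and $f<1$ on $(a_c,2\pi)$. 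On $D_f$ the identity $f(x)=1$ is equivalent to $\tan(x)=\tanh(x)$ (at the points where $\tan$ is not finite we have $f=0\ne1$, and the zeros of $\tan$ are excluded from $D_f$), so $a_c$ is precisely the smallest strictly positive root of $\tan=\tanh$, and the two displayed sign inequalities in the statement follow from $f<1$ on $(0,\pi)$ and $f>1$ on $(\pi,a_c)$. The boundary behaviour $\lim_{x\to k\pi\pm0}f(x)=\pm\infty$ used in the statement is immediate from $\tan(k\pi\pm0)=0^{\pm}$ and $\tanh(k\pi)>0$.

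There is no serious obstacle here: the only point requiring mild care is the reading of $f$ at $x=0$ and at the poles $x=\tfrac\pi2+k\pi$ of $\tan$, where one must use the removable extension $f=\tanh(x)\cot(x)$ so that $f$ is genuinely $C^\infty$ on all of $D_f$ and the derivative formula above — hence the strict monotonicity — is valid throughout each component.
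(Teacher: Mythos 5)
Your proposal is correct and follows essentially the same route as the paper: you arrive at the identical formula $f'(x)=\frac{\sin(2x)-\sinh(2x)}{2\sin^2(x)\cosh^2(x)}<0$ (via the product rule on $\tanh\cdot\cot$ instead of the quotient rule on $\frac{\sinh\cos}{\cosh\sin}$, an immaterial difference) and conclude strict monotonicity. Your explicit intermediate-value argument locating $a_c\in(\pi,\tfrac32\pi)$ and the remark about reading $f$ as $\tanh(x)\cot(x)$ at the poles of $\tan$ merely spell out details the paper leaves implicit in the statement.
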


\begin{proof}
We calculate for $x\in D_f\setminus \{0\}$:
\begin{eqnarray*}
 f(x) &=& \frac{\sinh(x)\cos (x)}{\cosh(x)\sin (x)} \\
 f'(x) &=& \frac{(\cosh(x)\cos (x) -\sinh(x)\sin (x))\cdot \cosh(x)\sin (x)-\sinh(x)\cos (x)\cdot (\sinh(x)\sin (x)+\cosh(x)\cos (x))  }{\cosh^2(x)\sin^2 (x)}\\
 &=& \frac{\sin (x)\cos (x)\cdot (\cosh^2(x)-\sinh^2(x) )
 -\sinh (x)\cosh (x)\cdot (\sin^2 (x) + \cos^2 (x))}{\cosh^2(x)\sin^2 (x)}\\
 &=& \frac{\sin (x)\cos (x) -\sinh (x)\cosh (x)}{\cosh^2(x)\sin^2 (x)}
 = \frac{\sin (2x) -\sinh (2x)}{2\cosh^2(x)\sin^2 (x)}<0.
\end{eqnarray*}

\end{proof}

\begin{lemma}\label{lem:calc_2}
Let $a_c\in (\pi,\frac{3}{2}\pi))$ be as in Lemma~\ref{lem:calc_1}. Then for $a>0$ we have that
$$
\forall x\in (0,a):\quad 
\cosh(a)\sin(a) \sinh(x)\cos(x) - \sinh(a)\cos(a) \cosh(x)\sin(x) >0,
$$
iff
$$
a\in (0,a_c].
$$
\end{lemma}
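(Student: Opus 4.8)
The plan is to reduce the inequality to the monotonicity properties of the function $f$ from Lemma~\ref{lem:calc_1}, which I write in the form $f(x)=\frac{\sinh(x)\cos(x)}{\cosh(x)\sin(x)}$ so that it is smooth and strictly decreasing on each of the components $[0,\pi),(\pi,2\pi),(2\pi,3\pi),\dots$ of its domain, with $f(0)=1$, $\sup_{(0,\pi)}f=1$ (attained only in the limit $x\to0^+$) and $f(a_c)=1$, $a_c\in(\pi,\tfrac32\pi)$. Abbreviating the left-hand side by
\[
g_a(x):=\cosh(a)\sin(a)\,\sinh(x)\cos(x)-\sinh(a)\cos(a)\,\cosh(x)\sin(x),
\]
the key identity is that, whenever $\sin(a)\sin(x)\neq0$,
\[
g_a(x)=\cosh(a)\cosh(x)\,\sin(a)\sin(x)\,\bigl(f(x)-f(a)\bigr),
\]
so that the sign of $g_a(x)$ is the sign of $\sin(a)\sin(x)$ times the sign of $f(x)-f(a)$. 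Everything then follows by chasing these two sign factors together with the monotonicity of $f$ on each component.

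For the implication ``$a\le a_c\Rightarrow g_a>0$ on $(0,a)$'' I would distinguish two ranges. If $a\in(0,\pi]$: when $a<\pi$, for $x\in(0,a)$ both $\sin(a)$ and $\sin(x)$ are positive and $f(x)>f(a)$ by monotonicity on $(0,\pi)$, hence $g_a(x)>0$; the degenerate endpoint $a=\pi$ is immediate since then $g_a(x)=\sinh(\pi)\cosh(x)\sin(x)>0$ on $(0,\pi)$. If $a\in(\pi,a_c]$: here $\sin(a)<0$ and, by monotonicity on $(\pi,2\pi)$, $f(a)\ge f(a_c)=1$; I would split $x\in(0,a)$ into $x\in(0,\pi)$ (where $\sin(x)>0$, hence $\sin(a)\sin(x)<0$, and $f(x)<1\le f(a)$, so $g_a(x)>0$), the single point $x=\pi$ (where $g_a(\pi)=-\cosh(a)\sin(a)\sinh(\pi)>0$), and $x\in(\pi,a)\subset(\pi,\tfrac32\pi)$ (where $\sin(x)<0$, hence $\sin(a)\sin(x)>0$, and $f(x)>f(a)$ by monotonicity on $(\pi,2\pi)$, so $g_a(x)>0$).

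For the converse I argue by contraposition: suppose $g_a>0$ on $(0,a)$ and, aiming at a contradiction, $a>a_c$. If $a\in(a_c,2\pi)$ then $\sin(a)<0$; on $(0,\pi)\subset(0,a)$ the identity forces $f(x)<f(a)$ for all $x$, and letting $x\to0^+$ gives $f(a)\ge1$, contradicting $f(a)<f(a_c)=1$ (monotonicity on $(\pi,2\pi)$). If $a=2\pi$ then $g_a(x)=-\sinh(2\pi)\cosh(x)\sin(x)<0$ on $(0,\pi)$, a contradiction. If $a>2\pi$ and $\sin(a)\neq0$, then $g_a(\pi)=-\cosh(a)\sin(a)\sinh(\pi)$ and $g_a(2\pi)=\cosh(a)\sin(a)\sinh(2\pi)$ have strictly opposite signs while $\pi,2\pi\in(0,a)$, so $g_a$ cannot be positive throughout $(0,a)$. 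Finally, if $a>2\pi$ and $\sin(a)=0$, then $\cos(a)=\pm1$ and a direct inspection shows $g_a<0$ on $(0,\pi)$ (if $\cos a=1$) or on $(\pi,2\pi)$ (if $\cos a=-1$). In every case we reach a contradiction, so $a\le a_c$.

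I expect the main obstacle to be precisely this converse for large $a$: the clean factorisation controls $g_a$ only away from the zeros of $\sin$, so one must be careful about the points $x\in\pi\mathbb{N}$ and the degenerate parameters $a\in\pi\mathbb{N}$, and one must check that $f$ really is continuous and strictly monotone across the half-integer multiples of $\pi$ inside each component (which is why the form $f=\sinh\cos/(\cosh\sin)$, rather than $\tanh/\tan$, is the convenient one). For the remark following the lemma one records in addition that for large $x$ and $a$ one has $g_a(x)\approx\cosh(a)\cosh(x)\sin(a-x)$, so $g_a$ has of order $a/\pi$ sign changes on $(0,a)$; substituting $a=\beta=\tfrac1{\alpha\sqrt2}$ then yields the stated increasing number of sign changes of $\mathit{rc}_\alpha'$ as $\alpha\searrow0$.
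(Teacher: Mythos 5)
Your proof is correct. The ``if'' direction is essentially the paper's own argument: your factorisation $g_a(x)=\cosh(a)\cosh(x)\,\sin(a)\sin(x)\,(f(x)-f(a))$ is just an explicit repackaging of the paper's step of multiplying the inequality $f(x)\gtrless f(a)$ by $\cosh(a)\sin(a)\cosh(x)\sin(x)$ and tracking the sign of $\sin(a)\sin(x)$, and your case split $a\in(0,\pi)$, $a=\pi$, $a\in(\pi,a_c]$ (with the sub-split $x\in(0,\pi)$, $x=\pi$, $x\in(\pi,a)$) matches the paper's. Where you genuinely diverge is the converse for $a>a_c$: the paper argues via the fact that $f$ maps $(0,a)$ onto all of $\mathbb{R}$, picking two points $x_1,x_2$ in the same component of the domain with $f(x_1)>f(a)>f(x_2)$ so that the inequality must fail at one of them, whereas you instead use concrete evaluations --- the limit $x\to 0^+$ forcing $f(a)\ge 1$ when $a\in(a_c,2\pi)$, and the sign flip of $g_a$ between $x=\pi$ and $x=2\pi$ when $a>2\pi$ --- together with a direct check of the degenerate parameters $a\in\pi\mathbb{N}$, which the paper dismisses as obvious. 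Both routes rest on the same Lemma~\ref{lem:calc_1}; yours is slightly more hands-on and makes the boundary cases explicit, while the paper's surjectivity argument treats all $a>a_c$ uniformly. Your closing heuristic $g_a(x)\approx\cosh(a)\cosh(x)\sin(a-x)$ for the oscillation count is consistent with the Remark following Theorem~\ref{thm:change_of_rate_oscillation}, though not needed for the lemma itself.
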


 \begin{proof}
  The claim is obvious for $a\in \pi \mathbb{N}$, so we only need to consider $a\in (0,\infty)\setminus \pi \mathbb{N}$.
  We consider the function $f$ as in Lemma~\ref{lem:calc_1}.
  
  \medskip 
  \noindent 
  {\it Case $a\in (0,\pi)$:}\\
  Since $f$ is strictly decreasing on $(0,a)$, we have for all $x\in (0,a)$:
  \begin{eqnarray}
  f(x) & > & f(a)\nonumber \\
  \Rightarrow \quad \frac{\tanh(x)}{\tan(x)} & > & \frac{\tanh(a)}{\tan(a)}\nonumber\\
  \Rightarrow \quad \sinh(x) \cos(x) \cosh(a) \sin(a) & > & \sinh(a) \cos(a )\cosh(x) \sin(x).\label{eq:calc_1}
  \end{eqnarray}
  In the last step we used that $\sin(x)>0$, $\sin(a)>0$.
  
  \medskip 
   \noindent 
  {\it Case $a\in (\pi,a_c]$:}\\
  Here $f(a)\ge f(a_c)=1$. We consider first $x\in (\pi, a)$. Thanks to Lemma~\ref{lem:calc_1} we start with
  $$
  f(x)  >  f(a)\quad \Rightarrow \quad \frac{\tanh(x)}{\tan(x)} >\frac{\tanh(a)}{\tan(a)}. 
  $$
  Since in this case $\sin(x)<0$, $\sin(a)<0$,  we end up again with (\ref{eq:calc_1}).
  For $x\in (0,\pi)$, the starting point is
  $$
  f(x)  <1=f(a_c)\le  f(a)\quad \Rightarrow \quad \frac{\tanh(x)}{\tan(x)} <\frac{\tanh(a)}{\tan(a)}.
  $$
  But since now $\sin(x)>0$, $\sin(a)<0$,  (\ref{eq:calc_1}) follows again. For $x=\pi$, the claim is obvious.
  
    \medskip 
    \noindent 
  {\it Case $a > a_c$:}\\
  According to Lemma~\ref{lem:calc_1} and the definition of $a_c$, $f(0,a)=\mathbb{R}$ so that we always find 
  $x_1,x_2\in (0,a)$ with
  $$
  \frac{\tanh(x_1)}{\tan(x_1)} >\frac{\tanh(a)}{\tan(a)}> \frac{\tanh(x_2)}{\tan(x_2)}
  $$
  and both $x_1, x_2$ in the same interval $(k\pi,(k+1)\pi)$. So  the sign of $\sin(x_1)$ and $\sin(x_2)$ coincides.
  Depending on the sign of $\sin(a)$ one chooses $x_0=x_1$ or $x_0=x_2$ respectively and ends up
  in each case with a point $x_0\in (0,a)$ for which (\ref{eq:calc_1}) is violated, while in the other point (\ref{eq:calc_1}) is satisfied.
 \end{proof}

\end{appendix}

\bigskip\noindent 
{\bf Acknowledgement.} We are grateful to the referee for his or her very careful reading of the manuscript and for his or her very helpful suggestions.
%---------------------------------------------------

\end{document}